\newtheorem{theorem}{Theorem}
\newtheorem{lemma}{Lemma}
\newtheorem{condition}{Condition}
\newtheorem{proposition}{Proposition}
\newtheorem{corollary}{Corollary}
\theoremstyle{definition}
\newtheorem{definition}{Definition}
\theoremstyle{remark}
\newtheorem{remark}{Remark}
\newtheorem*{rep@theorem}{\rep@title}
\newcommand{\newreptheorem}[2]
{\newenvironment{rep#1}[1]
	{\def\rep@title{#2 \ref{##1}} \begin{rep@theorem}}%
		{\end{rep@theorem}}}
\newcommand{\figref}[1]{Figure~\ref{fig:#1}}
\newcommand{\secref}[1]{Section~\ref{sec:#1}}
\newcommand{\appref}[1]{Appendix~\ref{app:#1}}
\newcommand{\defref}[1]{Definition~\ref{def:#1}}
\newcommand{\lemref}[1]{Lemma~\ref{lem:#1}}
\newcommand{\propref}[1]{Proposition~\ref{prop:#1}}
\newcommand{\thmref}[1]{Theorem~\ref{thm:#1}}
\newcommand{\coma}{\textnormal{coma}}
\let\oldnl\nl
\newcommand{\nonl}{\renewcommand{\nl}{\let\nl\oldnl}}
\newcommand{\PP}[1]{\textnormal{Pr}\!\left\{{#1}\right\}} 
\newcommand{\EE}[1]{\mathbb{E}\left[{#1}\right]} 
\newcommand{\EEst}[2]{\mathbb{E}\left[{#1}\ \middle| \ {#2}\right]} 
\newcommand{\PPst}[2]{\text{Pr}\!\left\{{#1}\ \middle| \ {#2}\right\}} 
\newcommand{\ident}{\mathbf{I}}
\newcommand{\ones}{\mathbf{1}}
\def\independenT#1#2{\mathrel{\rlap{$#1#2$}\mkern2mu{#1#2}}}
\newcommand\independent{\protect\mathpalette{\protect\independenT}{\perp}}
\newcommand{\ignore}[1]{}
\let\emptyset\varnothing
\newcommand{\thedate}{\today}
\newcommand{\theauthor}{Jinjin Tian$^{1}$, Xu Chen$^{2}$, Eugene Katsevich$^{3}$, Jelle Goeman$^{2}$, Aaditya Ramdas$^{1}$  \bigskip \\
$^1$ Carnegie Mellon University \\
$^2$ Leiden University Medical Center\\
$^3$ University of Pennsylvania \smallskip \\
\texttt{\{jinjint,aramdas\}@stat.cmu.edu}\\
\texttt{\{X.Chen.MS, J.J.Goeman\}@lumc.nl}\\
\texttt{ekatsevi@wharton.upenn.edu}}
\newcommand{\thetitle}{
Large-scale simultaneous inference under 
dependence
}
\date{\thedate}
\author{\theauthor}
\title{\thetitle}
\newcommand{\ta}[1]{t_{\alpha}(#1)}
\newcommand{\tac}[1]{\overline{t}_{\alpha}(#1)}
\newcommand{\dotfrac}[2]{
	\mathchoice
	{\ooalign{$\genfrac{}{}{0pt}{0}{#1}{#2}$\cr\leavevmode\cleaders\hb@xt@ .22em{\hss $\displaystyle\cdot$\hss}\hfill\kern\z@\cr}}
	{\ooalign{$\genfrac{}{}{0pt}{1}{#1}{#2}$\cr\leavevmode\cleaders\hb@xt@ .22em{\hss $\textstyle\cdot$\hss}\hfill\kern\z@\cr}}
	{\ooalign{$\genfrac{}{}{0pt}{2}{#1}{#2}$\cr\leavevmode\cleaders\hb@xt@ .22em{\hss $\scriptstyle\cdot$\hss}\hfill\kern\z@\cr}}
	{\ooalign{$\genfrac{}{}{0pt}{3}{#1}{#2}$\cr\leavevmode\cleaders\hb@xt@ .22em{\hss $\scriptscriptstyle\cdot$\hss}\hfill\kern\z@\cr}}
}
\tikzstyle{none} = [rectangle, rounded corners, minimum width=0.5cm, minimum height=0.5cm,text centered, draw=black, fill=red!20]
\tikzstyle{noneoff} = [rectangle, rounded corners, minimum width=0.5cm, minimum height=0.5cm,text centered, draw=black, fill=red!8]
\tikzstyle{ada} = [rectangle, rounded corners, minimum width=0.5cm, minimum height=0.5cm, text centered, draw=black, fill=orange!20]
\tikzstyle{dis} = [rectangle, rounded corners, minimum width=0.5cm, minimum height=0.5cm, text centered, draw=black, fill=blue!20]
\tikzstyle{adaoff} = [rectangle, rounded corners, minimum width=0.5cm, minimum height=0.5cm, text centered, draw=black, fill=orange!7]
\tikzstyle{adadis} = [rectangle, rounded corners, minimum width=0.5cm, minimum height=0.5cm, text centered, draw=black, fill={rgb:red,10;green,20;yellow,54}]
\tikzstyle{adadisoff} = [rectangle, rounded corners, minimum width=0.5cm, minimum height=0.5cm, text centered, draw=black, fill=green!5]
\tikzstyle{arrow} = [thick,->,>=stealth, text width=3cm]
\tikzstyle{dotarrow} = [dashed, text width=3cm]
\tikzstyle{title} = [rectangle, rounded corners, minimum width=1cm, minimum height=1cm, text centered, draw=white, fill=none]
\begin{document}
	
\author{\theauthor}

\maketitle

\begin{abstract}
    Simultaneous inference allows for the exploration of data while deciding on criteria for proclaiming discoveries. It was recently proved that all admissible post-hoc inference methods for the true discoveries must employ closed testing. In this paper, we investigate efficient closed testing with local tests of a special form: thresholding a function of sums of test scores for the individual hypotheses. Under this special design, we propose a new statistic that quantifies the cost of multiplicity adjustments, and we develop fast (mostly linear-time) algorithms for post-hoc inference. Paired with recent advances in global null tests based on generalized means, our work instantiates a series of simultaneous inference methods that can handle many dependence structures and signal compositions. We provide guidance on the method choices via theoretical investigation of the conservativeness and sensitivity for different local tests, as well as simulations that find analogous behavior for local tests and full closed testing.
    
    
    \noindent \textbf{Keywords:} Closed testing, multiple testing, simultaneous inference. 
\end{abstract}


\section{Introduction}\label{sec:intro}
In large-scale hypothesis testing problems, choosing the criteria for proclaiming discoveries, or even picking an error metric, can be tricky before researchers look at their data. A much more flexible approach is simultaneous (and thus post-hoc) inference, which allows the researcher to examine the whole data set and compare data-dependent guarantees on any subsets that they like before finally rejecting a set of null hypotheses along with the associated guarantee. Simultaneous inference methods are typically designed to control the false discovery proportion (FDP) for all possible choices of selections simultaneously \citep{goeman2011multiple,blanchard2020post,katsevich2020simultaneous}. It was recently proved that optimal post-hoc methods must be based on closed testing \citep{marcus1976closed,genovese2006exceedance,goeman2019simultaneous}. Nevertheless, one big obstacle that prevents closed testing from being popular in practice is its exponential computation time in the worst case. Further, the complex nature of the closure process makes it hard to theoretically quantify conservativeness and power. 

The key to dealing with these obstacles lies in the building block of closed testing, which is a local test for every subset of hypotheses, that tests for the presence of a signal in at least one of the hypotheses in the subset (in other words, global null testing for each subset of hypotheses). The design of such local tests, i.e., the choice of the global null test to apply, is critical, as its special structure may allow fast (quadratic, linearithmic, or even linear) time shortcuts to be derived; and its robustness to dependence and power under various settings will be largely preserved after closure.

A practical choice for such a local test is a $p$-value combination test, that combines the evidence against the individual hypotheses in the subset into a single test statistic. Formally speaking, consider a set of hypotheses $H_1, \dots, H_m$, each as a collection of probability measures defined on the same space $(\Omega, \mathcal{F})$, where $Q^{\star}$ is the true (unknown) distribution that generates the data. A hypothesis $H_i$ is true if $Q^{\star} \subseteq{H_i}$, and the global null hypothesis is specified by
\begin{equation}
    \bigcap_{i=1}^m H_i := \{\ H_i\ \textnormal{is true, for all } i\in {\color{black}\{1,2,\dots,m\}}\}.
\end{equation}
Assume that we construct some test statistic, or score, $T_i$ which captures evidence refuting $H_i$, and satisfying
\begin{equation}\label{valid}
    \sup_{Q^{\star} \in H_i}  \textnormal{Pr}_{Q^{\star}}\{T_i\leq C_i(x)\} \leq x, \quad \forall x \in [0,1],
\end{equation}
for some corresponding critical value $C_i$. (The scores are high when $H_i$ is true.) One common choice is a $p$-value, where $T_i = P_i$, {\color{black}with $P_i$ being a valid $p$-value for $H_i$}, and $C_i(x) \equiv x$. Then, global null testing can be done in the following way: combine those scores using a function $f$ and find a calibration function $C$ such that 
\begin{equation}\label{gnsum}
    \sup_{Q^{\star} \in \bigcap_{i=1}^m H_i}  \textnormal{Pr}_{Q^{\star}}\{f(T_1, \dots, T_m) \leq C(m,x)\} \leq x, \quad \forall x \in [0,1]
\end{equation}
is true under the assumed dependence structure (if any) among the scores\footnote{Note that generally the functions $f$ and $C$ can also depend on the scores themselves, however in this paper we consider specifically the case when $f$ and $C$ is fixed, and $f$ as function the scores only, and $C$ as function of the cardinality of hypotheses set only. These cases already consist of a large proportion of existed global null tests, and simplify the analysis throughout the paper.}. We call a global null test in the form of \eqref{gnsum} as \emph{monotonic} if $f$ is monotonic in each of its arguments; \emph{symmetric} if $f$ remains unchanged on permuting its arguments. Monotonicity and symmetry are two rather common features of a global null test. Given both monotonicity and symmetry of local tests, quadratic time shortcuts for finding simultaneous FDP confidence bounds (FDP shortcuts) have been developed by \citet{goeman2011multiple} and later a quadratic time variant for simultaneous FWER control (FWER shortcuts) was presented by \citet{dobriban2020fast}.

In this paper, we investigate how inference can benefit from a more specific structure of the local test, that of \emph{separability} (see Appendix~\ref{app:compdef} for formal definitions of the aforementioned terms). In particular, we consider the following special case of \eqref{gnsum}:
\begin{equation}\label{sumform}
    \sup_{Q^{\star} \in \bigcap_{i=1}^m H_i}  \textnormal{Pr}_{Q^{\star}}\{\sum_{i=1}^m h(T_i) \leq C(m,x)\} \leq x, \textnormal{ for all } x \in [0,1],
\end{equation}
where $h$ is a monotonic function of scores. Given the local tests of form \eqref{sumform}, we show that both FDP and FWER shortcuts can be reduced to linear time, after an initial sorting step (\thmref{shortcut},\ref{thm:gammaselection}). 

 Design \eqref{sumform} applies to a majority of existing global null tests, including famous examples like Fisher’s combination test \citep{fisher1992statistical}, Stouffer's combination method \citep{stouffer1949american}, R{\"u}schendorf's results \citep{ruschendorf1982random} about the arithmetic mean of $p$-values; as well as recent advances like the harmonic mean \citep{wilson2019harmonic}, Cauchy \citep{liu2020cauchy} and   L\'evy \citep{wilson2021evy} combinations. A particular work that is closely related to ours is a summary of all the above-mentioned global null tests: the generalized mean based combination methods \citep{vovk2020combining}. The fast shortcuts we developed allow bringing those canonical and new global null tests, to post-hoc large-scale real-world applications. Consequently, we obtain a class of novel methods for simultaneous inference, which we found rich enough to contain powerful solutions that adapt to various dependence assumptions and signal distributions. 
 
 We further study the adaptivity in a subclass of our methods via careful quantification of the balance between conservativeness caused by the need to protect against unknown dependence and test power. Specifically, we calibrate against the intermediate setting of arbitrary Gaussian correlation (rather than the two extremes, independence and arbitrary dependence), and investigate the asymptotic power under our derived calibration. The theoretical findings regarding local tests are then empirically confirmed to be preserved after closure.
 
 One result of independent interest is the following: if $P_1,\dots,P_m$ are one-sided Gaussian $p$-values derived from the coordinates of an arbitrary $m$-dimensional Gaussian, then their arithmetic average $P$ behaves like a $p$-value for small thresholds, satisfying $\text{Pr}(P \leq t) \leq t$ for $t \leq \frac{1}{2m}$.

The paper outline is as follows. In \secref{post-hoc}, we derive linear time algorithms for three kinds of tasks for closed testing using a local test of form \eqref{sumform}: 1) simultaneity assessment (e.g., \ compute the cost of simultaneity for a single subset of hypotheses chosen pre-hoc or post-hoc), 2) simultaneous inference (e.g., \ type-I error bounds and FDP error bounds calculation for a single subset of hypotheses), and 3) automatic post-hoc selection (e.g., \ selection of the largest set of hypotheses with a predefined error level for its post-hoc FDP bound). Then we focus on the multivariate Gaussian setting to formally evaluate a class of local tests satisfying our requirements based on generalized means. Specifically, in \secref{cal}, we derive the asymptotic valid calibrated threshold for positively equicorrelated Gaussians, which allows us to calculate the price paid to protect against different levels of dependence using different combinations choices. Then we calculate closed-form asymptotic power expressions under different signal settings in \secref{pow}, and reason about the sweet spot for different combination methods. Finally, we confirm that our qualitative conclusions about local tests are preserved after closure, using simulations in \secref{exp}. A conclusion including takeaways for practitioners and future directions is provided in \secref{con}.

\section{Simultaneous inference via closed testing}\label{sec:post-hoc}

Recall that we are interested in testing hypotheses $H_1, \dots, H_m$, each represented by a collection of probability measures on the measurable space $(\Omega, \mathcal{F})$, where $Q^\star$ is the true (unknown) measure that generates the data. We call a hypothesis $H_i$ \emph{null} if $Q^\star \in H_i$, and \emph{non-null} otherwise. We denote $H_S := \bigcap_{i \in S}H_i$ as the intersection hypotheses corresponding to index set $S$, which is \emph{null} if and only if $H_i$ is \emph{null} for all $i\in S$. In particular, we let $H_{\emptyset}$ equals the set of all probability measures on $(\Omega, \mathcal{F})$, so the null hypothesis $H_{\emptyset}$  is always true.   Let $\mathcal{H}_0 := \{i: Q^\star \in H_i\}$ denote the (unknown) set of null hypotheses that are true.

 The non-null hypotheses are usually of more interest, often serving as an important reference for variable selection and scientific discovery. Therefore we often call the non-null hypotheses as \emph{signals}. A common goal is to identify a large set of hypotheses that contains mostly signals. In other words, we wish to proclaim a set of ``discoveries'' while controlling the number or fraction of false discoveries (i.e., \ the null hypotheses that were incorrectly proclaimed as discoveries). 

For a set $S\subseteq [m]:=\{1,2,\dots,m\}$ indexing the hypotheses, define its (unknown) number of false and true discoveries as
\begin{equation}\label{ftdpdef}
\epsilon(S) := |S\cap \mathcal{H}_0|,\quad \delta(S) := |S\setminus \mathcal{H}_0|,
\end{equation}
respectively.
We wish to find $t_{\alpha}(S)\in\{0,1\}$ and $e_{\alpha} \in \{0,1,\dots,|S|\}$ such that:
\begin{align}
\label{talphadef}
 \textnormal{Type-I error control:} \quad 
 &\PP{\delta(S) \geq t_{\alpha}(S)} \geq 1-\alpha,\\
\label{ealphadef}
\textnormal{False Discovery Proportion (FDP) control:} \quad  &\PP{\epsilon(S) \leq e_{\alpha}(S)} \geq 1-\alpha, 
\end{align}
where $t_\alpha(S)$ indicates whether we reject $H_S$ or not, and $e_{\alpha}(S)$ provides the upper bound of the number of non-signals in $S$. Specifically, Type-I error control guarantees that, with high probability, $S$ is not rejected if it contains only nulls, while the FDP control guarantees that, with high probability, the number of false discoveries in set $S$ is upper bounded. Naturally, we prefer $t_\alpha(S)$ to be one if possible and $e_\alpha(S)$ to be as small as possible. The slightly odd formalism for~\eqref{talphadef} is simply to draw parallels with the definitions that follow.

To freely examine several arbitrary sets $S$ and then select a set, we need extra corrections to ensure post-hoc validity of error guarantees. In other words, we would need to convert the above high probability guarantees for an individual set $S$ into one for all possible sets simultaneously. Formally, we desire
\begin{equation}\label{tbalphadef}
   \textnormal{Simultaneous Type-I error control:} \quad  \PP{\delta(S) \geq \overline{t}_{\alpha}(S) \textnormal{ for all } S \subseteq{[m]}} \geq 1-\alpha,
\end{equation}
for some $\overline{t}_{\alpha}(S)\in\{0,1\}$ as before, and we would like to design an $\overline{e}_{\alpha}(S) \in \{0,1,\dots,|S|\}$ such that
\begin{equation}\label{ebalphadef}
    \textnormal{Simultaneous FDP control:} \quad \PP{\epsilon(S) \leq \overline{e}_{\alpha}(S) \textnormal{ for all } S \subseteq{[m]}} \geq 1-\alpha.
\end{equation}
 Closed form expressions for $\overline{t}(\cdot)$ and $\overline{e}(\cdot)$ can be derived in special cases~\citep{katsevich2020simultaneous}, but only bounds based on closed testing can be admissible \citep{goeman2019only}. Closed testing was initially proposed by \citet{marcus1976closed}, who suggested using
\begin{equation}\label{tbaralphasol}
\overline{t}_{\alpha}(S) = \ones\{ t_\alpha(J) =1 \textnormal{ for all } J \supseteq{S}\}.
\end{equation}
It was later noticed by \citet{goeman2011multiple} that the same procedure also yields an expression for $\overline{e}_{\alpha}(S)$: 
\begin{equation}\label{fdpbound}
\overline{e}_{\alpha}(S) = \max{\{|I|: I\subseteq{S},\  \overline{t}_{\alpha}(S) {\color{black}= 0} \}},
\end{equation}
which is the size of the largest subset of $S$ that is not rejected by closed testing. In this closed testing framework, $t_{\alpha}$ defined in \eqref{talphadef} is also called as a \emph{local test}, which is just a valid $\alpha$-level test of the composite hypothesis $H_S$, while $\overline{t}_{\alpha}$ is the corresponding post-hoc version. We denote the set of composite hypotheses rejected locally (before closure) as $\mathcal{U}_{\alpha}$, and as $\mathcal{X}_{\alpha}$ after closure, that is
\begin{equation}\label{sets}
\mathcal{U}_{\alpha} = \{S \subseteq{[m]}: t_{\alpha}(S)=1\}, \quad \textnormal{and} \quad \mathcal{X}_{\alpha} = \{S\subseteq{[m]}: \overline{t}_{\alpha}(S)=1\}.
\end{equation}
In this paper, we focus on the case when local test $t_{\alpha}$ is of the following form:
\begin{equation}\label{sepsym}
t_{\alpha}(S) = \ones\left\{ \sum_{i=1}^{|S|} h(T_{i}) \leq C(|S|,\alpha)\right\},
\end{equation}
where $h(\cdot)$ is a monotonically increasing function.

\begin{remark}
In fact, the form \eqref{sepsym} satisfies three common and reasonable designs of global null test, which are \emph{symmetry}, \emph{monotonicity} and \emph{separability}. {\color{black}Specifically: the summation structure corresponds to \emph{separability}; the monotonicity of $h$ corresponds to \emph{monotonicty}; and the index-invariant fact about $h$ corresponds to \emph{symmetry}.} We refer the interested readers to \appref{compdef} for details, and definitions of the aforementioned terms.
\end{remark}


Before we proceed, we introduce a special class of local tests based on generalized means as discussed by \citet{vovk2020combining}, since we will repeatedly use them as motivating examples. Consider the following combinations of $p$-values $p_1, \dots, p_m$, indexed by $r \in [-\infty,\infty]$:
\begin{align}\label{Mrm}
    M_{r}(p_1, \dots, p_m) := 
    \begin{cases}
    \max_{i \in [m]} {p_i} , & \text{if } r = \infty;\\
    (\prod_{i=1}^{m}p_i)^{1/m}, & \text{if } r = 0;\\
    (\frac1m\sum_{i=1}^{m} p_i^r)^{1/r}, &r \in (-\infty,0) \cup (0, \infty);\\
    m\min_{i \in [m]}{p_i} , & \text{if } r = -\infty,\\
    \end{cases}
\end{align}
which corresponds to the arithmetic mean when $r = 1$; geometric mean when $r = 0$; and harmonic mean when $r = -1$. For simplicity, we use $ M_{r,m}$ to stand for $M_{r}(p_1, \dots, p_m)$ throughout the paper. Denote 
\begin{equation}\label{tralpha}
t_{\alpha}^{(r)}(S) := \ones\{M_{r}((p_i)_{i \in S}) \leq c_r(|S|,\alpha) \},
\end{equation}
where $c_{r}(|S|,\alpha)$ is a critical value that depends only on $|S|$, $\alpha$ for different $r$. {\color{black}Then according to \citet{vovk2020combining}},  $t_{\alpha}^{(r)}(S)$ is a valid local test, and the corresponding class
\begin{equation}\label{Talpha}
\mathcal{T}_{\alpha} := \{t_{\alpha}^{(r)}: r \in [-\infty, \infty]\}    
\end{equation}
is rich enough to contain many famous local test choices like the Bonferroni ($r=-\infty$) method, the Fisher's combination ($r=0$), and the recent harmonic mean combination method ($r = -1$); and its members also have simple enough structure such that we can summarize their nature with a univariate parameter $r$. 

\subsection{The cost of multiplicity adjustment arising from post-hoc inference}\label{sec:coma}
In practice, one may be concerned that simultaneity has a large statistical cost (paid in power). To address this concern, we propose a novel statistic called \emph{coma}, which stands for the \emph{\underline{CO}st of \underline{M}ultiplicity \underline{A}djustment} arising from requiring valid post-hoc inference. The statistic is invariant to the testing level $\alpha$, and only costs linear time to compute. 


To construct \emph{coma} such that it is invariant to test level $\alpha$, we intentionally use the adjusted $p$-value, which is defined as the smallest $\alpha$ under which the test would be rejected. Formally, for a certain set $S$ among a series of hypotheses $H_1, \dots, H_m$, denote the adjusted $p$-value based on $S$ using local testing rule $t_{\alpha}$ as
\[
p(S) := \inf\{\alpha \in [0,1]: t_{\alpha}(S) = 1\},
\]
and the adjusted $p$-value for $S$ after going through the closed testing procedure as
\begin{equation}\label{pcdef}
    \overline{p}(S) := \inf{\{\alpha \in [0,1]: \tac{S} = 1\}}.
\end{equation}
Then \emph{coma} is defined as follows.

\begin{definition}[cost of multiplicity adjustment]
For any $S \subseteq{[m]}$,  define 
\begin{equation}
   \coma(S) := \overline{p}(S)/p(S) 
\end{equation} 
as the cost of multiplicity adjustment when testing $H_S$.
\end{definition}

Note that  $\coma(S)$ is a data-dependent quantity that depends on the choice of local test. As for a quick example, $\coma(S) = \frac{m}{|S|}$ if $t_{\alpha}$ is Bonferroni and $p(S)$ is small enough. This example concurs with the intuition that the cost of multiplicity grows with the total dimension $m$; however, it decreases with the subset dimension $|S|$. The following result presents a more general expression for \emph{coma}.

\begin{theorem}\label{thm:adjustedp}
For any $S\subseteq{[m]}$, if the local test is of form \eqref{sepsym}, then we have a linear time expression
    \begin{equation}\label{pclinear}
     \overline{p}(S) = \max_{0 \leq i \leq |S^c|} p(S \cup J^\star_i),
     \end{equation}
     where $J^\star_i$ is the set of indices of hypotheses associated with the $i$ largest $p$-values in $S^c$.     
\end{theorem}


\begin{figure}[H]
    \centering
	\includegraphics[width=0.7\textwidth]{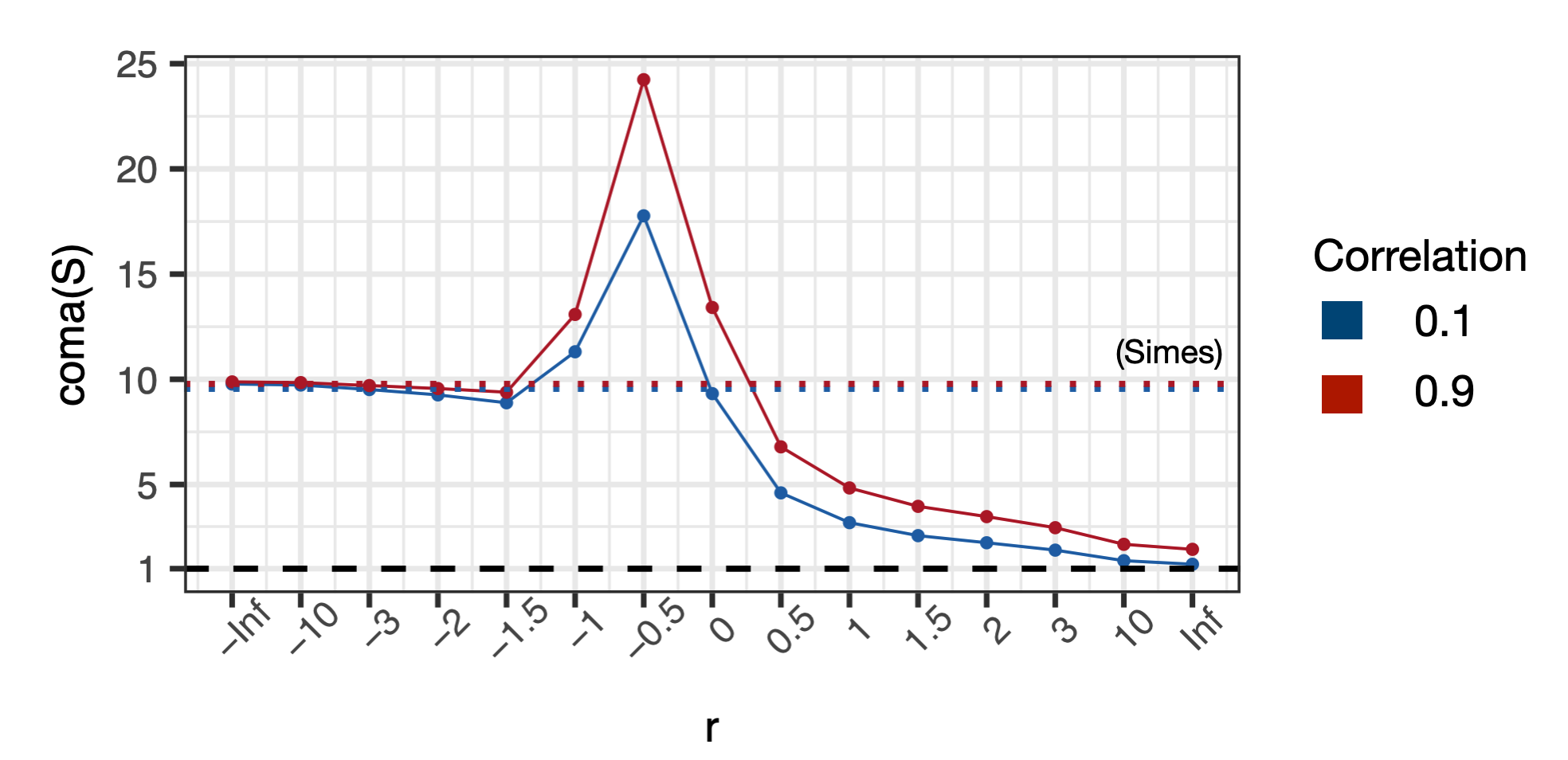}
	\caption{$\coma(S)$ versus different local test procedures under different extend of dependency. {\color{black} The dashed horizontal lines represents the value of $\coma(S)$ with the local test as Simes, whereas the solid lines plot the value of $\coma(S)$ with the local test as $t_{\alpha}^{(r)}$ versus different $r$. When $r=-\infty$ (written as $-\text{Inf}$),  $t_{\alpha}^{(r)}$ recovers Bonferroni.} We simulate the data to follow equicorrelated Gaussian, where we set total number of hypotheses $m = 200$, and size of set $S$ as 20. We set signal proportion outside $S$ as $0.3$, signal proportion inside $S$ as $0.7$ with signal strength (i.e. the mean of Gaussian) $\mu = 2$. The results are averaged over $5\times 10^3$ trials.  }\label{fig:comar}
\end{figure}
\begin{figure}[H]
    \centering
	\includegraphics[width=\textwidth]{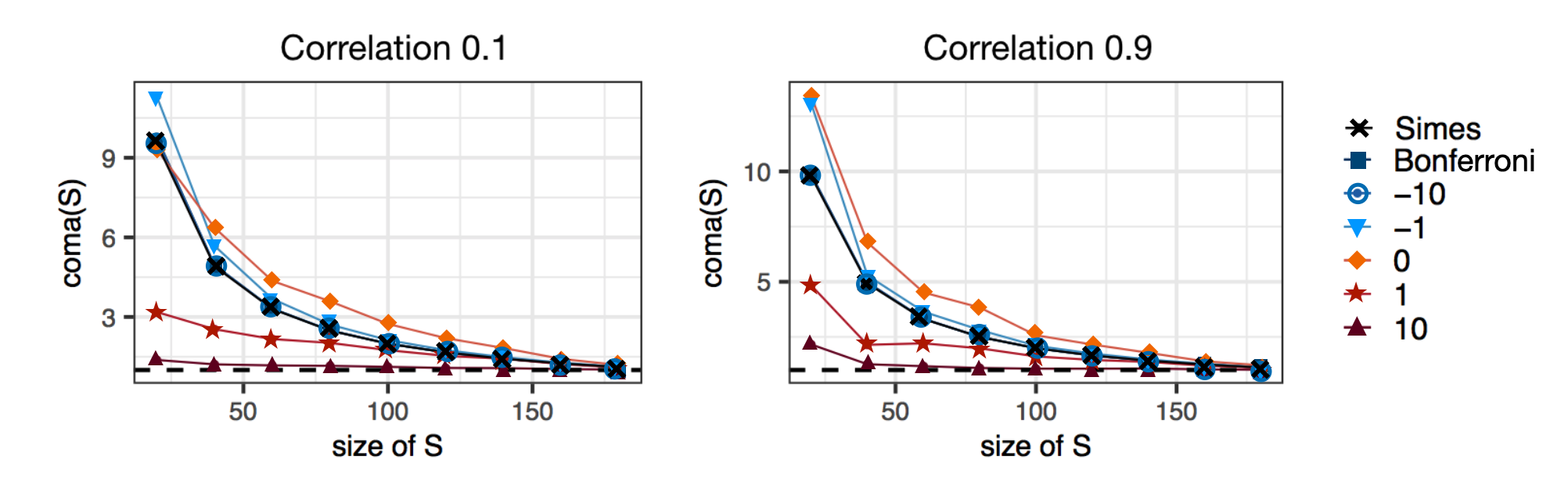}
	\caption{$\coma(S)$ versus the size of $S$ using different local test procedures under  We simulate the data to follow equicorrelated Gaussian, where we set total number of hypotheses $m = 200$, and size of set $S$ as 20. We set signal proportion outside $S$ as $0.3$, signal proportion inside $S$ as $0.7$ with signal strength (i.e. the mean of Gaussian) $\mu = 2$. The results are averaged over $5\times 10^3$ trials. {\color{black}We can see that, the lines for Simes, Bonferroni and $r=-10$ almost overlap with each other, while the line for $r=-1$ is slightly higher (and the line for $r=0$ is on the top). These observations are consistent with results in \figref{comar}.}}\label{fig:comasize}
\end{figure}
\thmref{adjustedp} is proved in \appref{pfadjustedp}. 
In the following, we examine how \emph{\coma} varies with the size of $S$ for local tests based on generalized means $\mathcal{T}_{\alpha}$ in \eqref{Talpha}, using calibration derived by \citet{vovk2020combining} under arbitrary dependence.  
\figref{comar} plots \emph{\coma} versus the choice of local test for a set $S$ of size $20$ out of $200$ hypotheses in total, using equicorrelated Gaussian data. We can see that \emph{\coma} with local test $t_{\alpha}^{(r)}$ with positive $r$ is generally smaller than that with negative $r$, while the order statistics based procedures, Simes and Bonferroni, behave similarly. This indicates one would prefer to use $t_{\alpha}^{(r)}$ with positive $r$ if one does not want too different results on changing from pre-hoc to post-hoc. On the other hand, \figref{comasize} plots  \emph{\coma} versus the size of the target set $S$ (with the total number of hypotheses remaining as $200$). Except for the consistent observation that positive $r$ have lower \emph{\coma}, we can additionally see that $\coma(S)$ generally decreases with the size of $S$, which agrees with our intuition that lower resolution post-hoc inference should cost less. 
\subsection{Fast shortcuts for post-hoc inference and selection}\label{sec:shortcutslinear}
Another practical concern with regard to imposing simultaneity is the heavy computation time, which is exponential in $m$ in general. In this section, we present fast (linear time shortcuts for calculating both $\overline{t}_{\alpha}$ and $\overline{e}_{\alpha}$, for local tests of form \eqref{sepsym}. 

\begin{theorem}\label{thm:shortcut}
Consider testing $m$ hypotheses with presorted scores post-hoc via closed testing using local test $t_{\alpha}$ of form \eqref{sepsym}. For a set $S\subseteq{[m]}$,  Algorithm~\ref{shortcuts-fdp} returns the simultaneous FDP bound $\overline{e}_{\alpha}(S)$ in  \eqref{fdpbound}, with at most $O(m)$ computation. 
     

\end{theorem}
Note that we sort the scores in ascending order in Algorithm~\ref{shortcuts-fdp} in order to have easier tracking of indices since the algorithm is a step-down procedure. The proof for \thmref{shortcut} is in \appref{shortcutpf}. 


\begin{remark}
Note that local test $t_{\alpha}^{(r)}$ does not admit form \eqref{sepsym} when $r = \pm \infty$, therefore the shortcut in \thmref{shortcut} for evaluating corresponding $\overline{e}_{\alpha}^{(r)}$ is not applicable. However, they lead to \emph{consonant}\footnote{A closed testing is \emph{consonant} if the local tests for every composite hypothesis $S\in 2^{[m]}$ are chosen in such a way that rejection of $S$ after closure implies a rejection of at least one of its elementary hypothesis after closure.} closed testing as proved by \lemref{closedconsonant} in \appref{closedconso}, and one interesting fact pointed out by ~\citet{goeman2011multiple} is that if for consonant closed testing, the simultaneous FDP bound for a given set reduces to finding the number of its elementary hypotheses that the closed testing cannot reject, therefore reducing to identifying the set of elementary hypotheses being rejected after closure. For $r = -\infty$, this is just Holm's method, while for $r = \infty$, this is just checking whether we can reject the largest $p$-value to decide either to reject all or nothing. 
\end{remark}



\begin{algorithm}[h!]
	\SetAlgoLined
	\KwIn{A sequence of sorted scores $T_1, \dots, T_m$ which satisfies $T_{1}\geq\dots\geq T_{m}$;
	a local test rule of form \eqref{sepsym} with a monotonically increasing transformation function $h$ and thresholding function ${\color{black}C}$;
	confidence level $\alpha$;
	candidate rejection set $S = \{i_1, i_2, \dots, i_s\}$ and its complement $S^{c} = \{j_1, j_2, \dots, j_{m-s}\}$ with \smash{$i_1< i_2 < \dots i_s,\quad j_1< j_2 < \dots j_{m-s}$.}}
	\KwOut{High probability ($1-\alpha$) simultaneous bound $\overline{e}_{\alpha}(S)$ on the number of false discoveries in $S$. }
	{\textbf{Initialization:}\\
	\nonl transformed candidate set scores: $u_{1}, \dots,u_{s}, \textnormal{where}\  u_{d} = h(T_{i_d})\  \textnormal{for}\ 1\leq d\leq s$;\\
	\nonl transformed complementary set scores: $v_{1},\dots,v_{m-s},\ \textnormal{where}\  v_{d} = h(T_{j_d})\  \textnormal{for}\ 1\leq d\leq m-s$;\\
	\nonl ill-defined transformed scores: $v_0 = \max(u_1, v_1); \quad {\color{black}v_{m-s+t}, u_{s+t} \equiv \min(u_s, v_{m-s}) - 1,\ \forall 1\leq t\leq m-s}$;\\
	\nonl iteration related indices $k \leftarrow 1; \quad b \leftarrow -1;$\\
	\nonl accumulated scores $Q =0$.}
	
	\For{{\color{black}$a = 1, \dots, m$}}{
		\eIf{$u_{k+b+1} \geq v_{a-k-b}$ \textbf{or} $a=1$}{
			$Q = Q + u_{k+b+1}$
			
			$b = b+1$
		}{
			$Q = Q + v_{a-k-b}$
		}
		\While{$k\leq \min(s,a)$ \textbf{and} $Q > {\color{black}C}(a,\alpha)$}{ 
		\eIf{$b>0$}{
		$b \leftarrow b-1$}{
		$Q \leftarrow Q + u_{k+1} - v_{a-k}$}
		
		$k \leftarrow k+1$
	}
	}
	\Return{$k-1$} 
	\caption{Shortcut for evaluating post-hoc false discoveries bound $\overline{e}_{\alpha}(S)$}\label{shortcuts-fdp}
\end{algorithm}

We have presented procedures for fast inference on a single set $S$ picked freely by users, which in turn, enables effective post-hoc selection among multiple sets of interest: linear and quadratic shortcuts for automatic selection of the largest set $S$ with a prespecified bound $\overline{e}_\alpha$ can also be developed. For users who have no idea of which candidate set to evaluate, \thmref{gammaselection} allows them for efficient automatic selection among a sequence of incremental sets: finding the largest one among them with FDP bounded by $\gamma \in [0,1)$. 
    

\begin{theorem}\label{thm:gammaselection}
    Consider testing $m$ hypotheses post-hoc via closed testing at level $\alpha$, and a series of  incremental candidate sets  to reject: $S_1\subset S_2  \dots \subset S_n \subseteq [m]$ with $|S_i|=i$ for all $i \in [n]$. Then we have:
    \begin{itemize}
        \item [(a)] Given any desired FDP bound $\gamma \in [0,1)$, Algorithm 3 returns the largest set $S_k$ such that $\overline{e}_{\alpha}(S_k) \leq \gamma|S_k|$. 
    \end{itemize}
    If we additionally require local test to be of form \eqref{sepsym}, then 
    \begin{enumerate}
        \item[(b)] Algorithm 3 costs at most {\color{black}$O(mn)$} computation; 
        \item[(c)] Algorithm 3 reduces to Algorithm 2 if $\gamma \equiv 0$ and $S_k$ is the indexes of hypotheses with $k$ smallest scores, which cost at most $O(m)$ computation with presorted scores.
    \end{enumerate}
\end{theorem}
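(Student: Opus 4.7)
The plan is to split the argument into three pieces: correctness of the search logic, the quadratic bound in part (a), and the linear-time reduction in part (b). Throughout I treat Algorithm~\ref{shortcuts-fdp} (whose correctness and $O(m)$ cost are guaranteed by \thmref{shortcut}(b)) as a black-box subroutine.

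\textbf{Correctness and part (a).} Since the candidate sequence is nested $S_1\subset\cdots\subset S_n$, Algorithm~\ref{shortcuts-gamma} reduces to evaluating the deterministic predicate $\overline{e}_\alpha(S_i)\leq\gamma i$ for each $i\in[n]$ and returning the largest passing index. Each evaluation is one call to Algorithm~\ref{shortcuts-fdp}, costing $O(m)$; since $n\leq m$, the total cost is $O(nm)=O(m^2)$. I would also explain why one generally cannot short-circuit the sweep: the closure expression \eqref{tbaralphasol} makes $S\mapsto \overline{e}_\alpha(S)$ monotone in set inclusion, so $\overline{e}_\alpha(S_k)$ is nondecreasing in $k$, but the threshold $\gamma k$ also grows linearly, hence the feasibility set $\{k:\overline{e}_\alpha(S_k)\leq \gamma k\}$ need not be an interval and a binary search is unavailable.

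\textbf{Part (b): reduction to $O(m)$.} When $\gamma=0$, the condition $\overline{e}_\alpha(S_k)\leq 0$ reduces via \eqref{fdpbound} to: every non-empty $I\subseteq S_k$ satisfies $\overline{t}_\alpha(I)=1$. Because $\overline{t}_\alpha(S)$ is the product of $t_\alpha(J)$ over all $J\supseteq S$, the map $S\mapsto \overline{t}_\alpha(S)$ is monotone in set inclusion, so the condition collapses to: every singleton $\{i\}\subseteq S_k$ is rejected after closure. When $S_k$ consists of the $k$ indices with the smallest scores, \condref{mono} further implies that the set of singletons rejected after closure is itself a prefix of the score-sorted list. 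Hence the largest $k$ we seek equals the length of this prefix, so a single sweep over the sorted scores suffices; by \condref{sep} each closure check reduces to comparing a running sum $\sum_j h(T_j)$ against $g(\cdot,\alpha)$, which can be maintained with a single addition per step. I would argue this is exactly what Algorithm~\ref{shortcuts} does, yielding $O(m)$ after the initial sort.

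\textbf{Main obstacle.} The delicate step is verifying that Algorithm~\ref{shortcuts} keeps the correct running-sum state so that the closure predicate $\overline{t}_\alpha(\{i\})=1$ can be checked in amortized $O(1)$ per index, rather than re-invoking the formula in \thmref{shortcut}(a) from scratch. The key lever is that, for the top-$k$ family, the sets $I_i=S_k\cup J_i^\star$ from \thmref{shortcut}(a) form a totally ordered family of prefixes of the globally sorted score list as both $k$ and $i$ vary; together with separability and monotonicity this converts the nested double loop used in part (a) into a single monotone sweep. I would close by exhibiting the invariant: at iteration $j$ of Algorithm~\ref{shortcuts}, the maintained state encodes the rejection status of the singleton of the $j$-th smallest score, and this invariant is preserved under the constant-time update.
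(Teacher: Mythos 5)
Your cost accounting for part (a) and your outline of part (b) are broadly in line with the paper, but there is a genuine gap in the correctness argument for general $\gamma$: you describe Algorithm~\ref{shortcuts-gamma} as an exhaustive sweep over all $i\in[n]$ and even argue that ``one generally cannot short-circuit the sweep.'' That is not what the algorithm does. It is a step-down search which, upon finding $\overline{e}_{\alpha}(S_k) > \gamma k$, jumps directly to $k \gets \lfloor (k-\overline{e}_{\alpha}(S_k))/(1-\gamma)\rfloor$, skipping all intermediate indices in a batch. The substance of the theorem's first claim is precisely that no feasible index is ever skipped, and your proposal contains no argument for this (indeed it asserts that such short-circuiting is impossible). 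The missing ingredient is the monotonicity under set inclusion of the certified true-discovery count $d(S) := |S| - \overline{e}_{\alpha}(S)$ (Lemma 3 of \citet{goeman2019only}; it also follows from \eqref{tbaralphasol}--\eqref{fdpbound} by intersecting the maximizing witness for $S_i$ with $S_j$ and using that $\overline{t}_{\alpha}$ is nondecreasing under inclusion). Writing $d_i = i - \overline{e}_{\alpha}(S_i)$, for any $j<i$ with $j > d_i/(1-\gamma)$ one gets $d_j/j \le d_i/j < 1-\gamma$, i.e.\ $\overline{e}_{\alpha}(S_j)/j > \gamma$, so every skipped index is infeasible and the returned index is indeed the largest feasible one. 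The monotonicity you do invoke --- that $\overline{e}_{\alpha}(S_k)$ is nondecreasing in $k$ --- is correct but is the wrong lever: the feasibility set is indeed not an interval, yet the batch skip is still safe because it is calibrated by $d$, not by $\overline{e}_{\alpha}$.

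Two smaller points. First, the paper proves correctness of Algorithm~\ref{shortcuts-gamma} without any assumption on the local test; monotonicity, symmetry and separability enter only through the cost bounds (each call to Algorithm~\ref{shortcuts-fdp} being $O(m)$, with at most $m$ iterations since $k$ strictly decreases), so your framing slightly conflates validity with complexity. Second, your part (b) follows the paper's route --- collapse $\overline{e}_{\alpha}(S_k)=0$ to rejection of every singleton after closure, use monotonicity to reduce the problem to locating a cutoff in the sorted scores, and use separability to maintain running sums --- but you explicitly defer the verification of Algorithm~\ref{shortcuts}'s invariant as the ``main obstacle.'' The paper discharges this by checking that the algorithm increments $k$ exactly until the family of inequalities $\sum_{i=1}^{s}h(T_i)\le g(s,\alpha)$ for $s\ge k$ and $h(T_k)+\sum_{i=1}^{s-1}h(T_i)\le g(s,\alpha)$ for $s<k$ is first satisfied; that step still needs to be written out for your proof to be complete.
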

The validity of Algorithm 3 in \thmref{gammaselection} does not require any assumption on local test or presorting $p$-values, and needs $m$ iterations in the worst case. In practice we expect fewer iterations will be needed as the false discoveries are ruled out in batches quickly.
Particularly, for the special case stated in part (c) in \thmref{gammaselection}, the task costs only at most linear time. The proof of \thmref{gammaselection} is in \appref{pfgammaselection}. 

\begin{remark}
Algorithm 2 in \thmref{gammaselection} is also the shortcut for finding the largest hypotheses set to reject with strong FWER control among all $m$ hypotheses.
\end{remark}
 
\section{Calibration of local tests for multivariate Gaussians}\label{sec:calibration}
The performance of closed testing based post-hoc inference largely depends on the building blocks--- local tests. Therefore, in order to provide better guidance of applying our newly derived shortcuts introduced in \secref{post-hoc}, we look into the properties of different global null tests, particularly the generalized mean based ones (i.e., $t_{\alpha}^{(r)}$ defined in \eqref{tralpha}) since our shortcuts apply to these.
\citet{vovk2020combining} first summarized the class of generalized mean based combination methods, and derived closed form calibration under arbitrary dependence for different combination choice, using results based on robust risk aggregation. 
Now we specifically summarize the results for calibrating under arbitrary dependence \citep{vovk2020combining} in the following \lemref{valid}, as it will be our benchmark to compare with. 
\begin{remark}
Though a refined version of \lemref{valid} (which gives best possible calibration) can be found in Proposition 8.1 \citep{vovk2022admissible}, it does not admit closed-form expression as \lemref{valid} does. Therefore we adopt \lemref{valid} throughout the paper for simpler theoretical analysis. 
\end{remark}

\begin{lemma}\label{lem:valid} (\citet{vovk2020combining})
{\color{black}For $m$ hypotheses, $\alpha / \alpha_{r,m}$ is a valid critical value for the global null test $t_{\alpha}^{(r)}$ defined in \eqref{tralpha}}, where
\begin{align}\label{alphark}
    \alpha_{r,m} := 
    \begin{cases}
    (r+1)^{1/r}, & \text{if } r \in (-1, \infty];\\
    ( (y_m +m)^2/(y_m+1))\ones\{m\geq 3\} + m\ones\{m \leq 2\}, & \text{if } r = -1;\\
    \frac{r}{r+1}m^{1+1/r}, &r \in [-\infty,-1), 
    \end{cases}
\end{align}
and $y_m$ is the unique strictly positive solution of $y^2 = m\left((y+1)\log{(y+1)}-y\right)$. Particularly, for \smash{$r \in \{-\infty, 0, \infty\}$}, we define $\alpha_{r,m}$ as $\lim_{r\to \infty} (r+1)^{1/r} = 1$, $\lim_{r\to 0} (r+1)^{1/r} = e$, and \smash{$\lim_{r\to -\infty} \frac{r}{r+1}m^{1+1/r} = m$}.
\end{lemma}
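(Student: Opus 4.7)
The plan is to partition the claim into three regimes according to the value of $r$, and handle each by a different style of argument. The regimes are (i) $r \in (-1,\infty]$, (ii) $r = -1$, and (iii) $r \in [-\infty,-1)$, with the limit cases $r \in \{-\infty, 0, \infty\}$ handled separately by continuity / direct inspection. For $r \neq -1$, each regime essentially records a theorem of \citet{vovk2020combining} on $p$-value combination under arbitrary dependence, obtained from robust risk aggregation. The only genuinely new piece is the branch $r = -1$, $m \leq 2$, which is supplied by our \lemref{validm2}. So the job is to assemble these pieces, verify the boundary values $\alpha_{r,m}$ agree across cases, and check the three limiting definitions of $\alpha_{r,m}$.

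For regime (i), $r \in (-1,\infty]$: the central identity is that if $P$ is a valid $p$-value and $r > -1$, then $\mathbb{E}[P^r]$ is finite and is maximized at $\mathbb{E}[P^r] = 1/(r+1)$ by the uniform marginal (via integration by parts on $\Pr(P \leq u) \leq u$). Writing $M_{r,m}^r = \frac{1}{m}\sum_i P_i^r$, a Markov-style inequality applied to this power sum gives the bound $\Pr(M_{r,m} \leq t) \leq 1/((r+1)t^r)$ for $r < 0$, and an analogous statement after a sign-flip for $r > 0$ using that $1 - P_i^r$ has bounded expectation. To convert this $t^r$-shape into the linear calibration $\alpha_{r,m}t$ required to exhibit a valid $p$-value, one uses the sharp Makarov/R\"uschendorf-type bound on sums of bounded random variables under arbitrary coupling, which is the core of \citet{vovk2020combining}'s argument; this is what makes the constant $(r+1)^{1/r}$ both correct and sharp. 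I would then check the boundary value $r = \infty$ by monotone limit: $\alpha_{\infty,m} = 1$ corresponds to the elementary bound $\Pr(\max_i P_i \leq t) \leq \Pr(P_1 \leq t) \leq t$. The $r = 0$ boundary is handled by the continuity convention $\lim_{r \to 0}(r+1)^{1/r} = e$, recovering the known bound $e \cdot (\prod_i P_i)^{1/m}$ is a valid $p$-value under arbitrary dependence.

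For regime (ii), $r = -1$: this is the main obstacle and is where the proof is most delicate, because $\mathbb{E}[P^{-1}]$ diverges for the uniform marginal so the moment method from regime (i) breaks down. For $m \geq 3$ the constant $(y_m + 1)/(y_m + m)^2$ arises from an extremal copula analysis: one sets up the worst-case distribution for $\Pr(M_{-1,m} \leq t)$ among all couplings with uniform marginals, reduces to a one-parameter optimization whose first-order condition is exactly $y^2 = m((y+1)\log(y+1) - y)$, and reads off the calibration from the optimizer $y_m$. This is the content of \citet{vovk2020combining}'s harmonic-mean result, which I would cite. The remaining two sub-cases $m = 1$ (trivial) and $m = 2$ are exactly the statement of \lemref{validm2}, so the branch $\alpha_{-1,m} = m$ for $m \leq 2$ is immediate from that lemma.

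For regime (iii), $r \in [-\infty,-1)$, the same moment obstruction appears but can be tamed. One approach (used by \citet{vovk2020combining}) is to decompose $M_{r,m}$ by separating the effect of the smallest $P_i$: since $\min_i P_i \leq M_{r,m}$ for $r < 0$, a one-sided union bound handles small thresholds, while a Markov-type argument on the truncated moments handles the rest; the two regimes glue at the optimal truncation level, producing the constant $\frac{r}{r+1}m^{1+1/r}$. The $r = -\infty$ boundary is then the continuity convention $\lim_{r\to -\infty}\frac{r}{r+1}m^{1+1/r} = m$, which agrees with Bonferroni ($m \min_i P_i$ is valid). The final bookkeeping step is to verify that $\alpha_{-1,m}$ as given (both the $m \geq 3$ and $m \leq 2$ formulas) fits continuously between regimes (i) and (iii) at $r = -1$, closing the three-regime case analysis.
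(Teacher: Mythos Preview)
Your proposal is correct and matches the paper's treatment: the paper does not prove this lemma but states it as a summary of results from \citet{vovk2020combining}, with the single new ingredient being the $r=-1$, $m\leq 2$ case supplied by \lemref{validm2}, exactly as you identify. Your additional sketches of the robust-risk-aggregation arguments go beyond what the paper provides, but since you ultimately defer to the citation this is harmless.
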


Follow-up work \citep{wilson2020generalized, chen2020trade} explored the conservativeness of such calibration under some special dependence structures: \citet{wilson2019harmonic} derived asymptotic valid (in the sense of $m\to \infty$) calibration under independence using generalized central limit theorem, and empirically studied their performance when the independence assumption is broken; \citet{chen2020trade} compared the generalized mean based combination with order statistics based combination, and proved that only Cauchy combination (and its analog harmonic mean) and Simes combination pay no price for calibration to achieve validity under assumptions from independence to full dependence (i.e. correlation one); \citet{vesely2021permutation} studied the special case that permutation tests can be used. \figref{summary} summarizes all the cases (including ours) where theoretically valid calibration has been derived. Note that, before our work, almost no results have derived in cases other than the two extremes---the independence case and the arbitrary dependence case: \citet{chen2020trade} provided some theoretical justification in the pairwise Gaussian scenario but only for harmonic mean. As for common intermediate dependence structures like multivariate Gaussian case, most work only explored experimentally. Therefore, as shown in \figref{summary}, we work towards filling in the gap by deriving calibration under one of the intermediate cases, the equicorrelated Gaussian setting, which contains both two extremes as well as different dependence levels. Later, we also investigate the performance of our calibration by analyzing the asymptotic type-I error and power under different settings. Particularly, our calibration recovers existing work in scenarios where independence provably has the highest inflated type-I error to be calibrated among others. At the same time, our theoretical performance investigation justifies the interesting behaviors noticed in early experimental studies \citep{wilson2019harmonic, chen2020trade}, that is, for the generalized mean based methods,  choice of positive $r$ performs better under heavy dependence and calibrating under independence gives a high false-positive rate overall. In contrast, the choice of negative $r$ performs poorly under heavy dependence, and calibrating under independence gives a low false-positive rate overall.

\begin{figure}[h!]
    \centering
	\includegraphics[width=\textwidth]{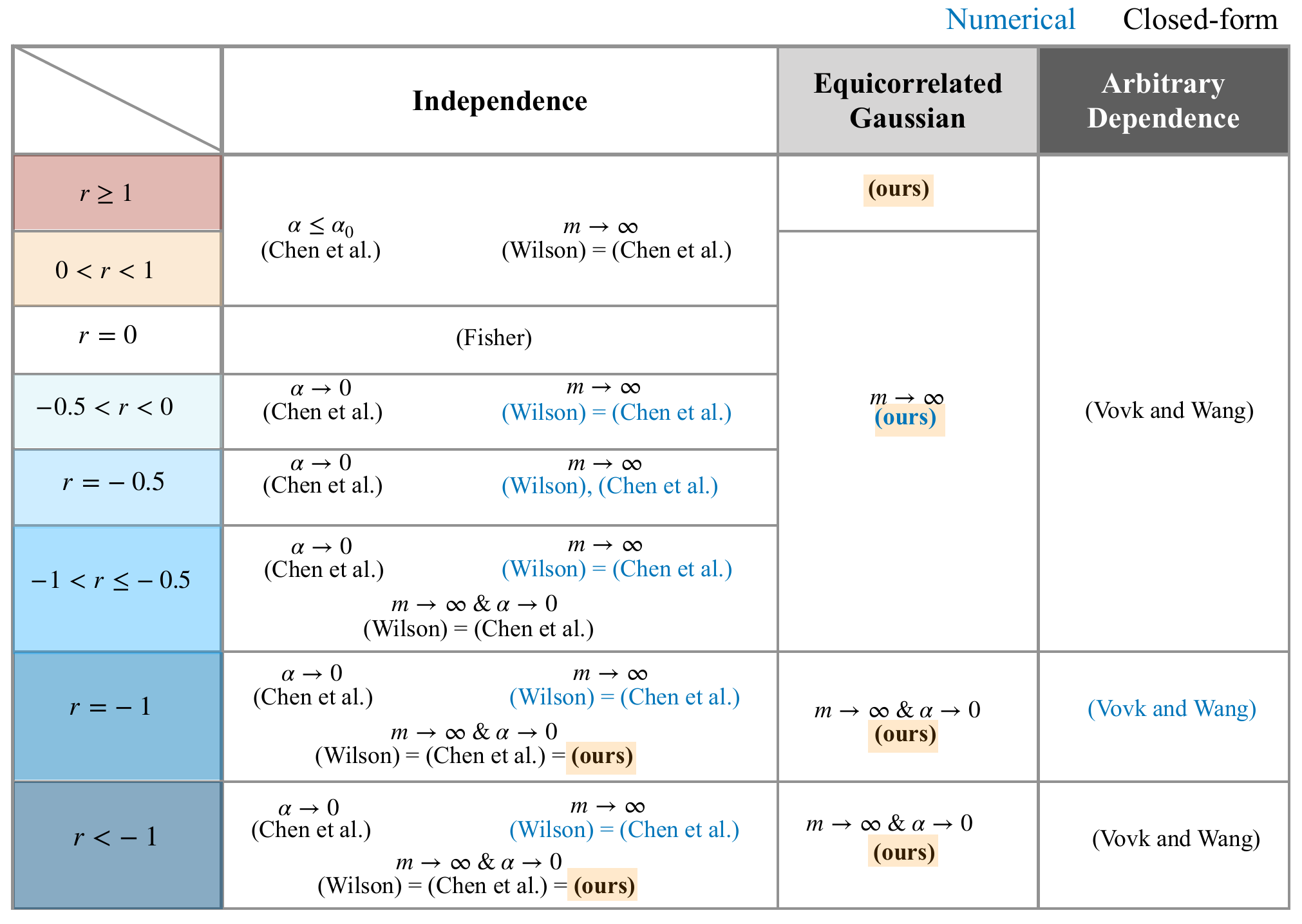}
	\caption{Summary of regimes in which we know how to calibrate generalized means of $p$-values. We omit explicit expressions as there is sometimes no analytical formula, but thresholds can be calculated numerically (blue text). We refer readers to the corresponding references mentioned in the text for explicit expressions.}\label{fig:summary}
\end{figure}

\subsection{Model set-up}
Before presenting the main results, we first motivate our choice of the equicorrelated Gaussian model. Consider a Gaussian sequence model for the observations:
\begin{equation}\label{model}
    (X_{m1}, X_{m2}, \dots, \dots X_{mm}) \sim N_m(\bm{\mu}_m, \Sigma_m),
\end{equation}
where $\bm{\mu}_m = (\mu_{m1}, \dots, \mu_{mm})$, and each entry $\mu_{mi} \stackrel{iid}{\sim}\mu_m B_m$, with $\mu_m>0$ as a scalar, and $B_m$ as a Bernoulli random variable with parameter $\pi_m$. Additionally, we assume $\Sigma_m \in \mathcal{M}_m$,
where $\mathcal{M}_m$ is the set of all $m\times m$ positive semidefinite correlation matrices. {\color{black}We denote the $(i,j)$-th entry of $\Sigma_m$ as $\rho_{ij}$.} Additionally, we denote the set of all equicorrelation matrices as $\mathcal{M}^E_m$, which is the subset of $\mathcal{M}_m$ with all equal non-diagonal elements.

Suppose we are testing the global null hypothesis. 
\[
\bigcap_{i=1}^{m}H_{mi} := \{\mu_{mi} = 0,\ \forall\  i\}
\]
at level $\alpha$. We consider a one-sided $p$-value $p_{mi} = \Phi(-X_{mi})$ for each elementary hypothesis (where $\Phi$ is the CDF of a standard normal), and combine them using a generalized mean  $\overline{t}^{(r)}_{\alpha}$ \eqref{tralpha}. 
Denote the corresponding type-I error given the correlation matrix $\Sigma$ with respect to different $r$ as follows:
\begin{align}\label{Hrhom}
\widetilde{\alpha}_m(\Sigma, r, c) := 
 \text{Pr}_{\bigcap_{i=1}^{m}H_{mi}}\left\{ \left(\frac1m \sum_{i = 1}^m p_{mi}^r\right)^{1/r} \leq  c \right\},
\end{align}
where $\text{Pr}_{\bigcap_{i=1}^{m}H_{mi}}:= \sup_{Q^\star \in \bigcap_{i=1}^{m}H_{mi}}
 \text{Pr}_{Q^\star}$, and 
 $c$ is a correction/calibration threshold to account for dependence, which could be an absolute constant or potentially depends on $r,m$, and $\alpha$. 
\begin{remark}
From the monotonicity of the generalized mean with respect to $r$, one can easily verify by contradiction that $\widetilde{\alpha}_m(\Sigma, r, c)\leq\alpha$ implies $c \leq \alpha$.
\end{remark}


\begin{proposition}\label{prop:worstcorr}
	Fix any $m \geq 1$ and any $r\geq 1$. If $c < \frac{1}{2 m}$, then 
	\begin{equation}
	  \sup_{\Sigma \in \mathcal{M}_m} \widetilde{\alpha}_m(\Sigma, r, c) =\sup_{\Sigma \in \mathcal{M}^E_m} \widetilde{\alpha}_m(\Sigma, r, c)=\widetilde{\alpha}_m(\ones_m \ones_m^T, r, c),
	\end{equation}
where $\ones_m$ is the $m$-dimensional vector of all ones.
\end{proposition}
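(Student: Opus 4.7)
The plan is to reduce the general $r \geq 1$ case to the $r = 1$ arithmetic-mean case, for which the paper's abstract supplies precisely the bound that the hypothesis $c < \tfrac{1}{2m}$ is calibrated to. First I would invoke the power mean inequality: for any $p_1, \dots, p_m \in [0,1]$ and any $r \geq 1$,
\[
\left(\frac{1}{m}\sum_{i=1}^{m} p_i^r\right)^{1/r} \;\geq\; \frac{1}{m}\sum_{i=1}^m p_i,
\]
i.e.\ $M_{r,m} \geq M_{1,m}$. This gives the event inclusion $\{M_{r,m} \leq c\} \subseteq \{M_{1,m} \leq c\}$, so that for every correlation matrix $\Sigma \in \mathcal{M}_m$,
\[
\widetilde{\alpha}_m(\Sigma, r, c) \;\leq\; \text{Pr}_\Sigma\!\left\{M_{1,m} \leq c\right\}.
\]

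Next I would apply the arithmetic-mean result flagged in the abstract: for $p$-values arising from any multivariate Gaussian, $\text{Pr}(M_{1,m} \leq t) \leq t$ whenever $t \leq \tfrac{1}{2m}$. Since $c < \tfrac{1}{2m}$ by hypothesis, this yields the uniform upper bound $\widetilde{\alpha}_m(\Sigma, r, c) \leq c$ for every $\Sigma \in \mathcal{M}_m$. To show the bound is attained and realized inside $\mathcal{M}_m^E$, I would evaluate at $\Sigma = 1_m 1_m^T$: perfect correlation forces $X_{m1} = \dots = X_{mm}$ almost surely under the global null, hence $p_{m1} = \dots = p_{mm} \sim \unif$, and $M_{r,m} = p_{m1}$ exactly regardless of $r$. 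Therefore $\widetilde{\alpha}_m(1_m 1_m^T, r, c) = c$, matching the upper bound, and since $1_m 1_m^T \in \mathcal{M}_m^E \subseteq \mathcal{M}_m$, the two suprema and the value at $1_m 1_m^T$ collapse to the same quantity.

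The only non-routine ingredient is the arithmetic-mean bound $\text{Pr}(M_{1,m} \leq t) \leq t$ for $t \leq \tfrac{1}{2m}$ under arbitrary multivariate-Gaussian dependence; this is the standalone result of independent interest announced in the abstract, and the threshold $c < \tfrac{1}{2m}$ in the proposition is tuned precisely so that this bound is in force. Granting that lemma as a black box, the argument above is essentially a two-line application of the power mean inequality plus a one-line computation at perfect correlation, so the real difficulty is isolated into the separately proved auxiliary lemma rather than in this proposition itself.
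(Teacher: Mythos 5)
Your reduction has a circularity problem. The ``arithmetic-mean bound'' $\Pr(\bar P \leq t)\leq t$ for $t\leq \tfrac1{2m}$ that you invoke as a black box is not a separately proved auxiliary lemma in the paper --- it is stated as a \emph{Corollary of this very proposition} (the $r=1$ instance of $\sup_{\Sigma}\widetilde{\alpha}_m(\Sigma,r,c)=\widetilde{\alpha}_m(1_m1_m^T,r,c)$, combined with the trivial evaluation at perfect correlation). So your argument reduces the case $r\geq 1$ to the case $r=1$ via the power mean inequality, and then assumes the $r=1$ case. That reduction is valid and even elegant as far as it goes ($M_{r,m}\geq M_{1,m}$ gives the event inclusion, and the evaluation at $\Sigma = 1_m1_m^T$ yielding exactly $c$ is correct), but it leaves untouched the only genuinely hard content of the proposition: showing that among \emph{all} Gaussian correlation structures, perfect correlation maximizes the left tail of the $p$-value average when the threshold is below $\tfrac1{2m}$.

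That missing step is where all the work lies, and it is what the paper's proof actually supplies. The paper writes the event as $\{\tfrac1m\sum_i h_r(X_i)\leq C\}$ with $h_r(x)=\Phi(-x)^r$ and $C=c^r$, observes that $h_r$ is convex on $x\geq 0$ for $r\geq 1$, shows that the threshold condition $C<\tfrac{1}{2^rm^r}$ forces every $X_i>0$ on the event (so Jensen's inequality applies), and then uses the fact that $\tfrac1m\sum_i X_i$ is a one-dimensional Gaussian whose variance $\sigma_\Sigma^2=\tfrac{1}{m^2}\mathbf{1}^T\Sigma\mathbf{1}$ is maximized over $\mathcal{M}_m$ (and over $\mathcal{M}_m^E$) at $\Sigma=1_m1_m^T$. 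You need to either reproduce this convexity-plus-Jensen argument for the $r=1$ case, or cite an independent external proof of the arithmetic-mean bound; without one of those, your proof is not self-contained. Note also that if you do carry out the $r=1$ argument, your power-mean reduction becomes redundant, since the paper's argument handles all $r\geq 1$ uniformly.
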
	

\propref{worstcorr} indicates that, for all $r\geq 1$ and appropriately small $\alpha$, we only need to calibrate against the fully dependent case to have validity across the whole correlation space $\mathcal{M}_m$. The proof of \propref{worstcorr} is in \appref{worstcorrpf}, where we used the convexity of function $\Phi(-x)^r$ when $r \geq 1$ and $x>0$, and the fact that (multivariate) Gaussianity is preserved under linear transformations. It is unclear whether the restriction on $\alpha$ can be entirely removed, but it could perhaps be slightly relaxed by constant factors. {\color{black} A special case that could be particularly interesting is when $r=1$, which we record below for emphasis.}

\begin{corollary}
Let $\Sigma \in \mathcal{M}_m$ be an arbitrary positive semidefinite Gaussian correlation matrix (with possibly negative entries). Let $X \sim N(0,\Sigma)$ and let $P_i = \Phi(-X_i)$ for $i=1,\dots,m$. Then, the arithmetic average $(r=1)$ of the $p$-values, $\bar P := \frac1{m} \sum_{i=1}^m P_i$ satisfies
\[
\sup_{\Sigma \in \mathcal{M}_m} \mathrm{Pr}(\bar P \leq \alpha) \leq \alpha,\quad \text{ for any } \alpha < \tfrac1{2m}.
\]
\end{corollary}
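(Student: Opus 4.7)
The plan is to recognize the corollary as the $r=1$ case of \propref{worstcorr} and then compute the worst-case probability explicitly. First, I would observe that $\bar P = \frac{1}{m}\sum_{i=1}^m P_i$ equals $M_{1,m}(P_1,\dots,P_m)$ in the notation of \eqref{Mrm}, so $\mathrm{Pr}(\bar P \leq \alpha)$ is precisely $\widetilde{\alpha}_m(\Sigma,\, 1,\, \alpha)$ in the notation of \eqref{Hrhom}. The corollary's hypothesis $\alpha < 1/(2m)$ is exactly the requirement $c < 1/(2m)$ of \propref{worstcorr} with the choice $c = \alpha$, and $r=1$ satisfies $r \geq 1$. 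Thus the proposition applies and yields
\[
\sup_{\Sigma \in \mathcal{M}_m} \mathrm{Pr}(\bar P \leq \alpha) \;=\; \widetilde{\alpha}_m(1_m 1_m^T,\, 1,\, \alpha).
\]

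Next I would evaluate the right-hand side explicitly. Under $\Sigma = 1_m 1_m^T$, the Gaussian law $N(0, 1_m 1_m^T)$ is degenerate: the coordinates satisfy $X_1 = X_2 = \cdots = X_m$ almost surely, with $X_1 \sim N(0,1)$. Therefore $P_1 = \cdots = P_m = \Phi(-X_1)$, so $\bar P = \Phi(-X_1) \sim \mathrm{Unif}[0,1]$ and $\mathrm{Pr}(\bar P \leq \alpha) = \alpha$. Combined with the previous display this gives $\sup_{\Sigma \in \mathcal{M}_m} \mathrm{Pr}(\bar P \leq \alpha) = \alpha$, which is exactly the asserted bound.

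There is essentially no technical obstacle here, since all the analytic work---the convexity of $\Phi(-x)^r$ on the relevant range and the reduction from arbitrary Gaussian correlations to the equicorrelated sub-family and then to full dependence---already happens inside \propref{worstcorr}. The only minor point to verify is that $1_m 1_m^T$ is a (limiting) element of $\mathcal{M}_m$: its off-diagonals equal $1 \in [-1/m,\, 1]$, so the worst-case matrix lies in the domain and the supremum is attained rather than merely approached. Beyond this sanity check, the proof is a direct instantiation of \propref{worstcorr}.
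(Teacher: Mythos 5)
Your proposal is correct and matches the paper's intended derivation: the corollary is exactly \propref{worstcorr} with $r=1$ and $c=\alpha<\tfrac{1}{2m}$, followed by the observation (which also appears inside the paper's proof of the proposition, in the full-dependence computation) that under $\Sigma = 1_m 1_m^T$ all the $P_i$ coincide and $\bar P$ is uniform, so the worst-case probability is exactly $\alpha$. Your sanity check that $1_m 1_m^T$ lies in (the closure of) $\mathcal{M}_m$ is a reasonable addition; no gaps.
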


It is possible that a variant of \propref{worstcorr} also holds for $r< 1$, but we have found it to be technically intractable to prove currently. Nevertheless, for the sake of simplicity and interpretability, we next consider an intermediate case of equicorrelated Gaussians, which we observed to be worse than the other commonly used correlation structures when $r< 1$ in extensive simulations, while also encompassing the perfectly correlated case in \propref{worstcorr}. In particular, we consider only positive correlation as the semi-positive definite requirement on the correlation matrix forces the range of negative $\rho$ to be in $(-\frac1m,0)$, which vanishes as $m\to\infty$.

\begin{definition}\label{def:modeldef}
\textbf{Positively equicorrelated Gaussian}: For each $m$, the observations $X_{m1}, \dots, X_{mm}$ follow the model in \eqref{model} but with a positive equicorrelated $\Sigma_m$ having {\color{black}its elementary entry} $\rho_{ij}\equiv \rho \in [0,1]$ for all $i\neq j \in [m]$. We denote such data distribution as $G_{\mu_m, \pi_m, \rho}$.
\end{definition}

Formally, in the following \secref{cal} and \secref{pow}, we consider the model defined in \defref{modeldef}, and we write $\widetilde{\alpha}_m(\Sigma, r, c)$ in \eqref{Hrhom} as $\widetilde{\alpha}_m(\rho, r, c)$ for simplicity. We intend to study the asymptotic \smash{($m \to \infty$)} behaviour of calibrated $\widetilde{\alpha}_m(\rho, r, c)$ given fixed $\alpha$. We would like to investigate how their power varies as a function of correlation $\rho$ with respect to different $r$, and different signal settings.


\subsection{Calibration derivation}\label{sec:cal}

In this subsection, we derive the asymptotic calibration under the positively equicorrelated Gaussian model in \defref{modeldef}. First, we formally define the asymptotic calibration of our concern, and then we present our closed-form solution under the positively equicorrelated Gaussian model.

Typically, the asymptotic ($m\to\infty$) Type-I error would be defined as 
\begin{equation}\label{uniform}
   \mathcal{A}^{\star}(r):=\limsup_{m\to \infty}\sup_{\rho\in[0,1]} \widetilde{\alpha}_m(\rho, r,c).
\end{equation}
However, we found \eqref{uniform} to be intractable; specifically, before taking the outer limit, we found taking the supremum with respect to $\rho$ for fixed $m$ to be analytically infeasible under the positively equicorrelated Gaussian model. Therefore, we settle for an alternative (weaker) definition of target type-I error as the following surrogate limit:
\begin{equation}\label{point}
    \mathcal{A}(r):= \sup_{\rho\in[0,1]} \limsup_{m\to \infty} \ \widetilde{\alpha}_m(\rho,  r, c).
\end{equation}
Note that $\mathcal{A}^\star(r) \geq \mathcal{A}(r)$ deterministically\footnote{To see this, observe that $\sup_{\rho \in [0,1]} \widetilde{\alpha}_m(\rho, r,c) \geq \widetilde{\alpha}_m(\rho, r,c)$ for all $\rho \in [0,1]$ and all $m$. Taking $\limsup_m$ on both sides maintains the inequality, as does taking a further $\sup_\rho$ on both sides.}, that is control over the surrogate asymptotic type-I error is weaker. Denote the highest calibrated threshold $c$ that achieves $\mathcal{A}(r)\leq \alpha$ as $c_r(m,\alpha)$, that is 
\begin{equation}\label{Cm}
c_r(m,\alpha) := \sup\{c :\sup_{\rho\in[0,1]}\limsup_{m\to\infty} \ \widetilde{\alpha}_m(\rho, r, c) \leq \alpha\},
\end{equation}
and the corresponding limiting type-I error as
\begin{equation}\label{alpha}
    \widetilde{\alpha}(\rho, r, \alpha):=\limsup_{m\to\infty} \ \widetilde{\alpha}_m(\rho, r, c_r(m,\alpha)).
\end{equation}
In the following, we derive a closed-form expression for $c_r(m,\alpha)$, and the  corresponding $\widetilde{\alpha}(\rho,r,\alpha)$ under the positively equicorrelated Gaussian model. Note that in this setting, the observations can be written as
\begin{equation}\label{decomposition}
X_{i} = \sqrt{\rho}\ Z_{0} + \sqrt{1-\rho}\ Z_i, \quad \text{for all}\ i = 1, 2, \dots, m, 
\end{equation}
where $Z_0\sim N(0,1)$ , $Z_i \stackrel{\text{iid}}{\sim} N(0,1)$ for all $i = 1, 2, \dots ,m$, and $Z_0 \independent{\{Z_i\}_{i=1}^m}$. The corresponding one-sided $p$-values are
\begin{equation}\label{decomp2}
p_{i} = \Phi(-X_{mi}) = \Phi\left(-\sqrt{\rho}\ Z_0 - \sqrt{1-\rho}\ Z_i\right).
\end{equation}
Here we drop index $m$ as the distribution of $X$ does not change with $m$ and the same holds true for $p$-values. An important note from this decomposition is the following conditional independence,
\begin{equation}
p_1, p_2, \dots, p_m \textnormal{ are i.i.d.\ conditional on }\ Z_0,
\end{equation}
which allows us to utilize generalized law of large numbers and obtain \thmref{asymh}.
We write the expectation of $p_i^r$ when conditioning $Z_0 = z_0$ as a function of $z_0$ that is
\begin{align}\label{murho1}
g_{\rho, r}(z_0) := \EEst{p_i^r}{ Z_0 = z_0}=  \int \Phi(-\sqrt{\rho}\ z_0 - \sqrt{1-\rho}\ x)^r \phi(x) dx, 
\end{align}
noting that $g_{0,r}(z_0)$ is a constant, we enforce $g_{0,r}^{-1}(\cdot) \equiv \infty$. 

\begin{theorem}\label{thm:asymh}Under the positively equicorrelated Gaussian setting, we have that, given $\alpha \in (0, 1)$, 
	\begin{itemize}		
		\item[(a)] if $r > 0$, then $\widetilde{\alpha}(\rho, r, \alpha) = \Phi(-g_{\rho, r}^{-1}( \alpha^r ))$, and $c_r(m,\alpha)= \min\{\alpha, (\frac{r}{r+1})^{\frac1r}\}$;
		
		\item[(b)] if $-1 < r \leq 0$, then $\widetilde{\alpha}(\rho, r, \alpha) = \Phi(-g_{\rho, r}^{-1}(c_r(m,\alpha)))$, and $c_r(m,\alpha) = (\sup_{\rho \in [0,1]}{g_{\rho, r}\left(-\Phi^{-1}(\alpha)\right)})^{\frac1r}$ is not a function of $m$, where $g_{\rho,r}$ is defined in \eqref{murho1};
		
		\item[(c)]  if $r =-1$, then $\widetilde{\alpha}(\rho, r, \alpha)= \alpha \ones\{\rho=0\},$ and $c_r(m,\alpha) =  \frac{\alpha}{1+\alpha  \log{m}}$ as $\alpha\to 0$;
	
		\item[(d)]  if $r <-1$, then $\widetilde{\alpha}(\rho, r, \alpha)= \alpha \ones\{\rho=0\},$ and $c_r(m,\alpha) = \alpha m^{\frac{1}{|r|}-1}$ as $\alpha\to 0$.
	\end{itemize}
 In all four cases, we have that $c_r(m,\alpha) \leq \alpha$.
\end{theorem}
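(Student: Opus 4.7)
The plan is to exploit the conditional iid decomposition $X_i = \sqrt{\rho}\, Z_0 + \sqrt{1-\rho}\, Z_i$: given $Z_0 = z_0$, the p-values $p_i = \Phi(-\sqrt{\rho}\, z_0 - \sqrt{1-\rho}\, Z_i)$ are iid with distribution depending on $z_0$ and $\rho$. For each regime of $r$, I would analyze the asymptotic behavior of $M_{r,m}^r = \frac{1}{m}\sum_{i=1}^m p_i^r$ conditional on $Z_0$, integrate over the marginal of $Z_0$ to obtain $\widetilde{\alpha}_m(\rho, r, c)$, and invert to read off the critical value $c_r(m,\alpha)$.

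For cases (a) and (b) ($r > -1$), $p_i^r$ has finite conditional expectation $g_{\rho,r}(Z_0)$, so Kolmogorov's SLLN applies conditionally and $M_{r,m} \to g_{\rho,r}(Z_0)^{1/r}$ almost surely. The map $z_0 \mapsto g_{\rho,r}(z_0)$ is strictly monotone (decreasing for $r>0$, increasing for $-1<r<0$) by a Gaussian location-shift argument; hence for $\rho \in (0,1)$ one inverts to obtain $\widetilde{\alpha}(\rho, r, c) = \Phi(-g_{\rho,r}^{-1}(c^r))$. The boundaries $\rho = 0$ (a deterministic limit) and $\rho = 1$ (where $M_{r,m} \equiv \Phi(-Z_0) \sim U[0,1]$) are handled separately. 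Optimizing over $\rho \in [0,1]$ then yields $c_r(m,\alpha)$: in (a) the $\rho = 1$ endpoint forces $c \leq \alpha$ while interior $\rho$ contributes the Markov-type bound $(r/(r+1))^{1/r}$; in (b) the supremum is attained at an interior $\rho$ and gives the displayed expression in terms of $\sup_\rho g_{\rho,r}(-\Phi^{-1}(\alpha))$.

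Cases (c) $r = -1$ and (d) $r < -1$ require heavier-tailed analysis because $E[p_i^r]$ is infinite under a uniform marginal. The key observation is that for any $\rho > 0$ the common Gaussian shift makes the conditional lower tail of $p_i$ polynomially lighter: a direct computation gives $\PPst{p_i \leq t}{Z_0} = \Phi\bigl((\Phi^{-1}(t) + \sqrt{\rho}\, Z_0)/\sqrt{1-\rho}\bigr)$, which decays like $t^{1/(1-\rho)}$ (up to subpolynomial factors) as $t \to 0$. In particular $\EEst{1/p_i}{Z_0}$ is almost surely finite whenever $\rho > 0$, so $M_{-1,m}$ converges by SLLN to a strictly positive random limit; since the proposed threshold $c_{-1}(m,\alpha) \to 0$ as $m \to \infty$, this immediately gives $\widetilde{\alpha}(\rho, -1, \alpha) = 0$ for $\rho > 0$. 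A similar argument (with a conditional stable CLT when the conditional mean is still infinite) handles $r < -1$, so only $\rho = 0$ contributes nontrivially. For $\rho = 0$ the analysis reduces to the iid uniform case: when $r < -1$, $p_i^r$ is regularly varying with tail index $1/|r| \in (0,1)$ and the generalized CLT gives $m^{-|r|}\sum p_i^r$ converging to a one-sided $1/|r|$-stable law, yielding $c_r(m,\alpha) = \alpha\, m^{1/|r| - 1}$ as $\alpha \to 0$; when $r = -1$, $1/p_i$ has exact Pareto tail $1/t$, the boundary case, and a Darling-type $\log m$ centering yields $c_{-1}(m,\alpha) = \alpha/(1 + \alpha \log m)$ as $\alpha \to 0$.

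The main obstacle will be case (c): the tail index $\beta = 1$ sits on the edge of the domain of attraction of stable laws, so the correct normalization involves $\log m$ centering rather than a pure power-law scaling, and one must invoke non-standard generalized CLT results (closely related to the analysis of the Cauchy combination test). A secondary technical issue throughout (a)--(d) is interchanging limits: conditional a.s.\ convergence of $M_{r,m}$ must be promoted to convergence of the unconditional $\textnormal{Pr}(M_{r,m} \leq c)$ via dominated convergence, and the monotonicity and continuity of $g_{\rho,r}$ must be verified so that its inverse is well-defined on the relevant range of $c^r$.
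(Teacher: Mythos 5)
Your proposal is correct and follows essentially the same route as the paper: the conditional i.i.d.\ decomposition given $Z_0$, a conditional law of large numbers plus inversion of the monotone map $g_{\rho,r}$ for $r>-1$, dominated convergence to pass the limit inside the expectation over $Z_0$, and a heavy-tailed/stable-law analysis for $r\leq -1$ that isolates $\rho=0$ as the only regime where the calibrated threshold does not force degeneracy. Your observation that for $r=-1$ and $\rho>0$ the conditional mean of $1/p_i$ is already finite (tail index $1/(1-\rho)>1$), so an ordinary SLLN suffices there, is a mild streamlining of the paper's uniform use of the generalized LLN across all $\rho$-regimes, but the argument is otherwise the same.
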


The proof of \thmref{asymh} is in \appref{asymhpf}, where we mainly use the decomposition described above and  generalized law of large numbers. From \thmref{asymh}, we can see {\color{black}that} the calibrated threshold $c_r(m,\alpha)$ under positively equicorrelated Gaussian is less conservative than that under arbitrary correlation in \lemref{valid}; particularly, for $r>0$, by a factor of $(r+1)^{1/r}$; for $r=0$, by a factor of $\frac{|\log{\alpha}|+1}{|\log{\alpha}|}$; for $r=-1$, by a factor \footnote{Here we use the approximation $\alpha_{-1,m}\sim \log{m}$ as $m\to\infty$ in { \color{black} \citet[Proposition 6]{vovk2020combining}} to make the expression cleaner. But in \figref{ratio}, we use the numerical solution as stated in \lemref{valid}.} of $\frac{\log{m}}{\alpha\log{m} + 1}$; for $r<-1$, by a factor of $\frac{|r|}{|r|-1}$. \figref{ratio} displays these ratios for $r \in [-10,20]$. We can see that, as $|r|\to\infty$, our positively equicorrelated Gaussian calibration is almost the same {\color{black} as the} calibration derived by \citet[~\lemref{valid}]{vovk2020combining},  for arbitrary dependence, indicating that we do not pay much price for calibrating against positive equicorrelation to arbitrary dependence; while as $|r|\to 1$, the positively equicorrelated Gaussian calibration is much tighter than that of arbitrary correlated Gaussian in~\citet{vovk2020combining}.

\begin{figure}
		\centering
		\includegraphics[width=0.7\textwidth]{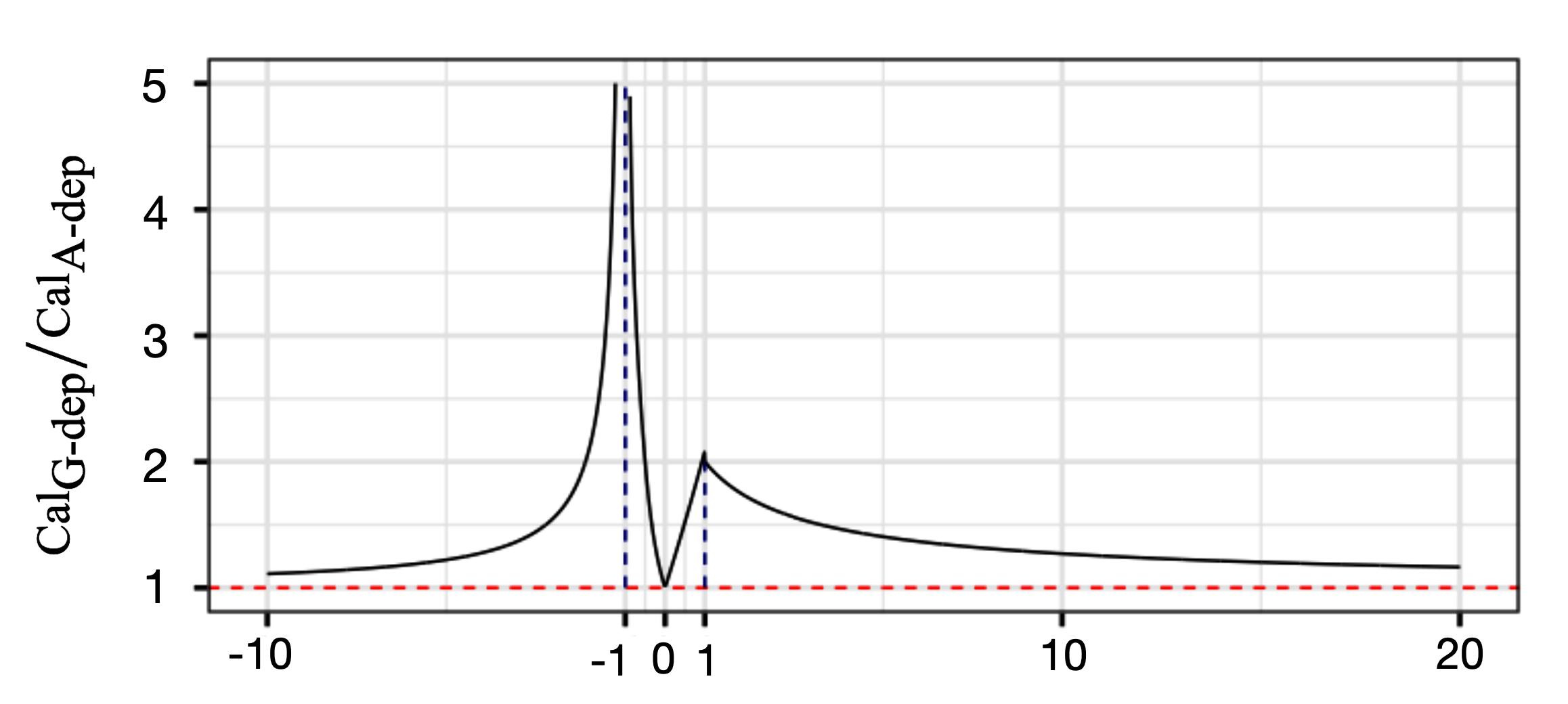}
		\caption{The theoretical ratio of calibrated threshold under positively equicorrelated Gaussian ($\text{Cal}_{\text{G-dep}}$) and under arbitrary dependence ($\text{Cal}_{\text{A-dep}}$) versus different $r$.}\label{fig:ratio}
\end{figure}


\begin{figure}[H]
    \centering
    \begin{subfigure}{0.45\textwidth}
        \centering
    \caption{$m = 10^2$}
    \includegraphics[width=\textwidth]{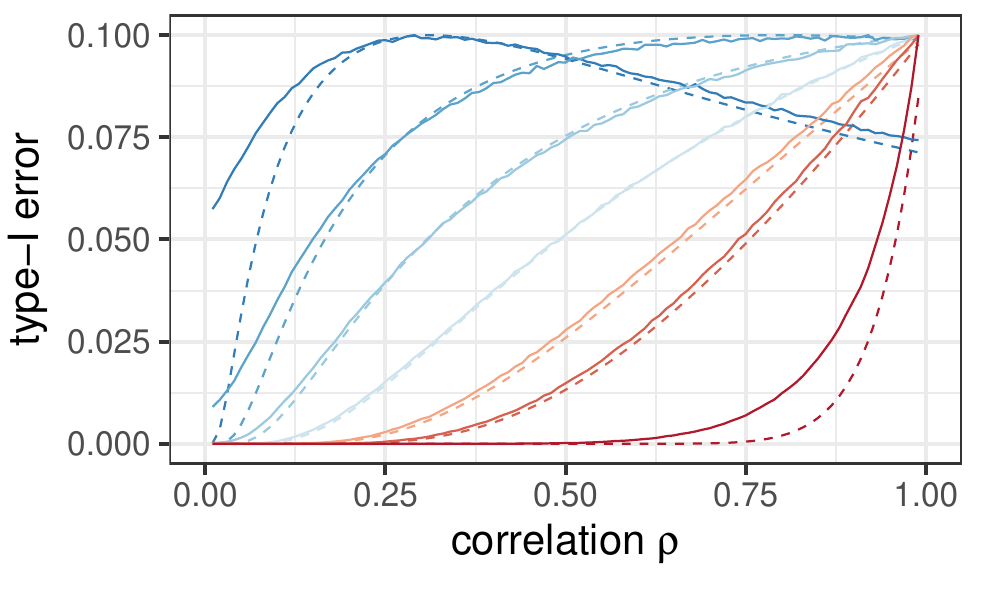}
    \end{subfigure}%
    \begin{subfigure}{0.55\textwidth}
        \centering
    \caption{$m = 10^5$}
    \includegraphics[width=\textwidth]{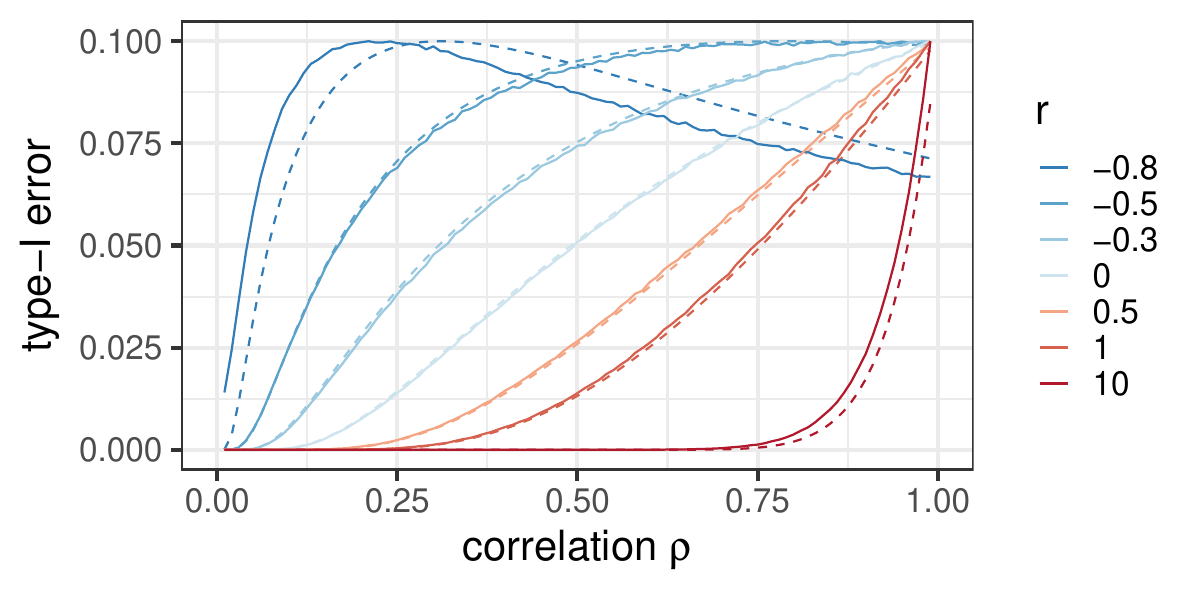}
    \end{subfigure}
	\caption{The asymptotic uniform type-I error $\widetilde{\alpha}(\rho,r,\alpha)$ using the calibrated positively equicorrelated Gaussian test (dotted line, identical in both subplots), and the empirical point-wise Type-I error $\widetilde{\alpha}_m^{\star}(\rho,\alpha,r)$ (solid line) with $m=10^2$ (left) and $m=10^5$ (right) of different method for adjusting dependence via simulation (averaging over $10^6$ trials) versus correlation given different $r>-1$ at confidence level $\alpha = 0.1$.}\label{fig:asymest}
\end{figure}

Next, we conduct some large-scale simulations (in terms of $m$) to justify the surrogate control in \thmref{asymh}. Explicitly, we compare our derived asymptotic type-I error $\widetilde{\alpha}(\rho,r,\alpha)$ under the surrogate calibration (i.e. $\mathcal{A}(r)\leq \alpha$) with the type-I error $\widetilde{\alpha}^\star_m(\rho, r,\alpha)$ under the ideal calibration (i.e. $\mathcal{A}^{\star}(r)\leq \alpha$), that is 
\begin{align}\label{Hrhom-star}
\widetilde{\alpha}^\star_m(\rho, r,\alpha) & := \text{Pr}_{\bigcap_{i=1}^{m} H_{mi}}\left\{\left(\frac1m \sum_{i = 1}^m p_{mi}^r\right)^{\frac{1}{r}} \leq c_r^\star(m,\alpha)\right\}\\
& \textnormal{with}\quad      c_r^{\star}(m,\alpha) := \sup\{c : \sup_{\rho\in[0,1]}\widetilde{\alpha}_m(\rho, r, c) \leq \alpha\}.
\end{align}
\figref{asymest} empirically shows that,  for $r>-1$, $c_r(m,\alpha)$ can also approximately achieve control $\widetilde{\mathcal{A}}^{\star}(r)\leq \alpha$ in the sense that $\widetilde{\alpha}(\rho,r,\alpha) \approx \widetilde{\alpha}_m^{\star}(\rho,r,\alpha)$ for each $\rho\in[0,1]$ if $m$ is large enough.

\begin{figure}[H]
    \centering
    \includegraphics[width=\textwidth]{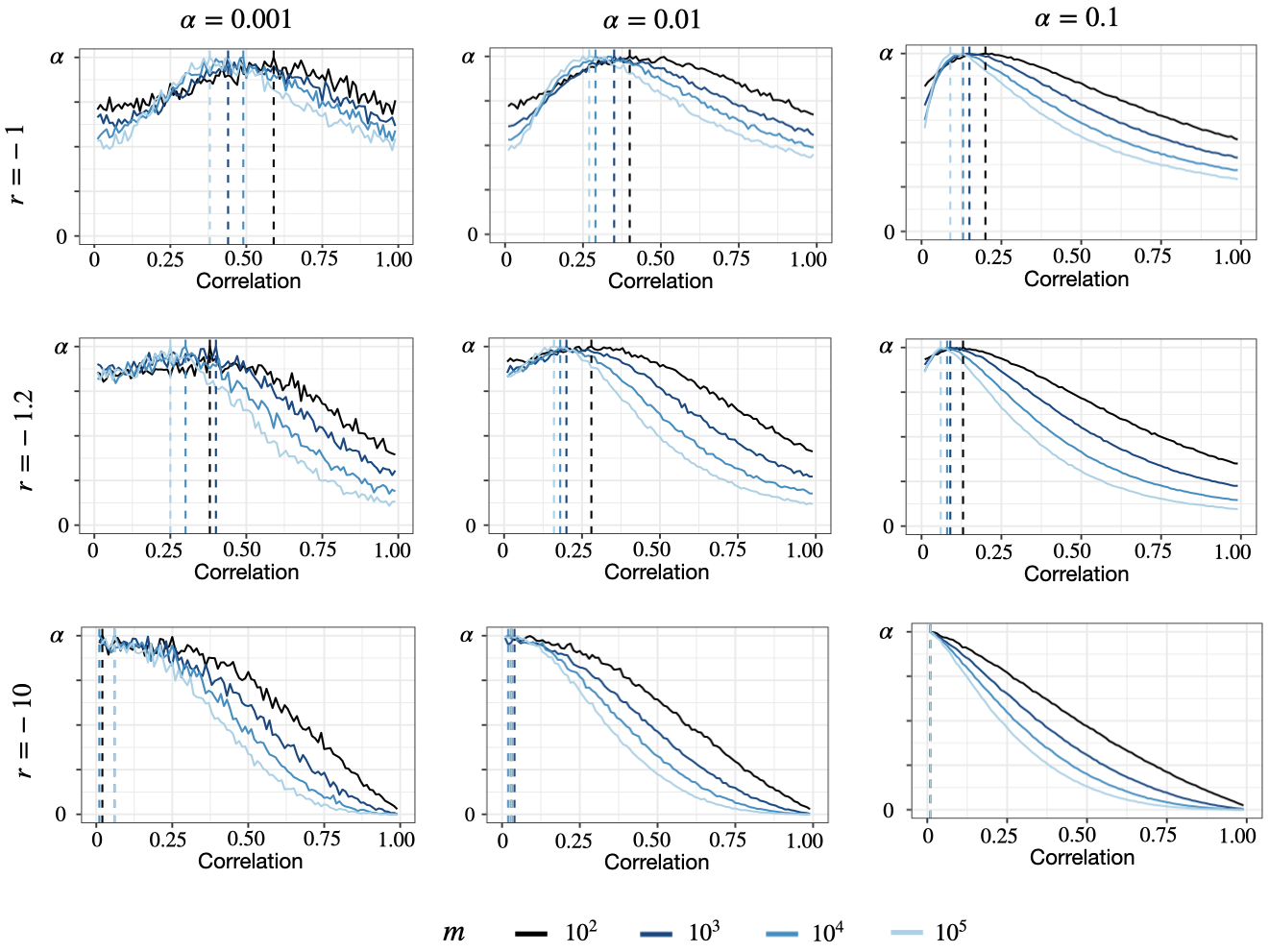}
	\caption{The empirical point-wise Type-I error $\widetilde{\alpha}_m^{\star}(\rho,r,\alpha)$ (solid line) with different \smash{$m\in\{10^2,10^3,10^4,10^5\}$} of different method for adjusting dependence via simulation (averaging over $10^6$ trials) versus correlation given different $r\leq -1$ at confidence level {\color{black}$\alpha = 0.001,0.01,0.1$ in first, second and third column respectively}. The dashed vertical lines indicate the ``worst case" correlation given fixed $m,r,\alpha$, that is  $\arg\max_{\rho\in[0,1]}\widetilde{\alpha}_m^{\star}(\rho,r,\alpha)$.}\label{fig:smallalpha}
\end{figure}

For $r \leq -1$ the approximation is much looser as the convergence (for point-wise $\rho$) in generalized law of large numbers is much slower and out of feasible simulation scope. Nevertheless, from \figref{smallalpha} one may see a trend of convergence with regard the current limited magnitude of $m$: the ``worst case" correlation given fixed $m,r,\alpha$, that is  $\arg\max_{\rho\in[0,1]}\widetilde{\alpha}_m^{\star}(\rho,r,\alpha)$ slowly approaches $0$ as $m$ grows, at different rate given different $\alpha$ (faster for $\alpha$ away from $0$), and the point-wise Type-I error $\widetilde{\alpha}_m^{\star}(\rho,r,\alpha)$ slowly approaches \smash{$\widetilde{\alpha}(\rho,r,\alpha) = \alpha\ones\{\rho=0\}$} as $m$ grows.





\subsection{Power analysis}\label{sec:pow}
In this subsection, we study the power using the calibrated threshold $c_r(m,\alpha)$ derived in \secref{cal}. We look at the case $r>0$ and $r\leq -1$ separately, under different alternative settings. Given $\alpha$, denote the power function for different $r$ under distribution $G_{\mu_m,\pi_m,\rho}$ in \defref{modeldef} as
\begin{equation}\label{beta}
    \beta_{\mu_m,\pi_m,\rho}(r, \alpha) := \mathbb{P}_{G_{\mu_m,\pi_m,\rho}}\left\{\left(\frac1m \sum_{i=1}^m p_{mi}^r\right)^{\frac1r} \leq c_r(m,\alpha)\right\},
\end{equation}
with $c_r(m,\alpha)$ is specified in \thmref{asymh}. 

In the following, we are interested in the asymptotic behaviour of $\beta_{\mu_m,\pi_m,\rho}(r, \alpha)$ under different settings for $\mu_m$ and $\pi_m$.

\begin{theorem}\label{thm:asymp1}
	Fix $r \geq 0$, and consider the positive equicorrelated Gaussian model in \defref{modeldef}, where
	\[
	\lim_{m\to\infty}\mu_m  = \mu \in [0, \infty], \quad \lim_{m\to\infty}\pi_m = \pi \in [0,1].
	\]
	Then for any $\alpha \in (0, (\frac{r}{r+1})^{\frac1r})$, $\rho \in [0,1]$, we have that the asymptotic power
	\begin{equation}
	    \lim_{m\to\infty} \beta_{\mu_m,\pi_m,\rho}(r, \alpha) = \lim_{m\to \infty}\PP{ \pi_{m}g_{\rho, r} (Z_0 + \tfrac{\mu_{m}}{\sqrt{\rho}}) + (1-\pi_m) g_{\rho, r} (Z_0) \leq  \alpha^r} \in  [ \widetilde{\alpha}(\rho, r, \alpha), 1 ],
	\end{equation}
 with $g_{\rho,r}$ defined in \eqref{murho1}, $\widetilde{\alpha}(\rho, r, \alpha)\in [0, \alpha]$ defined in \eqref{alpha}, and $Z_0$ as a standard Gaussian random variable. In particular, 
\begin{itemize}
    \item  if $\pi = 1$, then 
    \begin{align}\label{pi1}
    \lim_{m\to\infty} \beta_{\mu_m,\pi_m,\rho}(r, \alpha)  = \begin{cases}
    1, & \quad \text{if}\ \mu = \infty;\\
    \Phi\left(-g_{\rho,r}^{-1}(\alpha^r)+\frac{\mu}{\sqrt{\rho}}\right) , & \quad \text{if}\ 0< \mu <\infty;\\
    \widetilde{\alpha}(\rho, r, \alpha), & \quad \text{if}\ \mu =0.
    \end{cases}
    \end{align}
    
    \item if $0< \pi< 1$, then \begin{equation}\label{pi0}
        \lim_{m\to\infty} \beta_{\mu_m,\pi_m,\rho}(r, \alpha)  = \begin{cases}
    \Phi\left(-g_{\rho,r}^{-1}(\frac{\alpha^r}{1-\pi})\right), & \quad \text{if}\ \mu = \infty;\\
     \begin{aligned}[b]
       \PP{ \pi g_{\rho, r} (Z_0 + \frac{\mu}{\sqrt{\rho}})  + (1-\pi) g_{\rho, r} (Z_0) \leq \alpha^r},
     \end{aligned}
      & \quad \text{if}\ 0< \mu <\infty;\\
      \widetilde{\alpha}(\rho, r, \alpha), & \quad \text{if}\ \mu =0.
    \end{cases}
    \end{equation}
    \item if $\pi = 0$, then  $\lim_{m\to\infty} \beta_{\mu_m,\pi_m,\rho}(r, \alpha) = \widetilde{\alpha}(\rho, r, \alpha)$, no matter what value that $\mu$ takes. 
    \end{itemize}
\end{theorem}

The proof of \thmref{asymp1} is in \appref{asymppf1}, where we use a triangular-array version of the generalized law of large numbers. {\color{black}As a quick sanity check, expressions in \eqref{pi1} are always in $\left[\widetilde{\alpha}(\rho, r, \alpha),\  1\right]$, while expressions in \eqref{pi0} are always in $\left[\widetilde{\alpha}(\rho, r, \alpha),\  \Phi\left(-g_{\rho,r}^{-1}(\frac{\alpha^r}{1-\pi})\right)\right]$.} Also we can verify some other intuitive facts: the power asymptotically goes to one when $\pi=1$ and $\mu=\infty$, while it drops to the lower bound $\widetilde{\alpha}(\rho, r, \alpha)$ in the worst case \smash{(i.e. $\pi=0$ or $\mu = 0$)}. \thmref{asymp1} also indicates some surprising findings that are somewhat exclusive to the case of $r>0$: the asymptotic power will not go to one even when $\pi=1$ if the signal strength is finite, that is $\mu <\infty$; a similar phenomenon occurs when $\mu=\infty$ but  $\pi <1$. These counter-intuitive behaviours happen due to the nature of the combination choices with $r>0$, where the combination is essentially a weighted average of $p$-values with a monotonic increasing transformation, thus will be dominated by large $p$-values. Therefore, in the case of $r>0$, as long as there is a non-diminishing part of observations that are most likely to generate large $p$-values (e.g. non-signals or weak signals), then we will lose power.

To see an explicit example of \thmref{asymp1}, we consider the case that $r=1$, $\pi\in(0,1)$, with $\mu=\infty$ for simplicity. In this case the combination becomes $1-\pi$ times the arithmetic mean of $p$-values (i.e. $\frac{1-\pi}{m}\sum_{i=1}^m p_{mi}$). Therefore, from the definition the  asymptotic power, we know it equals $ \Phi(-g_{\rho,r}^{-1}(\frac{\alpha^r}{1-\pi}))) = \Phi(-\infty) = 0$ from the definition of $g_{0,r}^{-1}$, which agrees with \thmref{asymp1}. This example indicates a more general phenomenon: when $r\geq 1$, as long as there are some non-signals (i.e. $\pi <1$), the asymptotic power under independence will always be 0 no matter how strong the signal is (i.e. how large $\mu$ is). On the other hand, if $\rho=1$, then we have the asymptotic power equals  $\frac{\alpha}{1-\pi}$, which will be close to one if and only if $\pi\to1$. From this simple example, we justify for the behaviour many \citep{vovk2020combining,wilson2020generalized} observed in experiments: that is the combination choice with $r\geq 1$ will be powerless unless there are many strong signals with heavy dependence. 

In the following, we study the case when $r\leq-1$. Firstly, we look at the setting with moderate signal strength, that is $\mu_m = o(\sqrt{\log{m}})$. The following \thmref{r-1weak}  shows that, as long as the signal is not strong enough, the asymptotic power for $r\leq -1$ will always degenerate, no matter how dense the signal is.
\begin{theorem}\label{thm:r-1weak}
	Consider the positive equicorrelated Gaussian model in \defref{modeldef} where
	\begin{equation*}
	    \mu_m = o(\sqrt{\log{m}}), \quad \lim_{m\to\infty}\pi_m = \pi \in [0,1],
	\end{equation*}
	For $r \leq -1$, we have that for all $\rho \in [0,1]$ and $\alpha > 0$,
	\begin{equation}
	    \lim_{m\to \infty} \beta_{\mu_m,\pi_m,\rho}(r, \alpha)=  \alpha\ones\{\rho=0\}.
	\end{equation}
\end{theorem}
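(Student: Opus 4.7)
The plan is to use the Gaussian decomposition \eqref{decomposition}, writing $p_i=\Phi(-\mu_m B_i-\sqrt\rho\,Z_0-\sqrt{1-\rho}\,Z_i)$ so that the $p_i$'s are i.i.d.\ conditional on $Z_0$. Since $r<0$, the rejection event $(\tfrac1m\sum p_i^r)^{1/r}\le c_r(m,\alpha)$ is equivalent to $\tfrac1m\sum_{i=1}^m p_i^r\ge c_r(m,\alpha)^r$, and by \thmref{asymh}(c)(d) this right-hand side diverges as $m\to\infty$: it grows like $1/\alpha+\log m$ when $r=-1$ and like $\alpha^{-|r|}m^{|r|-1}$ when $r<-1$. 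I would then split into the two cases $\rho>0$ and $\rho=0$.

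For $\rho>0$ I condition on $Z_0$ and argue that the conditional rejection probability vanishes pointwise. A Mills-ratio bound on the Gaussian integral defining $\EEst{p_i^r}{Z_0=z_0}$ gives, in the regime $\rho>1-1/|r|$ where this conditional moment is finite, an upper estimate of the form $C(\rho,r)\exp\!\bigl(|r|(\mu_m/\sqrt\rho+|z_0|)^2/(2(1-|r|(1-\rho)))\bigr)$; under $\mu_m=o(\sqrt{\log m})$ this is $m^{o(1)}$ for a.e.\ $z_0$, hence is eventually dominated by the diverging threshold $c_r(m,\alpha)^r$, so conditional Chebyshev gives $\PP{\tfrac1m\sum p_i^r\ge c_r(m,\alpha)^r\mid Z_0=z_0}\to 0$. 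For $r<-1$ with small $\rho$ where the conditional moment diverges, I would truncate $p_i^r$ at a level $M_m\uparrow\infty$ with $M_m=o(c_r(m,\alpha)^r)$, apply the same bound to the truncated variables, and control the discarded mass via $m\,\PP{p_i\le M_m^{1/r}\mid Z_0}\to 0$ using the same Mills-ratio estimate. Integrating over $Z_0$ by dominated convergence (bounded by $1$ against $\phi(z_0)\diff{z_0}$) then yields $\widetilde\beta(\rho,r,\alpha)\to 0$.

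For $\rho=0$ the $p_i$'s are genuinely i.i.d., drawn from the mixture $(1-\pi_m)\unif+\pi_m F_{\mu_m}$ with $F_{\mu_m}(t)=\Phi(\mu_m+\Phi^{-1}(t))$. I would reuse the generalized central limit theorem that underlies the calibration in \thmref{asymh}(c)(d): $p_i^r$ is in the domain of attraction of a $1/|r|$-stable law, and the normalized sum converges to the null stable limit whenever the mixture tail is tail-equivalent to the uniform tail on the stable-law scale. The controlling estimate is a Mills-ratio bound
\[
\tfrac{F_{\mu_m}(t)}{t}\;\le\;(1+o(1))\,\exp\!\bigl(\mu_m\sqrt{2\log(1/t)}-\mu_m^2/2\bigr)\quad(t\downarrow 0),
\]
which on the relevant window $t\in[b_m^{-1},1]$ (with $b_m\asymp m\log m$ for $r=-1$ and $b_m\asymp m^{|r|}$ for $r<-1$) equals $m^{o(1)}$ uniformly, since $\mu_m\sqrt{\log(1/t)}\le\mu_m\sqrt{|r|\log m}=o(\log m)$. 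A truncation plus characteristic-function (or Laplace-transform) comparison with the null then shows the stable limit is unchanged, so the rejection probability tends to $\alpha$.

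The main obstacle is the $\rho=0$ analysis: because the signal boost is only $m^{o(1)}$ (not $1+o(1)$), a naive regular-variation argument fails and one must carry the $t$-uniform slowly-varying bound all the way through to convergence of characteristic functions, including the delicate logarithmic centering in the $r=-1$ case. The parallel difficulty on the $\rho>0$ side is the possible divergence of $\EEst{p_i^r}{Z_0}$ when $r<-1$ and $\rho<1-1/|r|$, which the truncation step sketched above is designed to circumvent by replacing the moment argument with a combined truncated-moment plus union-bound estimate.
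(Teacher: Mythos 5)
Your architecture (condition on $Z_0$; for $\rho>0$ replace the limit theorem by truncated-moment/Markov bounds against the diverging threshold; for $\rho=0$ redo the stable limit under the alternative) is a genuinely different and more elementary route than the paper's, which instead verifies the hypotheses of a general triangular-array infinitely-divisible limit theorem (\lemref{infdivi}) conditional on $Z_0$. For $r<-1$ and $\rho>0$ your truncation argument does close: the threshold $\alpha^{-|r|}m^{|r|-1}$ grows polynomially, while with truncation level $M_m=m^{(1-\rho)|r|+\epsilon}$ the truncated conditional mean is $m^{(1-\rho)|r|-1+o(1)}$ and the union-bound remainder is $m^{-\epsilon\beta+o(1)}$, the signal contributing only $m^{o(1)}$ factors since $\mu_m^2=o(\log m)$; because $\rho>0$ gives $(1-\rho)|r|-1<|r|-1$, both terms are dominated.

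Two steps fail, however, and neither is repairable under the stated hypotheses. First, for $r=-1$ and $\rho>0$ the threshold diverges only like $\log m$, and your own Mills-ratio estimate gives $\EEst{p_i^{-1}}{Z_0=z_0}=\exp\left(\Theta(\mu_m^2)\right)$, which is $m^{o(1)}$ but is \emph{not} $o(\log m)$: with $\mu_m=(\log m)^{0.49}$ the conditional mean is $\exp\left(c(\log m)^{0.98}\right)\gg\log m$, so the Markov step does not vanish. This is not slack in the bound: a typical signal $p$-value already has $p_i^{-1}=e^{\mu_m^2/2+O(\mu_m)}$, so for $\pi>0$ the bulk of $\frac1m\sum_i p_i^{-1}$ genuinely exceeds $1/\alpha+\log m$ and the rejection probability tends to one, not zero. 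Second, at $\rho=0$ tail-equivalence fails on the relevant scale: at $t\asymp 1/m$ your bound gives $F_{\mu_m}(t)/t\approx\exp\left(\mu_m\sqrt{2\log m}\right)$, which is $1+o(1)$ only if $\mu_m=o(1/\sqrt{\log m})$; for fixed $\mu>0$ and $\pi>0$ the array's L\'evy measure diverges and the null stable limit is not preserved. Concretely, for $r<-1$ the single term $\frac1m(\min_i p_i)^{-|r|}\approx m^{|r|-1}e^{|r|\mu\sqrt{2\log m}}$ already exceeds $\alpha^{-|r|}m^{|r|-1}$ with probability tending to one, so the power tends to one rather than $\alpha$. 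The "main obstacle" you flagged is thus a real obstruction, not a technicality: in these regimes the stated limit fails unless $\mu_m$ is restricted much further (e.g.\ $\mu_m\sqrt{\log m}\to0$) or $\pi_m\to0$. The paper's own proof passes over the same points — its UAN check controls only individual summands, its centering computation for $\rho>0$ drops the $\exp(\Theta(\mu_m^2))$ signal contribution, and its $\rho=0$ case never engages the alternative distribution — so your attempt cannot be certified against it, but you should be aware that the gap lies as much in the statement as in your argument.
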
	
\noindent The proof of \thmref{r-1weak} is in \appref{r-1weakpf}, where we mainly use results about limitation of infinitely divisible random variables. \thmref{r-1weak} indicates that, as long as the signal is not strong enough, the combination with $r\leq -1$ will be powerless unless under independence. Despite of  the observed robustness under dependency for cases of $r\leq -1$ in experiments conducted by \citet{wilson2019harmonic}, the robustness will diminish as the number of hypotheses goes to infinity, and the method actually becomes highly sensitive to the dependence in the end. This phenomenon arises from the fact that the gap between the calibrated threshold grows at least sub-linearly (i.e. $O(m^\epsilon)$ with $\epsilon>0$) for different $\rho$, therefore the conservativeness from calibration grows with the number of hypotheses, and results in high sensitivity to dependence in the end.

In the following, we study the setting when the signal is strong enough for the test to have power one. Specifically, that is when $\mu_m \geq O(\sqrt{\log{m}})$. 

\begin{theorem}\label{thm:r-1strong}
    For $r \leq -1$, consider the positive equicorrelated Gaussian model in \defref{modeldef} where
    \[
    \mu_m = \sqrt{2c\log{m}},\quad \textnormal{with } c >0.
    \]
    For all $\rho \in [0,1]$, if further one of the following is satisfied:
    \begin{itemize}
        \item[(a)] $\lim_{m\to\infty}\pi_m = \pi>0$, and $\sqrt{c} > 1-\sqrt{(1-\rho)}$;
        \item[(b)] $ \pi_m = m^{\gamma-1}$, where $0 < \gamma < 1$, and $\sqrt{c} > 1-\sqrt{\gamma(1-\rho)}$,
    \end{itemize}  
   then we have that, for all $\alpha > 0$,
    \begin{equation}
       \lim_{m\to \infty} \beta_{\mu_m,\pi_m,\rho}(r, \alpha) =1.
    \end{equation}
\end{theorem}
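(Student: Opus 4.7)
The plan is to reduce the rejection event to a Bonferroni-type condition on the minimum signal $p$-value, then apply standard Gaussian extreme-value analysis after conditioning on $Z_0$. Since $r < 0$, the rejection event $(m^{-1}\sum_{i=1}^m p_{mi}^r)^{1/r} \leq c_r(m,\alpha)$ is equivalent to $\sum_{i=1}^m p_{mi}^r \geq m\, c_r(m,\alpha)^r$; bounding the sum below by its largest term shows it suffices that $\min_{i} p_{mi} \leq m^{1/r} c_r(m,\alpha) =: t_m$. Plugging in the calibrations from \thmref{asymh}: for $r < -1$, $t_m = \alpha/m$; for $r = -1$, $t_m = \alpha/[m(1+\alpha\log m)] \asymp 1/(m\log m)$; and for $r = -\infty$, $t_m = \alpha/m$ directly. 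In every case $\Phi^{-1}(1 - t_m) = \sqrt{2\log m}\,(1 + o(1))$.

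Next I would use the equicorrelation decomposition $X_{mi} = \sqrt{\rho}\, Z_0 + \sqrt{1-\rho}\, Z_i + \mu_m B_i$ from \eqref{decomposition}. On the high-probability event $\mathcal{G}_m = \{|Z_0| \leq K\} \cap \{|\mathcal{H}_0^c| \geq m \pi_m/2\}$ (holding with probability $\to 1$ for $K$ large enough, by the LLN applied to the $B_i$'s), I would condition on $(Z_0, \mathcal{H}_0^c)$; the $p$-values indexed by $\mathcal{H}_0^c$ are then conditionally i.i.d., and the target event $\{\min_{i \in \mathcal{H}_0^c} p_{mi} \leq t_m\}$ becomes $\{\max_{i \in \mathcal{H}_0^c} Z_i \geq \tau_m\}$, where
\[
\tau_m \,=\, \frac{\Phi^{-1}(1 - t_m) - \mu_m - \sqrt{\rho}\, Z_0}{\sqrt{1-\rho}} \,=\, (1 - \sqrt{c})\sqrt{\tfrac{2\log m}{1-\rho}}\,(1 + o(1))
\]
uniformly on $|Z_0| \leq K$, using $\mu_m = \sqrt{2c\log m}$.

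By Mills' ratio (for $\tau_m$ positive and large), $\PP{Z_1 \geq \tau_m} \gtrsim (\log m)^{-1/2}\, m^{-(1-\sqrt{c})^2/(1-\rho)}$, and by conditional independence
\[
\PPst{\max_{i \in \mathcal{H}_0^c} Z_i < \tau_m}{Z_0, \mathcal{H}_0^c} \,\leq\, \exp\left(-|\mathcal{H}_0^c|\cdot \PP{Z_1 \geq \tau_m}\right).
\]
Under (a), $|\mathcal{H}_0^c| \asymp m$, so the exponent is of order $m^{1 - (1-\sqrt{c})^2/(1-\rho)}/\sqrt{\log m}$, which diverges iff $(1-\sqrt{c})^2/(1-\rho) < 1$, i.e., $\sqrt{c} > 1 - \sqrt{1-\rho}$; this is precisely the hypothesis. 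Under (b), $|\mathcal{H}_0^c| \asymp m^\gamma$, and divergence requires $\sqrt{c} > 1 - \sqrt{\gamma(1-\rho)}$, again the hypothesis. Hence $\widetilde{\beta}(\rho, r, \alpha) \to 1$.

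The main obstacle I expect is keeping the $o(1)$ corrections in $\Phi^{-1}(1-t_m)$, in Mills' ratio, and in the $Z_0$-perturbation simultaneously under control so that the strict inequalities in the hypotheses yield a strictly positive margin in the limit exponent; the $(\log m)^{-1/2}$ loss from Mills' ratio cannot absorb a zero margin, making strict inequality essential. Two edge cases also require separate handling. If $\sqrt{c} \geq 1$, then $\tau_m \to -\infty$ on $|Z_0| \leq K$, so even a single signal rejects with probability $\to 1$ and the analysis becomes trivial. If $\rho = 1$, the decomposition degenerates (all $X_{mi} = Z_0 + \mu_m B_i$), reducing the problem to one-dimensional shifted-Gaussian detection, where $\sqrt{c} > 1$ directly yields rejection with probability $\to 1$.
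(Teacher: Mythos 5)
Your proposal is correct and follows essentially the same route as the paper's proof: lower-bound the sum $\sum_i p_{mi}^r$ by its largest term so that rejection reduces to $\min_i p_{mi}$ falling below a Bonferroni-type threshold, then use the equicorrelation decomposition conditional on $Z_0$ and Gaussian extreme-value asymptotics for the maximum over the signal set, yielding exactly the conditions $\sqrt{c} > 1-\sqrt{1-\rho}$ and $\sqrt{c} > 1-\sqrt{\gamma(1-\rho)}$. Your version is if anything more careful than the paper's (explicit Mills'-ratio bounds, concentration of the number of signals, and the $\sqrt{c}\geq 1$ and $\rho=1$ edge cases), but the underlying sandwiching argument is the same.
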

The proof of \thmref{r-1strong} is in \appref{r-1strongpf}, where we use a sandwiching argument, similar to \citet{liu2020cauchy}. \thmref{r-1strong} (a) indicates that, in order to achieve full power at different $\rho$, the signal strength needs to be stronger under heavy dependence. This conclusion agrees with the intuition since the power is fundamentally related to the tail of transformed $p$-value $p_{mi}^r$, which is thinner under heavy dependence while heavier under weak dependence.  \thmref{r-1strong} (b), following the sparse setting~\citep{donoho2004higher}, states that the asymptotic power will achieve one with probability one, as long as the signal strength $c$ and signal sparsity $\gamma$ achieve the detection boundary defined by $\rho$: $\sqrt{c} > 1-\sqrt{\gamma(1-\rho)}$. Note that the detection boundary in (b) grows with $\rho$, which indicates that the signal needs to be stronger/denser to achieve the sweet spot under heavier dependence.

\section{Experiments}\label{sec:exp}
In \secref{calibration} we derive theoretical results of local test $t_{\alpha}^r$ \eqref{tralpha} under equicorrelated Gaussian model (\defref{modeldef}) in an asymptotic setting ($m\to\infty$), which indicate behaviors that positive $r$ works better for heavy dependence and weak dense signals, while negative $r$ works better for weak dependence and strong sparse signals. In this section, we empirically present the evidence that the above behaviors of local test are largely preserved after going through closed testing.

\begin{figure}[H]
	\includegraphics[width=\textwidth]{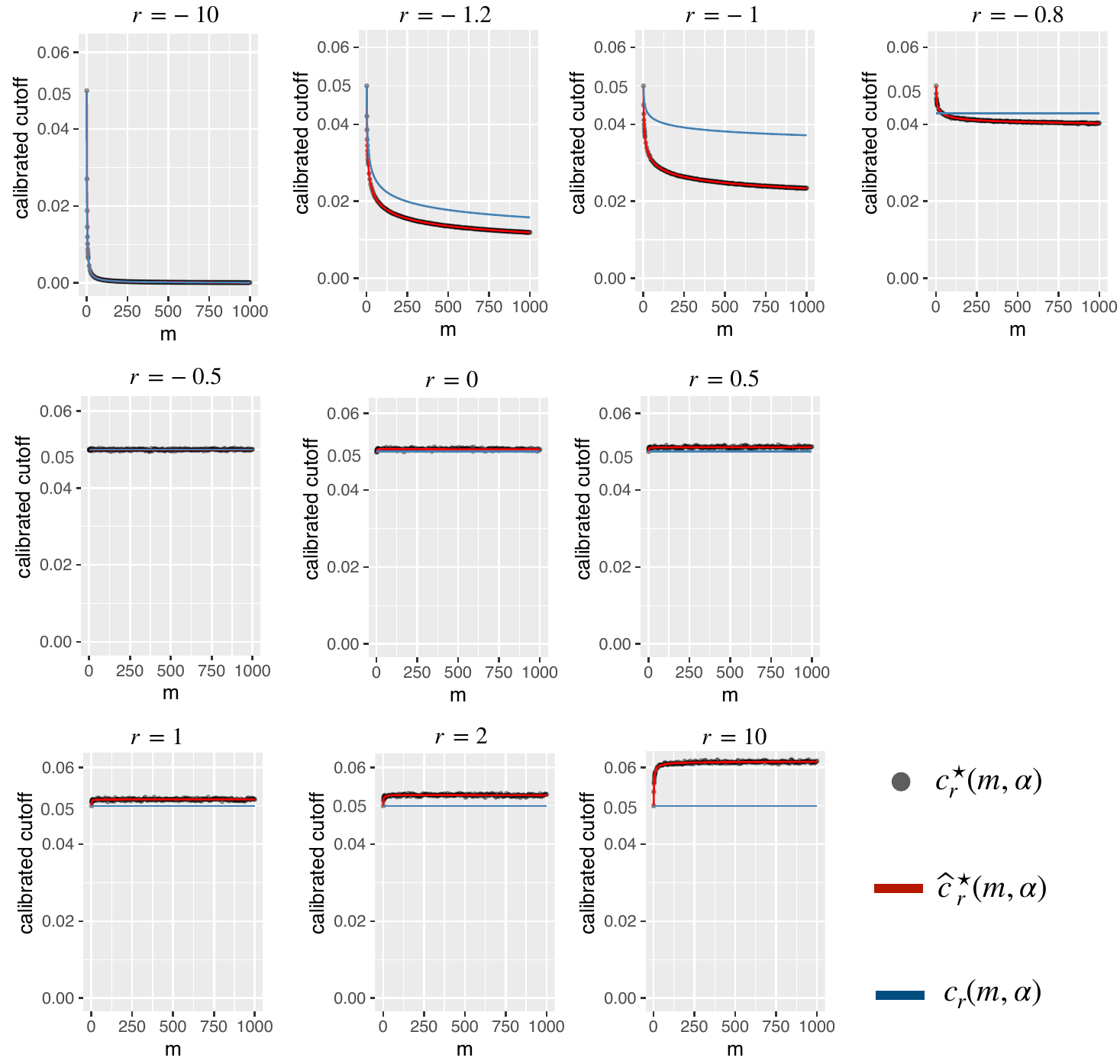}
	\caption{Calibration under $\rho$-equicorrelated Gaussian for $m\in[1000]$ for a worst case $\rho \in [0,1]$ (calculated using a grid of width 0.01). We compute the empirical $\alpha$ level calibrated cutoff $c_r^{\star}(m,\alpha)$ with $\alpha=0.05$ (black dots) for $(\frac1m \sum_{i=1}^m p_{i}^r)^{\frac1r}$ with grid points $m \in \{1,2,\dots,10,15,20,\dots,1000\}$ and $r\in\{-10,-1.2,-1,-0.8,-0.5,0,0.5,1,2,10\}$ via averaging over $10^6$ trials. Then we approximate $c_r^{\star}(m,\alpha)$ for all $m\in[1000]$ via fitting a smooth line $\widehat{c}^{\star}_{m}(\alpha,r)^{\frac{1}{r}}$ (red line) for the whole $m\in[1000]$, and use the fitted value as our final calibration. As for comparison, we also plot the theoretical calibrated cutoff $c_r(m,\alpha)$ (see definition \eqref{Cm}) derived for pointwise asymptotic type-I control in \secref{calibration}. In addition, for small $m$ ($1\sim10$), we just use empirical calibration for accurateness.}\label{fig:calfit}
\end{figure}

As the theoretical results for local test in \secref{calibration} are only asymptotic, while in closed testing, we need to consider all subsets of $[m]$; henceforth, our theoretical results will not be applicable for a large proportion of them. On the other hand, calibrating for subsets of size $1$ to $m$ is computationally expensive; therefore, we use the following approximation, that is calibrating for a few sizes in $[m]$ and then interpolating to the whole $[m]$ (see \figref{calfit} for the case when $m=1000$, $\alpha=0.05$). This empirical calibration with interpolation works well in terms of maintaining correct error control and gives nontrivial power (see \figref{fdpsig}--\ref{fig:fwercorr}).

\begin{remark}
Figure~\ref{fig:calfit} provides reasonable evidence that for $r \geq 1$, the worst-case dependence is not achieved by $\rho=1$ (perfect correlation) because if that was the case, there would be no violation of type-I error, but we observe above that the achieved error is larger than $\alpha=0.05$.
\end{remark}

In the rest of this section, we consider $m = 200$ tests, each based on different samples which are not necessarily independent, particularly, we assume the positively equicorrelated Gaussian model (\defref{modeldef}) among samples of the $m$ tests and test whether a given set of data has zero mean. In particular, we consider $\mu_m \equiv \mu$ for all $m$, and $\pi_m \equiv \pi$ for all $m$. 
In \figref{fdpsig}--\ref{fig:fwercorr} we investigate the algorithms presented in \thmref{gammaselection}, that is Algorithm~\ref{shortcuts-fdp} for finding the largest subset of $m$ such that FDP is controlled under $\gamma$, and Algorithm~\ref{shortcuts} for finding the largest subset of $[m]$ such that FWER is controlled under $\alpha$. Specifically, \figref{fdpsig}--\ref{fig:fdpcorr} show the results for finding the largest subset of $[m]$ with FDP controlled under $\gamma=0.2$, and \figref{fwersig}--\ref{fig:fwercorr} show the results for finding the largest subset of $[m]$ with FWER controlled under $\alpha=0.05$. We can see that, both FDP and FWER are controlled as we wanted, while $r<0$ often has non-trivial power (close to one) comparing with $r\geq 0$ when signals are strong enough (i.e. $\mu$ large enough), while they are both powerless otherwise  (i.e. $\mu$ not large enough). For weak signals specifically, we observe that $r>0$ have higher power comparing to $r<0$, especially under strong dependence (i.e. $\rho \gg 0$) and high signal density (i.e. $\pi \gg 0$). These findings generally agree with our asymptotic theory for local test in \secref{calibration}, that is $r<0$ achieves almost perfect power under setting with sparse strong signals, but is powerless when signals are not strong enough, in which case $r>0$ works better (especially given heavy dependence and dense signals). 

\begin{figure}[H]
    \includegraphics[width=\textwidth]{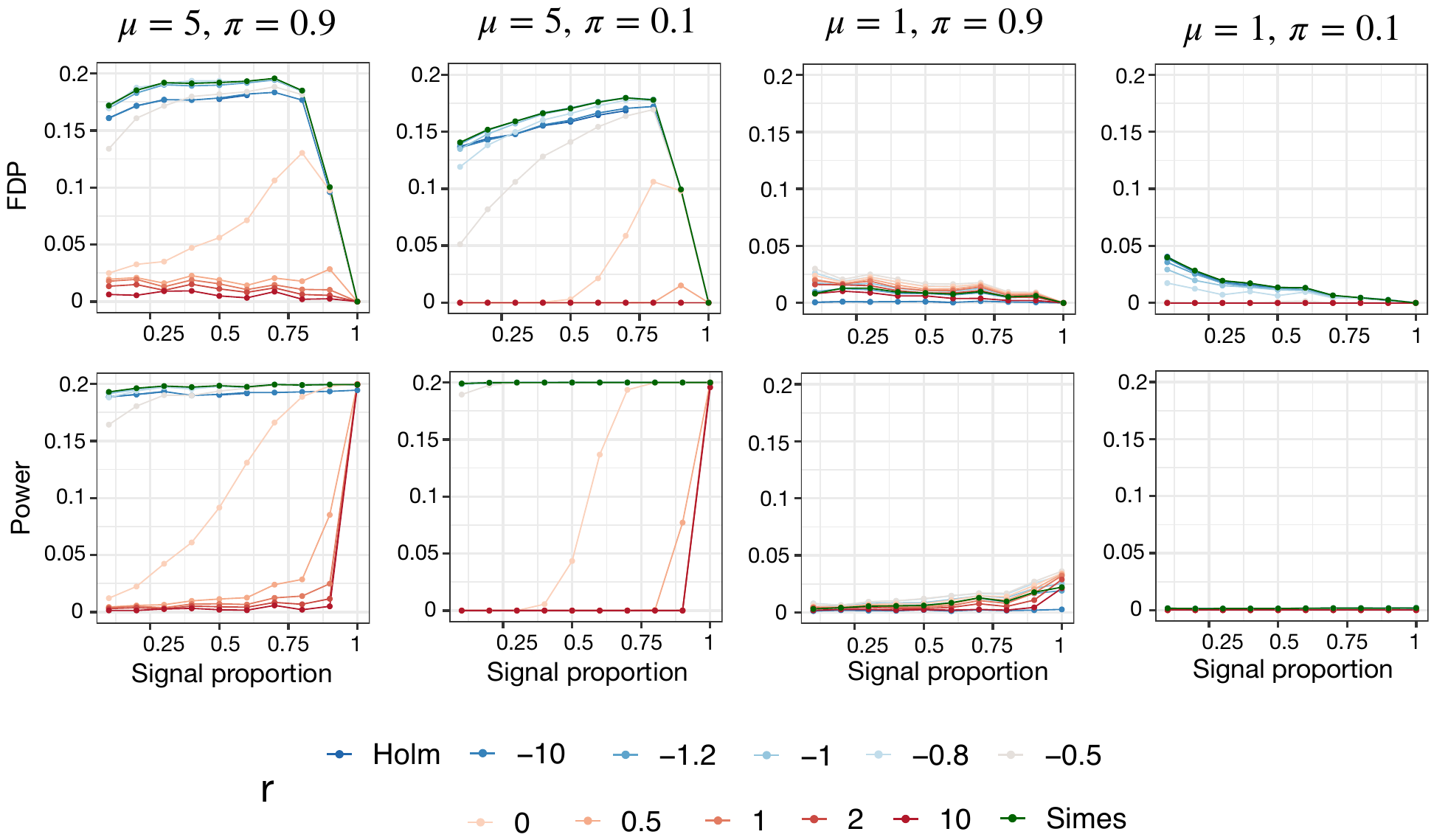}
	\caption{The empirical FDP and power versus the signal proportion $\pi$ under different settings using fitted calibration in \figref{calfit} and algorithm in \thmref{gammaselection}, with $\alpha=0.05$, $\gamma=0.2$, $m=200$, averaging over 1000 trials. }\label{fig:fdpsig}
\end{figure}

\begin{figure}[H]
    	\includegraphics[width=\textwidth]{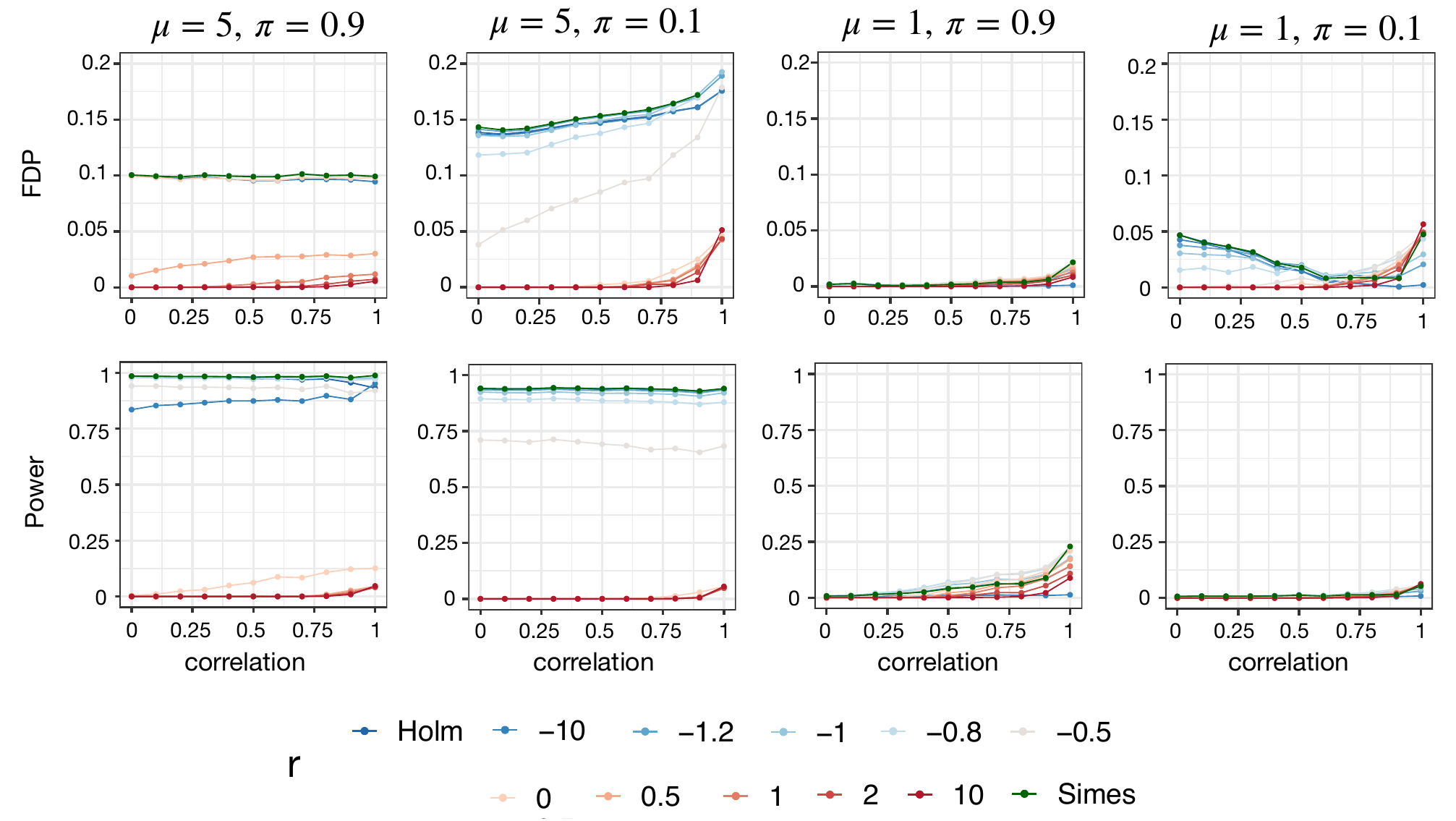}
	\caption{The empirical FDP and power versus correlation $\rho$ under different settings using fitted calibration in \figref{calfit} and algorithm in \thmref{gammaselection}, with $\alpha=0.05$, $\gamma=0.2$, $m=200$, averaging over 1000 trials.}\label{fig:fdpcorr}
\end{figure}

\begin{figure}[H]
\centering
	\includegraphics[width=\textwidth]{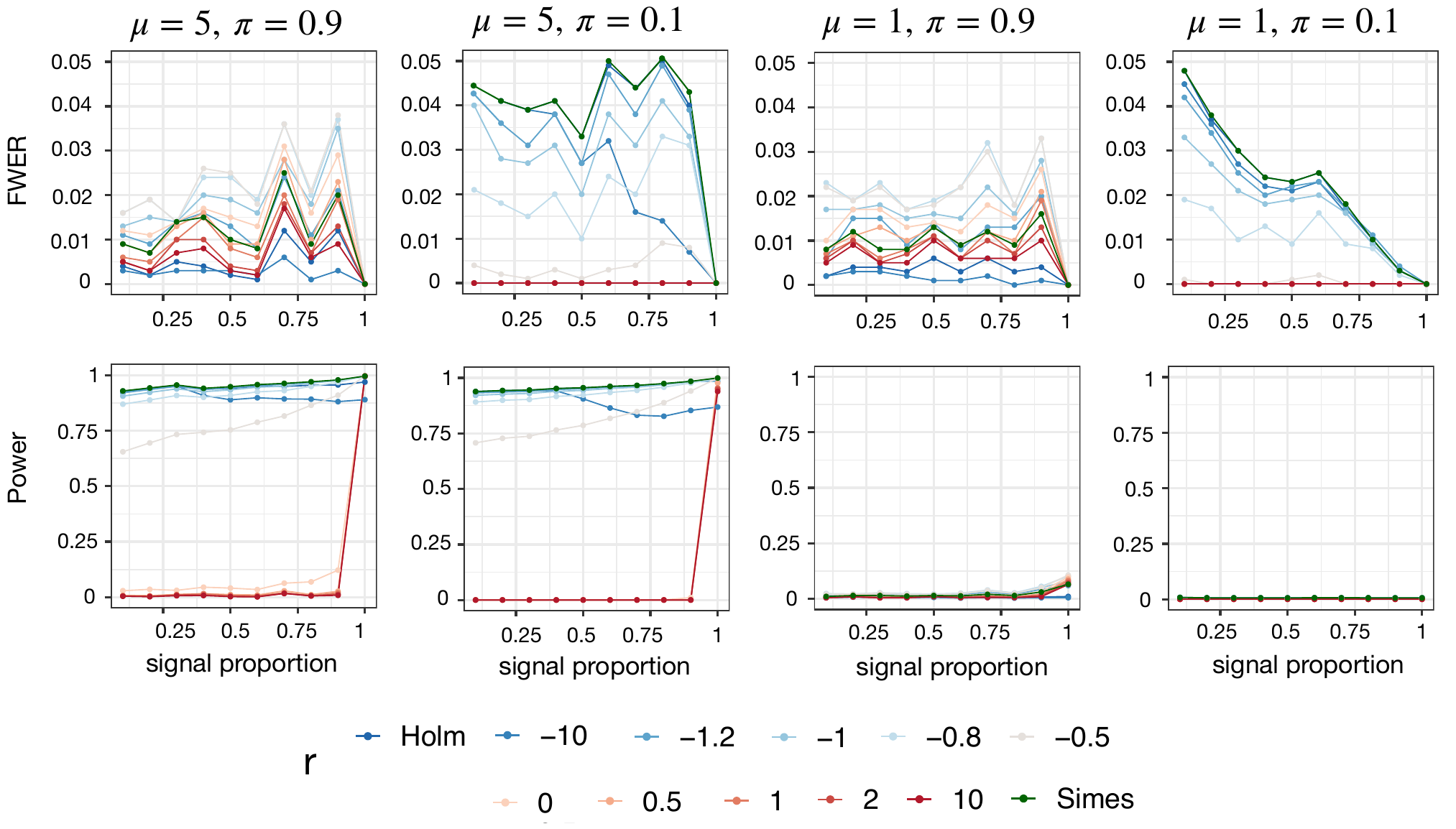}
	\caption{The empirical FWER and power versus signal proportion $\pi$ under different settings using fitted calibration in \figref{calfit} and algorithm in \thmref{gammaselection}, with $\alpha=0.05$, $m=200$, averaging over 1000 trials. }\label{fig:fwersig}
\end{figure}

\begin{figure}[H]
\centering
	\includegraphics[width=\textwidth]{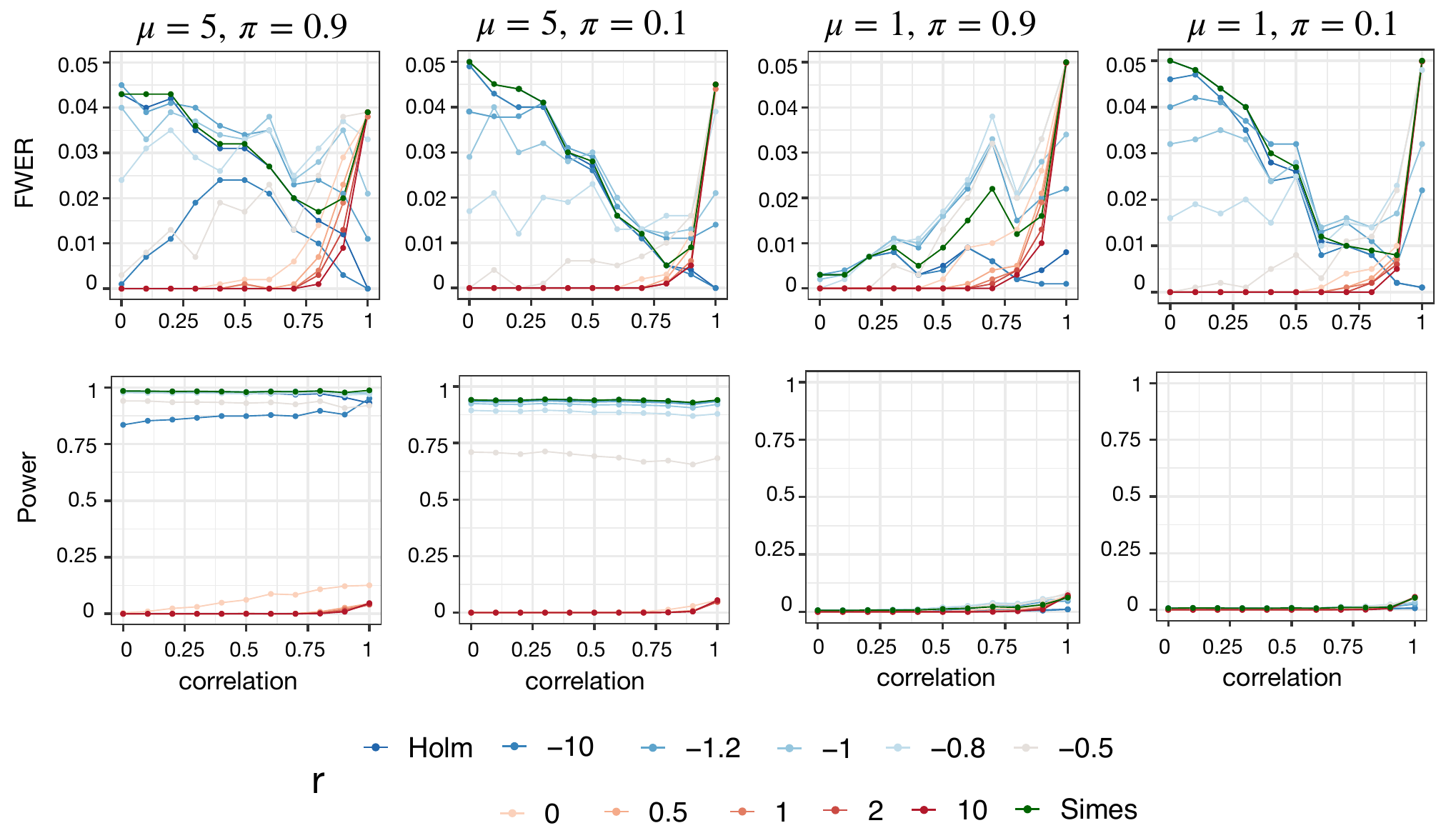}
	\caption{The empirical FWER and power versus correlation $\rho$ under different settings using fitted calibration in \figref{calfit} and algorithm in \thmref{gammaselection}, with $\alpha=0.05$, $m=200$, averaging over 1000 trials. }\label{fig:fwercorr}
\end{figure}

\section{Conclusion}\label{sec:con}
In this paper, we investigate the general case of closed testing with local tests that adopt a special property we called \emph{separability}, that is, the test is a function of summation of test scores for the individual hypothesis. With \emph{separability}, \emph{symmetry} and \emph{monotonicity} in local tests, we derive a class of novel, fast algorithms for various types of simultaneous inference. {\color{black}These algorithms have been implemented in the R package \texttt{sumSome}\footnote{\url{https://github.com/annavesely/sumSome}}.} We pair our algorithms with recent advances in separable global null test, i.e. the generalized mean based methods summarized \citep{vovk2020combining}, and obtain a series of simultaneous inference methods that are sufficient to handle many complex dependence structures and signal compositions. We provide guidance on choosing from these methods adaptively via theoretical investigation of the conservativeness and sensitivity for different choices of local tests in an equicorrelated Gaussian model. Specifically, we found that {\color{black} within the family of simultaneous inference methods using local tests introduced in \eqref{Talpha}:}
\begin{itemize}
    \item when signals are weak, all methods are powerless, while the ones with positive $r$ perform a bit better when signals are dense and highly correlated.
    
    \item when signals are strong, methods with negative $r$ are often able to achieve full power, and methods with positive $r$ are often still powerless; except in the case when signals are also dense, they are comparable.
\end{itemize}

{\color{black}We leave the following problems for future work. First, we note that the surrogate type-I error in \eqref{alpha} sometimes does not agree very well with the true type-I error in simulations (see the discussion of \figref{smallalpha}). We think this arises from the fact that the surrogate type-I error is based on asymptotic approximation. That is, the number of hypotheses $m$ goes to infinity. Meanwhile, we have only tried limited dimensions empirically (i.e., $m=10^5$). We think that the story will be more coherent when $m$ is much larger because we suspect that the convergence occurs very slowly. Nevertheless, how to derive tight and efficient calibration explicitly for small $m$ may be worth more attention. Secondly, in this work, we mainly focus on the equicorrelated Gaussian case, while the derivation of a tight calibration under arbitrarily correlated Gaussians will be intriguing,  though much harder. Finally, the theoretical power analysis is conducted only for the local test; formal theoretical analysis after closure would be desirable, though we expect that to be much harder.}

\bibliographystyle{plainnat}
\bibliography{main}

\begin{thebibliography}{23}
\providecommand{\natexlab}[1]{#1}
\providecommand{\url}[1]{\texttt{#1}}
\expandafter\ifx\csname urlstyle\endcsname\relax
  \providecommand{\doi}[1]{doi: #1}\else
  \providecommand{\doi}{doi: \begingroup \urlstyle{rm}\Url}\fi

\bibitem[Blanchard et~al.(2020)Blanchard, Neuvial, and
  Roquain]{blanchard2020post}
Gilles Blanchard, Pierre Neuvial, and Etienne Roquain.
\newblock Post hoc confidence bounds on false positives using reference
  families.
\newblock \emph{Annals of Statistics}, 48\penalty0 (3):\penalty0 1281--1303,
  2020.

\bibitem[Chen et~al.(2020)Chen, Liu, Tan, and Wang]{chen2020trade}
Yuyu Chen, Peng Liu, Ken~Seng Tan, and Ruodu Wang.
\newblock Trade-off between validity and efficiency of merging $p$-values under
  arbitrary dependence.
\newblock \emph{arXiv preprint arXiv:2007.12366}, 2020.

\bibitem[Dobriban(2020)]{dobriban2020fast}
Edgar Dobriban.
\newblock Fast closed testing for exchangeable local tests.
\newblock \emph{Biometrika}, 107\penalty0 (3):\penalty0 761--768, 2020.

\bibitem[Donoho and Jin(2004)]{donoho2004higher}
David Donoho and Jiashun Jin.
\newblock Higher criticism for detecting sparse heterogeneous mixtures.
\newblock \emph{The Annals of Statistics}, 32\penalty0 (3):\penalty0 962--994,
  2004.

\bibitem[Fisher(1992)]{fisher1992statistical}
Ronald~A Fisher.
\newblock Statistical methods for research workers.
\newblock In \emph{Breakthroughs in Statistics}, pages 66--70. Springer, 1992.

\bibitem[Genovese and Wasserman(2006)]{genovese2006exceedance}
Christopher~R Genovese and Larry Wasserman.
\newblock Exceedance control of the false discovery proportion.
\newblock \emph{Journal of the American Statistical Association}, 101\penalty0
  (476):\penalty0 1408--1417, 2006.

\bibitem[Gnedenko and Kolmogorov(1954)]{key0062975m}
B.~V. Gnedenko and A.~N. Kolmogorov.
\newblock \emph{Limit distributions for sums of independent random variables}.
\newblock Addison-Wesley Mathematics Series. Addison-Wesley, Cambridge, MA,
  1954.

\bibitem[Goeman and Solari(2011)]{goeman2011multiple}
Jelle~J Goeman and Aldo Solari.
\newblock Multiple testing for exploratory research.
\newblock \emph{Statistical Science}, 26\penalty0 (4):\penalty0 584--597, 2011.

\bibitem[Goeman et~al.(2019)Goeman, Meijer, Krebs, and
  Solari]{goeman2019simultaneous}
Jelle~J Goeman, Rosa~J Meijer, Thijmen~JP Krebs, and Aldo Solari.
\newblock Simultaneous control of all false discovery proportions in
  large-scale multiple hypothesis testing.
\newblock \emph{Biometrika}, 106\penalty0 (4):\penalty0 841--856, 2019.

\bibitem[Goeman et~al.(2021)Goeman, Hemerik, and Solari]{goeman2019only}
Jelle~J Goeman, Jesse Hemerik, and Aldo Solari.
\newblock Only closed testing procedures are admissible for controlling false
  discovery proportions.
\newblock \emph{The Annals of Statistics}, 49\penalty0 (2):\penalty0
  1218--1238, 2021.

\bibitem[Gordon(1941)]{gordon1941values}
Robert~D Gordon.
\newblock Values of \uppercase{M}ills' ratio of area to bounding ordinate and
  of the normal probability integral for large values of the argument.
\newblock \emph{The Annals of Mathematical Statistics}, 12\penalty0
  (3):\penalty0 364--366, 1941.

\bibitem[Katsevich and Ramdas(2020)]{katsevich2020simultaneous}
Eugene Katsevich and Aaditya Ramdas.
\newblock Simultaneous high-probability bounds on the false discovery
  proportion in structured, regression and online settings.
\newblock \emph{Annals of Statistics}, 48\penalty0 (6):\penalty0 3465--3487,
  2020.

\bibitem[Liu and Xie(2020)]{liu2020cauchy}
Yaowu Liu and Jun Xie.
\newblock Cauchy combination test: a powerful test with analytic $p$-value
  calculation under arbitrary dependency structures.
\newblock \emph{Journal of the American Statistical Association}, 115\penalty0
  (529):\penalty0 393--402, 2020.

\bibitem[Marcus et~al.(1976)Marcus, Eric, and Gabriel]{marcus1976closed}
Ruth Marcus, Peritz Eric, and K~Ruben Gabriel.
\newblock On closed testing procedures with special reference to ordered
  analysis of variance.
\newblock \emph{Biometrika}, 63\penalty0 (3):\penalty0 655--660, 1976.

\bibitem[R{\"u}schendorf(1982)]{ruschendorf1982random}
Ludger R{\"u}schendorf.
\newblock Random variables with maximum sums.
\newblock \emph{Advances in Applied Probability}, pages 623--632, 1982.

\bibitem[Stouffer et~al.(1949)Stouffer, Suchman, DeVinney, Star, and
  Williams~Jr]{stouffer1949american}
Samuel~A Stouffer, Edward~A Suchman, Leland~C DeVinney, Shirley~A Star, and
  Robin~M Williams~Jr.
\newblock The \uppercase{A}merican soldier: adjustment during army life.
  (\uppercase{S}tudies in social psychology in \uppercase{W}orld
  \uppercase{W}ar \uppercase{II}).
\newblock 1949.

\bibitem[Uchaikin and Zolotarev(2011)]{uchaikin2011chance}
Vladimir~V Uchaikin and Vladimir~M Zolotarev.
\newblock \emph{Chance and stability: stable distributions and their
  applications}.
\newblock Walter de Gruyter, 2011.

\bibitem[Vesely et~al.(2021)Vesely, Finos, and Goeman]{vesely2021permutation}
Anna Vesely, Livio Finos, and Jelle~J Goeman.
\newblock Permutation-based true discovery guarantee by sum tests.
\newblock \emph{arXiv preprint arXiv:2102.11759}, 2021.

\bibitem[Vovk and Wang(2020)]{vovk2020combining}
Vladimir Vovk and Ruodu Wang.
\newblock Combining p-values via averaging.
\newblock \emph{Biometrika}, 107\penalty0 (4):\penalty0 791--808, 2020.

\bibitem[Vovk et~al.(2022)Vovk, Wang, and Wang]{vovk2022admissible}
Vladimir Vovk, Bin Wang, and Ruodu Wang.
\newblock Admissible ways of merging p-values under arbitrary dependence.
\newblock \emph{The Annals of Statistics}, 50\penalty0 (1):\penalty0 351--375,
  2022.

\bibitem[Wilson(2019)]{wilson2019harmonic}
Daniel~J Wilson.
\newblock The harmonic mean $p$-value for combining dependent tests.
\newblock \emph{Proceedings of the National Academy of Sciences}, 116\penalty0
  (4):\penalty0 1195--1200, 2019.

\bibitem[Wilson(2020)]{wilson2020generalized}
Daniel~J Wilson.
\newblock Generalized mean $p$-values for combining dependent tests: comparison
  of generalized central limit theorem and robust risk analysis.
\newblock \emph{Wellcome Open Research}, 5\penalty0 (55):\penalty0 55, 2020.

\bibitem[Wilson(2021)]{wilson2021evy}
Daniel~J Wilson.
\newblock The \uppercase{L}\'evy combination test.
\newblock \emph{arXiv preprint arXiv:2105.01501}, 2021.

\end{thebibliography}

\begin{appendices}
\renewcommand{\thesection}{\Alph{section}}
\setcounter{section}{0}

\setcounter{equation}{39}
\section{Proof for \thmref{adjustedp}}\label{app:pfadjustedp}


 

We have $\ta{S} = 1$ if and only if $p(J) \leq \alpha$ for all $S \subseteq J \subseteq [m]$, which happens if and only if $\alpha \geq \max_{S \subseteq J \subseteq [m]} p(J)$. Therefore,
\[
\overline{p}(S) = \max_{S \subseteq J \subseteq [m]} p(J).
\]
By monotonicity, we have that if $J \supseteq S$ then $p(J) \geq p(S \cup J^\star_{|J|-|S|})$, where $J_i^\star$, is the set of the indices of the $i$ largest $p$-values in $S^c$. Therefore, 
\[
\overline{p}(S) = \max_{|S| \leq i \leq m} p(S \cup J^\star_{i-|S|})
= \max_{0 \leq i \leq |S^c|} p(S \cup J^\star_i).
\]
Since $J^\star_0 \subseteq \ldots \subseteq J^\star_{|S^c|}$, it is clear that this expression can be calculated in $O(|S^c|) = O(m)$ time.  
\qed

\section{A more general framework of local tests design}\label{app:compdef}
We start with defining some terminology. Recall that a local test is an indicator function of whether to reject $S$ or not:
\begin{equation}
t_{\alpha}(S):\quad  \mathbb{R}^{|S|} \to \{0,1\}.
\end{equation}
We consider $t_{\alpha}$ of the following form:
\begin{equation}\label{globalsym}
    t_{\alpha}(S) = \ones\{f(|S|;
    (T_i)_{i \in S} 
    ) \leq \alpha\},
\end{equation}
where the function $f$ depends on the size of $S$ and the vector of scores.
Given local tests of this form, there are two commonly satisfied conditions --- \emph{symmetry} and \emph{monotonicity} --- which makes the computation manageable: quadratic time shortcuts for simultaneous FDP inference and FWER control have been developed by \citet{goeman2011multiple} and  \citet{dobriban2020fast} respectively under  these two conditions.
\begin{condition}\label{cond:mono}
\textbf{(Monotonicity)} A local test of form \eqref{globalsym} is called monotonic if for any $s\geq 1$, any two sets of scores $(T_1, \dots, T_s)$ and $(T_1',\dots, T_s')$ with $T_i \leq T_i'$ for all $i=1,\dots,s$, we have
\begin{equation}
    f(s; T_1, \dots, T_s ) \geq f(s; T_1',\dots, T_s').
\end{equation}
\end{condition}
\emph{Monotonicity} is a reasonable requirement for a local test. Several well-known global null tests are monotonic, for example, Fisher's test, Simes' test, Bonferroni test, etc; as well as the tests based on generalized means \cite{vovk2020combining}.

\begin{condition}\label{cond:sym}
\textbf{(Symmetry)} A local test $t_{\alpha}$ of form \eqref{globalsym} is called symmetric if for any $s\geq 1$, any set of scores $(T_1, \dots, T_s)$, and any permutation $(i_1,\dots,i_s)$ of $(1,\dots,s)$, we have
\begin{equation}
    f(s; T_1, \dots, T_s) = f(s; T_{i_1}, \dots, T_{i_s}).
\end{equation}
\end{condition}

Another condition that we find could further reduce computation time is \emph{separability}. 
This condition is relatively under-emphasized in the past closed testing literature; however, it has a long history in the global null testing literature with an increased recent interest \citep{vovk2020combining, wilson2019harmonic,chen2020trade}.


\begin{condition}
\label{cond:sep}
\textbf{(Separability)} A local test of form \eqref{globalsym} is called separable if for $s\geq 1$, and a set of scores $(T_1, \dots, T_s)$, there exists a series of transformation functions on $\mathbb{R}$ $\{h_i\}_{j=1}^s$ and a function $g$ on $\mathbb{R}^2$ such that
\begin{equation}\label{sep}
t_{\alpha}(s; T_1, \dots, T_s) = \ones\left\{ \sum_{j=1}^{s} h_j(T_{j}) \leq g(|S|,\alpha)\right\}.
\end{equation}
\end{condition}
 Recall the class of local tests $T_{\alpha}$ defined in \eqref{Talpha} of the main paper. It is easy to check that each of its element $t_{\alpha}^{(r)}$ is \emph{monotonic}, \emph{symmetric} for all $r$, and \emph{separable} iff $r\neq \pm \infty$.
\begin{remark}
A local test $t_{\alpha}$ of form \eqref{globalsym} with both \emph{symmetry} and \emph{separability} must admit the following form:
\begin{equation}
t_{\alpha}(S) = \ones\left\{ \sum_{i=1}^{|S|} h(T_{s_i}) \leq g(|S|,\alpha)\right\},
\end{equation}
that is the transformation functions in the summation are the same for each hypothesis.
\end{remark}

\section{Proof for \thmref{shortcut}}\label{app:shortcutpf}
Without loss of generality, we assume that all the scores (e.g. $p$-values) are already sorted in a descending order, that is \smash{$T_1 \geq T_2 \geq \dots \geq T_m$}. Denote $S = \{i_1, i_2, \dots, i_s\}$, with $1\leq s\leq m$ and $i_1 < i_2, <\dots< i_s$; and $S^{c} = \{j_1, j_2, \dots, j_{m-s}\}$ as the complement set of $S$, with $j_1 < j_2, <\dots< j_{m-s}$. 

     


To prove the validity of Algorithm~\ref{shortcuts-fdp}, we first focus on the crucial line 9, and claim that when event 
\begin{equation}\label{line9}
\mathcal{E}:= \{\textnormal{ line 9 is evaluated with } k \leq \min\{s,a\}\}
\end{equation}
happens, the following four statement are true.
\begin{itemize}
    \item[(i)] $Q  = \sum_{j=1}^{k+b} u_j + \sum_{l=1}^{a-k-b} v_l;$
    
    \item[(ii)] $b  \geq 0;$
    
    \item[(iii)] $v_{a-k-b} \geq u_{k+b+1};$
    
    \item[(iv)] $u_{k+b} \geq v_{a-k-b+1}, \quad \text{if}\ b > 0$.
\end{itemize}
Note that  $\sum_{l=1}^0 v_l$ is defined to be 0, and not $v_1+v_0$; and $\sum_{j=1}^0 u_j$ is defined to be 0, and not $u_1+u_0$. 

We show this by induction. The first time event $\mathcal{E}$ defined in \eqref{line9} happens, we have $k=a=1, b=0$ and $Q = u_1$, so (i) and (ii) hold, (iii) holds since $v_0 \geq u_2$, and for (iv), since $b=0$ there is nothing to prove. 

{\color{black} Additionally, we need to prove that when $b>0$ for the first time, (iv) is satisfied. Note that the only way that $b>0$ for the first time, is through the satisfaction of the condition in line 3, specifically $u_{k+b_0+1}\geq v_{a-k-b_0}$ with $b_0=0$. The reason is that $a$ always increases by one in each while-loop of line 2, and after the first while-loop, $b$ can be at most 0, and we have $a=2$. Therefore under the condition that $u_{k+b_0+1}\geq v_{a-k-b_0}$ with $b_0=0$, the algorithm will go through line 5, and has $b=b_0+1>0$, and $u_{k+b}\geq v_{a-k-b+1}$. That is (iv) is satisfied, when $b$ satisfies $b>0$ for the first time.}

Now assume that (i)-(iv) hold the previous time the event $\mathcal{E}$ happens. Let $a_0, k_0, b_0$ and $Q_0$ be the value of $a,k,b$ and $Q$ during that previous step. There are five routes for event $\mathcal{E}$ to happen again, which we can characterize by the way $a,k,b$ are updated. We will discuss these routes one by one.

\begin{itemize}
    \item[1.] \textbf{Line} $\bm{9 \to 10 \to 11 \to 15 \to \mathcal{E}}$. In this case we update $a=a_0; b=b_0 -1; k=k_0+1$. We have $b \geq 0$ since $b_0 > 0$. We have $Q =Q_0$, and (i) holds since $k+b = k_0+b_0$. By the induction hypothesis,
    \[
    v_{a-k-b} = v_{a_0 -k_0 -b_0} \geq u_{k_0+b_0+1} = u_{k+r+1}. 
     \]
     If $b>0$, then also $b_0>0$, so, by the induction hypothesis,
     \[
    u_{k+b} = u_{k_0 + b_0} \geq v_{a_0 -k_0 -b_0+1} =v_{a -k -b+1}. 
     \]
     
     \item[2.] \textbf{Line} $\bm{9 \to 10 \to 13 \to 15 \to \mathcal{E}}$. In this case we update $a=a_0$; $b=b_0=0$; $k=k_0+1$. Clearly, (ii) holds. We have 
     \[
     Q = \sum_{j=1}^{k_0+1} u_j + \sum_{l=1}^{a_0-k_0-1} v_l,
     \]
     which reduces to (i). By the induction hypothesis,
     \[
     v_{a-k-b} = v_{a_0-k_0-b_0-1} \geq v_{a_0-k_0-b_0} \geq u_{k_0+b_0+1} = u_{k+b} \geq u_{k+b+1},
     \]
     so (iii) follows. Since $b=0$ there is nothing to prove (iv).
     
     \item[3.] \textbf{Line} $\bm{9 \to 2 \to 3 \to 4,5 \to \mathcal{E}}$. In this case we update $a=a_0+1; b=b_0+1; k=k_0$. We get (ii) from the induction assumption since $b=b_0+1\geq 1$. We obtain (i) since 
     \[
     Q = \sum_{j=1}^{k_0+b_0+1} u_j + \sum_{l=1}^{a_0-k_0-b_0}v_l.
     \]
     By the induction hypotheses,
     \[
     v_{a-k-b} = v_{a_0-k_0-b_0} \geq u_{k_0+b_0+1} = u_{k+b} \geq u_{k+b+1}.
     \]
     Also, $b>0$ and 
     \[
     u_{k+b} = u_{k_0+b_0+1} \geq v_{a-k_0-b_0} = v_{a-k-b+1}. 
     \]
     
     \item[4.] \textbf{Line} $\bm{9 \to 2 \to 3 \to 7 \to \mathcal{E}}$. We update $a = a_0+1$, $k=k_0$ and $b=b_0$. We get (ii) trivially, and (i) since 
     \[
     Q = \sum_{j=1}^{k_0+b_0} u_j + \sum_{l=1}^{a_0-k_0-b_0+1} v_l.
     \]
     We have (iii) since 
     \[
     v_{a-k-r} = v_{a-k_0-b_0} \geq u_{k_0+b_0+1} = u_{k+b+1}.
     \]
     By the induction hypothesis, we get (iv), since, 
     \[
     u_{k+b} = u_{k_0+b_0} \geq v_{a_0-k_0-b_0+1} = v_{a-k-b} \geq v_{a-k-b+1}.
     \]
     
     \item[5.] \textbf{Line} $\bm{9 \to 10 \to 13 \to 15 \to 9 \to 2 \to 7 \to \mathcal{E}}$. First we use proof by contradiction that this case happens only if $k_0 = \min\{a_0,s\}$: {\color{black} Otherwise, for step ``$\bm{9 \to 2}$'' to happen in the route, we need $Q_0+u_{k_0+1}-v_{a_0-k_0}\leq C(a_0,\alpha)$ to break the while loop in line 9. This cannot happen since we need $b_0=0$ to reach line 13 in this route, which indicates $b_0$ was not updated in line 5 (otherwise $b_0>0$ from our induction hypothesis (ii)), i.e.  $u_{k_0+b_0+1}> v_{a_o-k_0-b_0}$ in line 3. Therefore we in fact have $Q_0+u_{k_0+1}-v_{a_0-k_0}>Q_0>C(a_0,\alpha)$, which is a contradiction to what we require.} 
     
     Consequently, we update $a=a_0+1$, $k=k_0+1$, and, since $u_{k+1} \leq v_0$ by definition of $v_0$, $b=b_0=0$. So first (ii) holds, and also we get (i) since 
     \[
     Q = \sum_{j=1}^{k_0} u_j + \sum_{l=1}^{a_0-k_0} v_l +u_{k_0+1}-v_{a_0-k_0} + v_{a-k} = \sum_{j=1}^{k} u_j + \sum_{l=1}^{a-k} v_l =\sum_{j=1}^{k+b} u_j + \sum_{l=1}^{a-k-b} v_l.
     \]
     Moreover, since $a=k+b$ and $b=0$, we have (iii) because $v_{a-k-b} = v_0\geq u_1 \geq u_{k+b+1}.$ There is nothing to prove (iv) since $b=0$.
    
\end{itemize}

Since we have exhausted the possibilities to get from one happening of event $\mathcal{E}$ to the next, the above analysis proves (i)-(iv). It follows from (i)-(iv) that, in line 9, $Q = W_{a,k}$, where 
\[
W_{a,k} = \max\{ Q_{I}: |I \cap S|\geq k, |I| = a\}.
\]
To see why this is true, note that (i) $Q$ is a sum of $a$ terms, of which at least $k$ terms are from $S$. The sum is the largest possible such sum since the $k$ largest scores in $S$ are used, and by (iii) and (iv), of the $a-k$ remaining scores, the smallest score that is included in the sum is larger than or equal to the largest score that is not included. Note that, if $k \leq k'$, $W_{a,k} \geq W_{a,k'}$.

Now, suppose $\overline{e}_{\alpha}(S) = e > 0$. Then there exists some $I \subseteq S$ with $|I| = e$ and some $J \supseteq I$ such that $Q_J > g(|J|,\alpha)$. In the algorithm, if $a = |J|$ and $k \leq e$, we have $Q = W_{a,k} \geq W_{a,e}\geq Q_J > g(|J|,\alpha)$, so the algorithm enters the \emph{while} loop in line 9, incrementing $k$ while keeping $a$ fixed. This step is repeated at least until $k \geq e+1$. Since $k$ is non-decreasing in the steps of the algorithm, it returns $k-1 \geq e$. The same holds trivially if $e=0$. 

If $\overline{e}_{\alpha}(S) = e < s$, then for every $I \subset S$ with $|I| > e$ we have $\overline{t}_{\alpha}(I) = 1$, so for all $J \supseteq I$, we have $Q_J \leq g(|J|, \alpha)$. In particular, this holds for the worst case set, so for every $e+1\leq a \leq m$, we have $W_{a,e+1} \leq g(a,\alpha)$. If $k=e+1$, therefore, the algorithm never enters the \emph{while} loop in line 9, and consequently never increments $k$ further. The algorithm therefore ends with $k \leq e+1$ and returns $k-1 \leq e$. The same holds trivially if $e=s$. 

To sum up, since $k-1 \geq e$ and $k-1 \leq e$, we have $k-1 = e$.  
\\

{\color{black} Finally, we prove that the algorithm takes $O(m)$ time to run. First, note that there are $m$ for-loop iterations. In each for-loop iteration, it is obvious that apart from the while-loop, the algorithm takes constant time. For the while-loop part, we can additionally show that the total calculations that it takes over all the for-loop iteration is at most $s$. A key observation for the proof is that $k$ can only be updated through the while-loop part, and $k$ always increases by one each time going through one while-loop iteration. Since the global upper bound for $k$ is $s$, the number of total iterations for the while-loop over all for-loop iterations is at most $s$. Since each while-loop iteration also takes constant time, the algorithm takes at most $m+s$ steps of $O(1)$ calculation. In conclusion, the algorithm takes $O(m)$ time to run.}

\section{Discussion of consonance}\label{app:closedconso}
Firstly we formally state the definition of consonance.
\begin{definition}\label{def:conso}
\textbf{(Consonance)} A closed testing procedure is \emph{consonant} if the local tests for every composite hypothesis $S\in 2^{[m]}$ are chosen in such a way that rejection of $S$ after closure implies rejection of at least one of its elementary hypotheses after closure.
\end{definition}

\begin{lemma}\label{lem:closedconsonant}
 The closed testing procedure using local test $t_{\alpha}^{(r)}$ defined in \eqref{tralpha} is consonant if and only if $r = \pm{\infty}$.
\end{lemma}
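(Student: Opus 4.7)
The plan is to establish the two directions of the equivalence separately: for $r = \pm\infty$, consonance follows from the extremal nature of $M_r$ together with the fact that the \lemref{valid} calibration $c_r(|S|,\alpha)$ has the right monotonicity in $|S|$; for $r \in (-\infty,\infty)$, non-consonance is witnessed by an explicit counterexample exploiting the smoothness of $M_r$ in every coordinate.

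For $r = -\infty$, the combined statistic is $M_{-\infty}((p_i)_{i\in S}) = |S|\min_{i\in S}p_i$ and \lemref{valid} gives $c_{-\infty}(|S|,\alpha) = \alpha/|S|$, so the effective threshold on $\min_{i\in S}p_i$ is $\alpha/|S|^2$, which is non-increasing in $|S|$. Fix any $S$ with $\overline{t}_\alpha(S)=1$ and let $i^\star \in S$ attain $p_{i^\star}=\min_{i\in S}p_i$. I would show $\overline{t}_\alpha(\{i^\star\})=1$ as follows: for arbitrary $J'\ni i^\star$, set $J=J'\cup S\supseteq S$; by hypothesis $t_\alpha(J)=1$, giving $\min_{k\in J}p_k \leq \alpha/|J|^2$. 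The inclusion $i^\star\in J'$ implies $\min_{k\in J'}p_k \leq p_{i^\star} = \min_{k\in S}p_k$, so $\min_{k\in J}p_k = \min(\min_{k\in J'}p_k,\min_{k\in S}p_k) = \min_{k\in J'}p_k$. Combined with $|J'|\leq |J|$, this yields $\min_{k\in J'}p_k \leq \alpha/|J|^2 \leq \alpha/|J'|^2$, hence $t_\alpha(J')=1$.

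For $r=+\infty$, $M_\infty((p_i)_{i\in S}) = \max_{i\in S}p_i$ and $c_\infty(|S|,\alpha)=\alpha$ is constant in $|S|$. Thus $\overline{t}_\alpha(S)=1$ forces $t_\alpha([m])=1$, i.e., $\max_{k\in[m]}p_k\leq\alpha$. This single inequality already implies $t_\alpha(J)=1$ for every $J\subseteq[m]$, so $\overline{t}_\alpha(\{i\})=1$ for every $i\in S$ and consonance is immediate.

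For the reverse direction, I would exhibit $p$-values witnessing non-consonance for each $r\in(-\infty,\infty)$. The guiding intuition is that for finite $r$, $M_r$ depends smoothly on every coordinate, so a cluster of small $p$-values can collectively offset moderate ones while a single small $p$-value cannot. Concretely, take $k\geq 2$ indices with $p$-value $q$ (small) and $m-k$ indices with $p$-value $Q$ (moderate), tuning $q,Q,k,m$ to $r$ so that (i) every superset of $S=\{1,\dots,k\}$ has $M_r\leq c_r$, while (ii) some set $\{i\}\cup T$ with $i\in S$ and $T\subseteq\{k+1,\dots,m\}$ has $M_r>c_r$. Then $\overline{t}_\alpha(S)=1$, yet each $\overline{t}_\alpha(\{i\})=0$ is blocked by the corresponding witness of type (ii). A concrete minimal instance at $r=1$ with the \lemref{valid} threshold $c_1(n,\alpha)=\alpha/2$ is $m=3$, $p_1=p_2=0.04$, $p_3=0.07$, $\alpha=0.1$: here $t_\alpha(\{1,2\})=t_\alpha(\{1,2,3\})=1$ but $t_\alpha(\{1,3\})=t_\alpha(\{2,3\})=0$, so $\overline{t}_\alpha(\{1,2\})=1$ while $\overline{t}_\alpha(\{1\})=\overline{t}_\alpha(\{2\})=0$.

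The main obstacle is verifying the tuning (i)--(ii) for every $r\in(-\infty,\infty)$: the case $r=0$ requires $q>0$ to avoid $M_0=0$, and the regimes $r\leq -1$ versus $-1<r<0$ need separate bookkeeping because $\alpha_{r,m}$ in \lemref{valid} changes form at $r=-1$. In each regime, however, the tuning reduces to a one-parameter problem that is always solvable, because the ratio $M_r(q,Q,\dots,Q)/M_r(q,\dots,q,Q,\dots,Q)$ ranges continuously over $(1,Q/q)$ as the counts of $q$'s and $Q$'s vary, and the critical thresholds $c_r(n,\alpha)$ vary smoothly with $n$ in each of these regimes.
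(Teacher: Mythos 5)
Your forward direction ($r=\pm\infty$ implies consonance) is correct and essentially matches the paper's argument: for $r=\infty$ the constant threshold forces $t_\alpha([m])=1$ and hence local rejection of everything, and for $r=-\infty$ your observation that the minimizer $i^\star\in S$ satisfies $\min_{k\in J'}p_k=\min_{k\in J'\cup S}p_k$ for every $J'\ni i^\star$, combined with the monotonicity of the effective threshold in $|J|$, is a clean (arguably cleaner) version of the paper's case analysis. The exact power of $|S|$ in the Bonferroni threshold is immaterial since your argument only uses that the threshold is non-increasing in the set size.

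The reverse direction is where the gap lies. Your strategy --- an explicit two-value configuration $(q,\dots,q,Q,\dots,Q)$ witnessing non-consonance --- is exactly the paper's, and your $r=1$ instance checks out ($M_1=0.05\le 0.05$ on $\{1,2,3\}$, $M_1=0.04$ on $\{1,2\}$, but $M_1=0.055>0.05$ on $\{1,3\}$ and $\{2,3\}$). But the lemma requires a counterexample for \emph{every} finite $r$, and your claim that ``the tuning reduces to a one-parameter problem that is always solvable'' is asserted rather than demonstrated. This is not a cosmetic omission: for $r<0$ the rejection inequality reverses (small $p$-values give large $p^r$), and the calibration constants change form across $r>0$, $r=0$, $-1<r<0$, $r=-1$, and $r<-1$. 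The paper handles each regime with a separate explicit construction, and notably the $r=-1$ case requires $m=5$ (with the $\log m$-dependent harmonic-mean threshold) and the $r<-1$ case requires $m\geq\max\{3,\sqrt[t]{3}/(\sqrt[t]{3}-1)\}$ with a constraint on $\alpha$ --- evidence that a fixed $m=3$ template does not go through uniformly and that the counts-versus-thresholds bookkeeping you defer is where the actual work is. To close the proof you would need to exhibit, for each of these regimes, concrete values of $q$, $Q$, $k$, $m$, and (where necessary) a restriction on $\alpha$ for which conditions (i) and (ii) simultaneously hold, as the paper does.
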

\begin{proof}
We will prove this proposition by analyzing different $r$ case by case. Firstly, we show that closed testing using local test $t_{\alpha}^{(r)}$ \eqref{tralpha} is consonant when $r=\pm\infty$.
\begin{enumerate}
    \item When $r=\infty$, note that
\begin{equation}\label{inf}
t_{\alpha}^{(\infty)}(S) = \ident\{\max_{i\in S} p_i \leq \alpha\}.    
\end{equation}
Therefore, rejecting $S$ after closure implies rejecting all the sets containing it locally, including the set $[m]$, which in turn indicates rejection of all the sets locally, and after closure as well, therefore trivially, we have all the subsets of $S$ being rejected after closure. In conclusion, the corresponding closed testing when $r=\infty$ is consonant.

    \item When $r=-\infty$, note that
\begin{equation}\label{-inf}
    t_{\alpha}^{(-\infty)}(S) = \ident\{|S|\min_{i\in S} p_i \leq \alpha\}.
\end{equation}
Therefore $\overline{t}_{\alpha}^{-\infty}(S)=1$ implies
\begin{equation}\label{-inf1}
     t_{\alpha}^{-\infty}(B)=1 \quad \text{for all } B \in \mathcal{B} := \{I \subseteq[m]: S \subset I\},
\end{equation}
and particularly $t_{\alpha}^{-\infty}(S)=1$, which in turn gives us
\begin{equation}\label{-inf2}
     t_{\alpha}^{-\infty}(A)=1 \quad \text{for all } A \in \mathcal{A} :=\{I \subseteq [m]: I\subset S, \min_{i \in S}p_i \in I \}
\end{equation}
from the expression in \eqref{-inf}.

On the other hand, note that for some $A \in \mathcal{A}$, and $J \supseteq A$, we have either $J \supseteq S$, or $|J| \not\supseteq S$. In the former case, $J$ is rejected locally due to fact \eqref{-inf1}. In the later case, if $|J| \leq |S|$, we have $|J|\min_{i\in J} p_i \leq |S|\min_{i\in A} p_i = |S|\min_{i\in S} p_i \leq \alpha$; if $|J| > |S|$, then there must exist $B \in \mathcal{B}$ such that $|B| = |J|$, which implies $|J|\min_{i\in J} p_i = |B|\min_{i\in B} p_i \leq \alpha$ due to fact \eqref{-inf2}. Therefore $J$ is still rejected locally. Hence there exists at least one subset $A$ of $S$ that is rejected after closure, that is the corresponding closed testing is consonant.
    
\end{enumerate}
\noindent
Then we use counter examples to show that closed testing using local test $t_{\alpha}^{(r)}$ \eqref{tralpha} is not consonant when $r\neq \pm\infty$.
\begin{enumerate}
    \item When $0< r < \infty$, note that 
     \[
       t_{\alpha}^{(r)} = \ident\Big\{\sum_{i=1}^m p_i^r \leq  \frac{m\alpha^r}{2(r+1)}\Big\}
     \]
     Let \smash{$\beta_{r,\alpha} = \frac{\alpha^r}{2(r+1)}$}, the local testing rule becomes $\sum_{i=1}^m p_i^r \leq  m \beta_{r,\alpha}$. Note that $0 \leq \beta_{r,\alpha} \leq \frac{1}{2(r+1)} \leq 1/2$ for any $r>0$. For the case $m=3$, and $p_1^r = \beta_{r,\alpha} /3$, $p_2^r = \beta_{r,\alpha} /2$, $p_3^r = 2\beta_{r,\alpha}$, we have $p_1^r + p_2^r + p_3^r < 3\beta_{r,\alpha}$, $p_1^r + p_2^r < 2\beta_{r,\alpha}$, while $p_1^r + p_3^r > 2\beta_{r,\alpha}$ and $p_1^r < \beta_{r,\alpha}$. Therefore, we reject $H_1\cap H_2 \cap H_3$, $H_1\cap H_2$ but neither $H_1$ nor $H_2$ after closure, therefore the rejection $H_1\cap H_2$ is not consonant.

     \item When $r =0$, note that 
     \[
       t_{\alpha}^{(0)} = \ident\Big\{\sum_{i=1}^{m}\log{\frac{1}{p_i}} >  m\log{\frac{e}{\alpha}}\Big\}.
     \] 
     Let \smash{$\beta_{\alpha} =\log{\frac{e}{\alpha}}$}, and $q_i = \log{(\frac{1}{p_i})}$, then the local testing rule becomes $\sum_{i=1}^{m}q_i \geq  m\beta_{\alpha}$. Note that $1\leq \beta_{\alpha}<\infty$, and $1\leq q_i \leq \infty$. For $m=3$, let $\alpha  = e^{0.9}$, and $q_1 = 1.6\beta_{\alpha}$, $q_2 = 1.4\beta_{\alpha}$, $q_3 = 0.1\beta_{\alpha}$, therefore we will reject $H_1\cap H_2 \cap H_3$ and $H_1\cap H_2$ after closure, but neither $H_1$ nor $H_2$, which indicates that the rejection $H_1\cap H_2$ is not consonant. 
     
     \item When $-1 < r < 0$, note that 
     \[
       t_{\alpha}^{(r)} = \ident\Big\{\sum_{i=1}^m p_i^r \geq  \frac{m\alpha^r}{2(r+1)}\Big\}.
     \] 
     Let \smash{$\beta_{r,\alpha} = \frac{\alpha^r}{2(r+1)}$}, then the local testing rule becomes \smash{$\sum_{i=1}^m p_i^r \geq  m \beta_{r,\alpha}$}. Note that $1/2 \leq \frac{1}{2(r+1)} \leq \beta_{r,\alpha} < \infty$ for any $-1<r<0$. For the case $m=3$, let $\alpha = \sqrt[r]{20(r+1)}$, and $p_1^r = 1.6\beta_{r,\alpha}$, $p_2^r = 1.4\beta_{r,\alpha} $, $p_3^r = 0.1\beta_{r,\alpha}$. Note the fact that $\beta_{r,\alpha}\geq 10$, we have that $p_1^r + p_2^r + p_3^r > 3\beta_{r,\alpha}$, $p_1^r + p_2^r > 2\beta_{r,\alpha}$, while $p_1^r + p_3^r < 2\beta_{r,\alpha}$ $p_2^r + p_3^r < 2\beta_{r,\alpha}$. Therefore, we reject $H_1\cap H_2 \cap H_3$, $H_1\cap H_2$
but neither $H_1$ nor $H_2$ after closure, therefore the rejection $H_1\cap H_2$ is not consonant.
     
     \item When $r = -1$, note that 
     \[
       t_{\alpha}^{(-1)} = \ident\Big\{\sum_{i=1}^m \frac{1}{p_i} \geq \frac{e m\log{m}}{\alpha}\Big\}.
     \] 
     Let $\beta_{\alpha} = \frac{e}{\alpha}$, $q_i = 1/p_i$, then the testing rule becomes $\sum_{i=1}^m q_i \geq m\log{m}\beta_{\alpha}$. For the case $m = 5$, let $q_1 = q_2 = 2\beta_{\alpha}\log{5}$, $q_3 = q_4 = q_5 = \frac{1}{3}\beta_{\alpha}\log{5}$, then we have 
     \[\sum_{i=1}^5 q_i  = 5\log{5}\beta_{\alpha},\quad \sum_{i=1}^4 q_i  = 4\frac23\log{5}\beta_{\alpha}\geq 4\log{4}\beta_{\alpha},\quad \text{and}\sum_{i=1, i\neq 2}^5 q_i  = 3\log{5}\beta_{\alpha}\leq 4\log{4}\beta_{\alpha}.
     \] 
     Therefore, we must locally reject $\cap_{i=1}^{5}H_i$, $\cap_{i=1}^{4}H_i$, but not $\cap_{i=1, i\neq 2}^{5}H_i$. Therefore, after closure, we will reject $\cap_{i=1}^{5}H_i$ and $\cap_{i=1}^{4}H_i$, but we will not reject $H_1$, nor $H_2$, $H_3$ or $H_4$, therefore the rejection $\cap_{i=1}^{4}H_i$ is not consonant.
     
     \item When $-\infty < r < -1 $, note that 
     \[
       t_{\alpha}^{(r)} = \ident\Big\{\sum_{i=1}^{m}p_i^r \geq m^{-r}\alpha^{r+1}\Big\}.
     \] 
     Let $t = -r$, $\beta_{t,\alpha} = \alpha^{-t+1}$, $q_i  = 1/p_i$, then the local test becomes $\sum_{i=1}^m q_i^t \geq m^t \beta_{t,\alpha}$. For the case $m\geq \max\{3, \frac{\sqrt[t]{3}}{\sqrt[t]{3} -1}\}$, let $q_1^t = q_2^t \frac12 m^t \beta_{\alpha}$, and $q_3^t = \dots = q_m^t  = \frac{1}{6(m-2)}m^t \beta_{\alpha}$ (choose  $\alpha$ such that $\beta_{t, \alpha} > 6$), then we have that, $\sum_{i=1}^m q_i^t \geq m^t\beta_{\alpha}$ ,  $\sum_{i=1}^{m-1} q_i^t \geq m^t\beta_{\alpha}$, $\sum_{i=1, i\neq 2}^{m} q_i^t < m^t\beta_{\alpha}$. Therefore, we will reject $\cap_{i\in[m-1]}H_i$ after closure, but we cannot reject $H_1$ after closure, therefore the rejection $\cap_{i\in[m],i\neq 2}H_i$ is not consonant.

\end{enumerate}
\end{proof}

\section{Algorithms for post-hoc auto-selection shortcuts}\label{app:algoauto}
\setcounter{algocf}{1}
\begin{algorithm}[H]
	\SetAlgoLined
	\KwIn{A sequence of sorted scores $T_1, \dots, T_m$ which satisfies $T_{1}\geq\dots\geq T_{m}$;
	a local test rule of form \eqref{sepsym} with a monotonically increasing transformation function $h$ and thresholding function $g$; confidence level $\alpha$.}
	\KwOut{Largest set $S$ with zero false discoveries among all possible subsets of $[m]$, equivalently,  the set of hypotheses with strong FWER control of level $\alpha$.}
	{\textbf{Initialization:}\\
	\nonl $ \textnormal{transformed scores }  u_{1}, \dots, u_{m}$ where $u_{i}= h(T_{i})\  \textnormal{for}\ 1\leq i\leq m$;\\
	\nonl iteration related indices $k \gets 1; \ s \gets 1; $\\
	\nonl accumulated scores $Q \gets u_{k}$;\\
	\nonl layer-wise thresholding 
	$c \gets g(s, \alpha)$. }
	
	\While{$k<m$ \textbf{and} $s<m$}{
		\eIf{$Q > c$}{
			\eIf{$s\geq k$}{$c\leftarrow c - u_{k}$
			\\
			$Q \leftarrow Q - u_{k}$}
			{$Q \leftarrow Q - u_{k} + u_{k+1}$;}
			$k \leftarrow k+1$}	
		{ 
			$c \leftarrow c + g(s+1, \alpha) - g(s, \alpha)$
			
			\eIf{$s \geq k$}{
				$Q \leftarrow Q + u_{k+1}$
			}
			{
				$c \leftarrow c - u_{s}$
			}
			$s \leftarrow s+1$   
		}
	}
	\Return{ $S = \{k, \dots, m\}$}
	\caption{Shortcut for auto-selection of the largest rejection set with zero FDP}\label{shortcuts}
\end{algorithm}
\newpage
\begin{algorithm}[H]
	\SetAlgoLined
	\KwIn{confidence level $\alpha \in (0,1)$; desired FDP bound $\gamma \in (0,1)$; incremental candidate sets \smash{$S_1\subset S_2 \dots\subset S_n$ with $|S_k| = k$.}}
	\KwOut{the largest $S_k$ such that $\overline{e}_{\alpha}(S_k)\leq \gamma|S_k|$. }
	
    {\textbf{Initialization:} $k \gets 1$}\\
	\While{$k \geq 1$}{
		$\overline{e} \leftarrow \overline{e}_{\alpha}(S_k)$\\
		\eIf{$\overline{e}/k \leq \gamma $}{
			\Return{$S_k$}}
	  	{
			$k \leftarrow \lfloor \frac{k-\overline{e}}{1-\gamma}\rfloor$		
	     }
		}
	\Return{$\emptyset$} 
	\caption{Shortcut for auto-selection of the largest rejection set with bounded FDP}\label{shortcuts-gamma}
\end{algorithm}

		
\section{Proof for \thmref{gammaselection}}\label{app:pfgammaselection}
We first prove part (a) of \thmref{gammaselection}. Denote $e_k = \overline{e}(S_k)$, and $d_k =k-e_k$. Consider we are at the iteration when $k=i$ with $i\in[n]$. If $\frac{e_i}{i} \leq \gamma$, then we return $i$, otherwise, we need to look for $j< i$ such that $\frac{e_j}{j} \leq \gamma$.  Note that, for $j<i$, if $\frac{d_i}{1-\gamma} < j$, we have
\begin{equation}
    \frac{d_j}{j} \stackrel{(*)}{\leq} \frac{d_i}{j} < 1-\gamma, \quad \textnormal{and in turn }  \frac{e_j}{j} > \gamma,
\end{equation}
where (*) is true from Lemma 3 in \citet{goeman2019only}.
Therefore we cannot have $\frac{e_j}{j} \leq \gamma$ for $j < i$ if $j > \frac{d_i}{1-\gamma}$, so we directly skip those iterations in batches, and that gives us Algorithm~\ref{shortcuts-gamma}. \\

\noindent
Next, we prove part (b) and (c) which requires additional assumptions about the local tests. Part (b) is follows immediately from \thmref{shortcut}. Hence, we only prove part (c) of the theorem in the following. Without loss of generality, assume that all the $m$ scores are sorted as $T_1 \geq T_2 \geq \dots \geq T_m$. 

Firstly, we claim that the largest set $S\subseteq [m]$ with $\overline{e}_{\alpha}(S)=0$ admits strong FWER control at level $\alpha$. Note, from the definition of $\overline{e}_{\alpha}(S) $ in \eqref{fdpbound}, for any $S$ such that $\overline{e}_{\alpha}(S) =0$, each of its elementary subset is rejected by closed testing at level $\alpha$; and conversely, for any hypotheses set $S$ that is a collection of elementary hypotheses rejected by closed testing at level $\alpha$, each of its subsets is also rejected by closed testing, and hence $\overline{e}_{\alpha}(S)=0$. Therefore, the largest set $S\subseteq [m]$ with $\overline{e}_{\alpha}(S)=0$ is the collection of all the elementary hypotheses that are rejected by closed testing at level $\alpha$. Then recalling the well-known fact that the collection of all elementary hypotheses rejected by closed testing at level $\alpha$ is a hypothesis set with strong FWER control at level $\alpha$, we have proved our claim.

Then we show that finding the collection of all the elementary hypotheses rejected by closed testing at level $\alpha$ is equivalent to finding a cutoff in the ordered scores. From the monotonicity of the local test, it is easy to see that, for any $k \in [m]$, if closed testing rejects $T_k$, it must also reject $T_i$ for all $i > k$. Therefore, the final rejection sets must be of the form $\{T_{k^{\star}},\dots, T_m\}$, where $k^{\star}$ is a cutoff we are interested in finding in the ordered scores.
		
Finally, we show that Algorithm~\ref{shortcuts} is constructed in a way to find the correct cutoff, which is realized via searching from the largest score and stopped at the first one (which is our cutoff $k^{\star}$) rejected by closed testing. Note that we reject $H_k$ via closed testing if and only if each composite hypothesis containing it can be rejected locally. Using the monotonicity of the local test, this is saying that, for each $s = 1, \dots, m$, we have:
\begin{align}\label{conditionfwer}
	\begin{cases}
		\sum_{i = 1}^{s} h(T_{i})\leq  g(s,\alpha),\quad \text{if}\ s \geq k;\\
		h(T_k) + \sum_{i = 1}^{s-1} h(T_i) \leq  g(s, \alpha),\quad \text{otherwise}.
		\end{cases}
	\end{align}
With simple rearrangement, one may see that Algorithm~\ref{shortcuts} starts with $k = 1$, increase $k$ by 1 in each of its updates with $k$, and stops at the first time that \eqref{conditionfwer} is satisfied, when $k$ is the cutoff $k^{\star}$ of our interests. Therefore, we have finished the proof for part (c).

\section{Proof for \propref{worstcorr}}\label{app:worstcorrpf}
We call $X_{mi}$ as just $X_i$ in this proof for brevity. Note that, for $r\geq 1$,
\begin{equation}
    \widetilde{\alpha}_m(\Sigma,r,c):= \textnormal{Pr}_{\cap_{i=1}^m H_{mi}}\left\{\left(\frac1m \sum_{i = 1}^m p_{mi}^r \right)^\frac1r \leq c\right\} = \PP{\frac1m \sum_{i = 1}^m h_r(X_i) \leq C},
\end{equation}
where $h_r(x):= \Phi(-x)^r$, and $C := c^r$. Note that $h_r$ is a convex function for $x\geq 0$ when $r\geq1$. Indeed, taking second derivative of $h_r$ with respect to $x$, we have
\[
\frac{d^2 h_r(x)}{d x^2} = r\Phi(-x)^{r-2}\phi^2(x)\left[ x \Phi(-x) + r - 1 \right] .
\]
When $C < \frac{1}{2^r m^r}$, the event $E_1:=\{ \frac1m \sum_{i = 1}^m h_r(X_i) \leq C \}$ implies\footnote{To see this, note that
    $\frac1m \sum_{i = 1}^m h_r(X_i) \leq C$ implies that $\max_{i\in [m]} h_r(X_i) \leq m C$, which  happens if and only if $\min_{i\in[m]} X_i \geq -\Phi^{-1}(m C^{\frac1r}) > 0$,
where the last inequality is because $C < \frac{1}{2^r m^r}$.}
the event $E_2:=\{X_i>0,\ \forall i\in[m]\}$. Then, $E_1$ and $E_2$ together imply the event $E_3:=\{ h_r(\frac1m \sum_{i = 1}^m X_i) \leq C \}$ due to convexity of $h_r(x)$ for 
$x \geq 0$ and $r \geq 1$. 
Therefore,
\begin{align}\label{rhoall}
\PP{\frac1m \sum_{i = 1}^m p_i^r \leq  C} &= \PP{\frac1m \sum_{i = 1}^m h_r(X_i)  \leq  C} \leq \PP{ h_r(\frac1m \sum_{i = 1}^m X_i) \leq C}  \nonumber \\ & = \PP{ \Phi(-\frac1m \sum_{i = 1}^m X_i)^r \leq C}
= \PP{\frac1m \sum_{i = 1}^m X_i \geq -\Phi^{-1}(C^{\frac1r}) } \nonumber \\ &= \PP{\frac1m \sum_{i = 1}^m X_i \geq C_2} 
= \text{Pr}_{\mathbf{X}\sim N(0, \Sigma)}\left\{\frac1m \mathbb{I}_m^T X \geq  C_2\right\} \nonumber\\ & \stackrel{(*)}{=} \text{Pr}_{Z\sim N(0, \sigma^2_{\Sigma})}\left\{Z  \geq C_2\right\} = 1-\Phi(C_2/\sigma_{\Sigma}),
\end{align}
where $C_2 = -\Phi^{-1}(C^{\frac1r})>0$, and $\sigma^2_{\Sigma} = \frac{1}{m^2}\mathbb{I}_m^{T}\Sigma\mathbb{I}_m \in \mathbb{R}$, where $\mathbb{I}_m$ is vector of all ones in $\mathbb{R}^m$.  Particularly, (*) is true due to the fact that Gaussianity is preserved under affine transformations. 

On the other hand, under fully dependence (i.e. $\rho_{ij} \equiv 1$ for all $i,j$), we have 
\begin{align}\label{rho1}
\PP{\frac1m \sum_{i = 1}^m p_i^r  \leq  C} &= \PP{ p_1^r  \leq   C} = \PP{ \Phi(-X_1)^r  \leq   C} \nonumber \\ &= \PP{X_1  \geq   C_2} = 1-\Phi( C_2).
\end{align}
Therefore combining \eqref{rhoall} and \eqref{rho1}, and the fact that $C_2>0$, we have 
\begin{align}
1-\Phi(C_2)&\leq \sup_{\Sigma \in \mathcal{M}_m} \PP{\frac1m \sum_{i = 1}^m p_i^r  \leq  C} \nonumber\\&\leq \sup_{\Sigma \in \mathcal{M}_m} 1-\Phi(C_2/\sigma_{\Sigma}) \\
&\stackrel{(a)}{=} \sup_{\Sigma \in \mathcal{M}_m^{E}} 1-\Phi(C_2/\sigma_{\Sigma}) \nonumber\\&\stackrel{(b)}{=} \sup_{\rho \in [-\frac{1}{m},1]} 1-\Phi\left(\frac{C_2}{\frac1m+\frac{m-1}{m}\rho}\right) = 1-\Phi(C_2),
\end{align}
where $\mathcal{M}_m$ is the class of all correlation matrix, and $\mathcal{M}_m^{E}$ is the class of all equicorrelation matrices with correlation $\rho\in[-\frac{1}{m},1]$. Specifically, (a) is true since $\sigma_\Sigma$ only depends on the average of all entries in $\Sigma$, and (b) is true since  $\sigma_\Sigma = \frac{1}{m} + \frac{m-1}{m}\rho$ for any $\Sigma$ in $\mathcal{M}_m^{E}$. In conclusion, we have
\begin{equation}
\sup_{\Sigma \in \mathcal{M}_m} \PP{\frac1m \sum_{i = 1}^m p_i^r  \leq  C} = 1-\Phi(C_2), 
\end{equation}
for all $r\geq 1$ and the supremum is achieved at full dependence. Transforming back to the original representation in \eqref{Hrhom}, we have completed our proof.

\section{Proof for \thmref{asymh}}\label{app:asymhpf}
Recalling decomposition \eqref{decomposition} in the main paper,  we can rewrite $\widetilde{\alpha}_m(\rho, r, c)$ as the following, which makes the link to the Generalized Law of Large Numbers clearer:
\begin{align}
\widetilde{\alpha}_m(\rho, r, c) &= \mathbb{E}_{Z_0}\left[\PPst{\left(\frac1m \sum_{i=1}^m p_i^r\right)^\frac1r \leq c)}{ Z_0= z_0}\right] \nonumber\\
&=\mathbb{E}_{Z_0}\left[\PPst{\textnormal{sign}(r)\frac1m \sum_{i=1}^m p_i^r \leq \textnormal{sign}(r)\cdot C)}{ Z_0= z_0}\right],
\end{align}
where we use the conditional independence amongst $\{p_i\}_{i=1}^m$, and replace $C:=c^r$. Then we have
\begin{align*}
\limsup_{m\to \infty}\widetilde{\alpha}_m(\rho, r, c) &= \limsup_{m\to \infty}\mathbb{E}_{Z_0}\left[\PPst{\textnormal{sign}(r)\frac1m \sum_{i=1}^m p_i^r \leq \textnormal{sign}(r)\cdot C}{Z_0= z_0}\right]\\ 
&= \mathbb{E}_{Z_0}\left[\limsup_{m\to \infty}\PPst{\textnormal{sign}(r)\frac1m \sum_{i=1}^m p_i^r \leq \textnormal{sign}(r)\cdot C}{ Z_0 = z_0}\right],
\end{align*}
where the last equality is true by applying dominance convergence theorem since the inner probability is integrable. 
In the following, we focus on quantifying the limitation of the conditional probability 
\begin{equation}\label{condprob}
\PPst{\textnormal{sign}(r)\frac1m \sum_{i=1}^m p_i^r \leq \textnormal{sign}(r)\cdot C}{ Z_0=z_0}, 
\end{equation} for which we need \lemref{cdf} to characterizes the distribution of $p_i^r$ for $r \neq \pm \infty$.
\begin{lemma}\label{lem:cdf}
	Denote the CDF of $p_i^r$ (with $p_i$ defined in \eqref{decomp2}) conditioning on $Z_0 = z_0$ as $F_{r, \rho, z_0}$, and the corresponding density as $f_{r, \rho, z_0}$, we have that:
	\begin{equation}\label{Fy}
	F_{r, \rho, z_0}(y) = \Phi\left(\textnormal{sign}(r)\frac{\Phi^{-1}(y^\frac1r)+\sqrt{\rho}z_0}{\sqrt{1-\rho}}\right), \quad\text{and}\quad   f_{r, \rho, z_0}(y) = O(y^{-(\frac{1}{(\rho-1)r} +1)})  \quad \textnormal{as}\ y^r \to 0,
	\end{equation}
where we take $y^{-(\frac{1}{(\rho-1)r} +1)} = \exp(-\frac{y}{\rho-1})$ when $r=0$.
\end{lemma}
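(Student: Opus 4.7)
The plan is to work conditionally on $Z_0 = z_0$. Under the representation \eqref{decomp2}, $p_i$ reduces to a deterministic monotone function of the standard Gaussian $Z_i$, so $F_{r,\rho,z_0}(y) = \text{Pr}(p_i^r \leq y \mid Z_0 = z_0)$ can be obtained by directly inverting the map $Z_i \mapsto p_i^r$. Since $y \mapsto y^{1/r}$ is increasing when $r>0$ and decreasing when $r<0$, the sign of $r$ determines which tail of $Z_i$ the event collapses onto: for $r>0$, $\{p_i^r \leq y\} = \{p_i \leq y^{1/r}\} = \{Z_i \geq (-\Phi^{-1}(y^{1/r}) - \sqrt{\rho}\,z_0)/\sqrt{1-\rho}\}$, while for $r<0$ the inequality on $Z_i$ flips. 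Packaging both cases with the prefactor $\sign{r}$ yields the stated unified formula for $F_{r,\rho,z_0}$.

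For the density, I would differentiate via the chain rule. Setting $u := \Phi^{-1}(y^{1/r})$ so that $du/dy = y^{1/r-1}/(r\phi(u))$, a short calculation gives
\begin{equation*}
f_{r,\rho,z_0}(y) \;=\; \frac{1}{|r|\sqrt{1-\rho}}\cdot \frac{\phi\!\left((u+\sqrt{\rho}\,z_0)/\sqrt{1-\rho}\right)}{\phi(u)}\cdot y^{1/r-1}.
\end{equation*}
Completing the square in the exponent collapses the Gaussian ratio to $e^{z_0^2/2}\exp(-\rho(u+z_0/\sqrt{\rho})^2/(2(1-\rho)))$, isolating a clean exponential factor whose asymptotic behavior is driven purely by $u^2$.

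For the asymptotic rate as $y^r \to 0$, the common mechanism in both regimes (the $r>0$ case with $y \to 0$ and the $r<0$ case with $y \to \infty$) is that $y^{1/r} \to 0$, forcing $u \to -\infty$. Applying the Mills-ratio asymptotic $\Phi(u) \sim \phi(u)/|u|$, equivalently $\log \Phi(u) \sim -u^2/2$, to the identity $\Phi(u) = y^{1/r}$ yields $u^2 \sim -(2/r)\log y$. Substituting into the exponential factor gives $\exp(-\rho u^2/(2(1-\rho))) \sim y^{\rho/(r(1-\rho))}$, and multiplying by the explicit $y^{1/r-1}$ prefactor telescopes the exponents to $1/(r(1-\rho)) - 1 = -(1/((\rho-1)r)+1)$, matching the claim. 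The $r=0$ convention is recovered by reparametrizing on the logarithmic scale $y \leftrightarrow -\log p_i$ (the scale relevant to the geometric mean), on which the same Mills-ratio step produces an exponential in $y$ rather than a polynomial.

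The main obstacle I expect is the sign bookkeeping across $r>0$ and $r<0$, combined with the subleading $\log|u|$ corrections in the Mills-ratio expansion; those corrections contribute only a polylogarithmic-in-$y$ factor and must be absorbed into the $O(\cdot)$ notation without perturbing the polynomial exponent in $y$. Minor additional care is needed at the boundary cases $\rho \to 1$ (where the exponent diverges and the density concentrates) and $r=0$ (handled by the logarithmic reparametrization above), but these are straightforward once the main Mills-ratio calculation is in hand.
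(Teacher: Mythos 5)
Your proposal is correct and follows essentially the same route as the paper: invert the monotone map $Z_i \mapsto p_i^r$ conditionally on $Z_0$ to get the CDF, differentiate via the chain rule to obtain the $y^{1/r-1}$ prefactor times a Gaussian ratio, and use the Mills-ratio asymptotic $u^2 \sim -(2/r)\log y$ to telescope the exponents to $-(\tfrac{1}{(\rho-1)r}+1)$, with the $r=0$ case handled by the same logarithmic reparametrization the paper uses. Your explicit completing-the-square and the remark about absorbing the subpolynomial cross-term and polylog corrections into the $O(\cdot)$ are, if anything, slightly more careful than the paper's own writeup.
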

\begin{proof}
	Without loss of generality, we only prove for the case $r\geq 0$. Firstly, when $r>0$, we have:
	\begin{align}\label{Fy}
	F_{r, \rho, z_0}(y) &= \PPst{p_i^r \leq y}{ Z_0 = z_0} = \PP{\Phi(-\sqrt{\rho}z_0-\sqrt{1-\rho} Z_i) \leq y^\frac1r} \nonumber\\
	&= \PP{ Z_i \leq \frac{\Phi^{-1}(y^\frac1r)+\sqrt{\rho}z_0}{\sqrt{1-\rho}}} = \Phi\left(\frac{\Phi^{-1}(y^\frac1r)+\sqrt{\rho}z_0}{\sqrt{1-\rho}}\right),
	\end{align}
	and also the density
	\begin{align}
	f_{r, \rho, z_0}(y) &= \frac{d F_{r, \rho, z_0}(y)}{dy} \propto \frac{y^{\frac1r-1}}{r\sqrt{1-\rho}}\phi\left(\frac{\Phi^{-1}(y^\frac1r)+\sqrt{\rho}Z_0}{\sqrt{1-\rho}}\right) \Big / \phi(\Phi^{-1}(y^\frac1r)) \\
	& \propto  \frac{y^{\frac1r-1}}{r\sqrt{1-\rho}}\exp\left(-\frac{\rho \Phi^{-1}(y^\frac1r)^2 + \sqrt{\rho}Z_0\Phi^{-1}(y^\frac1r)}{2(1-\rho)}\right).
	\end{align}
	Using the approximation 
	\[
	\Phi^{-1}(x) =O\left(-\sqrt{\log{\frac{1}{x^2}}}\right)\quad \text{when}\ \  x\to 0,
	\]
	we have that, 
	\begin{align}\label{frpos}
	f_{r, \rho, z_0}(y) =  O(y^{-(\frac{1}{(\rho-1)r} +1)}) \quad \textnormal{as}\ y \to 0.
	\end{align}
	
	\noindent
	For $r=0$, we have:
	\begin{align}\label{Fy}
	F_{r, \rho, z_0}(y) &= \PPst{\log{p_i} \leq y}{ Z_0 = z_0} = \PP{\Phi(-\sqrt{\rho}z_0-\sqrt{1-\rho} Z_i) \leq \exp{(y)} } \nonumber\\
	&= \PP{ Z_i \leq \frac{\Phi^{-1}(\exp{(y)})+\sqrt{\rho}z_0}{\sqrt{1-\rho}}} = \Phi\left(\frac{\Phi^{-1}(\exp{(y)})+\sqrt{\rho}z_0}{\sqrt{1-\rho}}\right),
	\end{align}
	and also the density
	\begin{align}
	f_{r, \rho, z_0}(y) &= \frac{d F_{r, \rho, z_0}(y)}{dy} \propto \frac{\exp{(y)}}{r\sqrt{1-\rho}}\phi\left(\frac{\Phi^{-1}(\exp{(y)})+\sqrt{\rho}Z_0}{\sqrt{1-\rho}}\right) \Big / \phi(\Phi^{-1}(\exp{(y)})) \\
	& \propto  \frac{\exp{(y)}}{r\sqrt{1-\rho}}\exp\left(-\frac{\rho \Phi^{-1}(\exp{(y)})^2 + \sqrt{\rho}Z_0\Phi^{-1}(\exp{(y)})}{2(1-\rho)}\right).
	\end{align}
	Again using the approximation 
	\[
	\Phi^{-1}(x) =O\left(-\sqrt{\log{\frac{1}{x^2}}}\right)\quad \text{when}\ \  x\to 0,
	\]
	we have that, 
	\begin{align}\label{frpos}
	f_{r, \rho, z_0}(y) =  O(\exp{(\frac{y}{1-\rho})}) \quad \textnormal{as}\ y \to -\infty, \textnormal{i.e. } \log{y} \to 0.
	\end{align}\\
\end{proof}

\paragraph{(a) \& (b) $\bm{r > -1}$:}
When $r>-1$, using \lemref{cdf} we have that $\EEst{p_1^r}{Z_0= z_0}<\infty$ for any $\rho \in [0,1]$, therefore by the Law of Large Numbers, we have
\begin{equation}
\frac1m \sum_{i = 1}^m p_i^r\  \big| \ Z_0= z_0 \ \stackrel{d}{\longrightarrow}\ \EEst{p_1^r}{Z_0 = z_0},
\end{equation}
where $\stackrel{d}{\to}$ means converge in distribution. Therefore,
\begin{align}
&\limsup_{m\to \infty}\PPst{\textnormal{sign}(r)\frac1m \sum_{i=1}^m p_i^r \leq \textnormal{sign}(r)\cdot C}{ Z_0 = z_0} \nonumber\\& = \PP{ \textnormal{sign}(r)\EEst{p_1^r}{ Z_0=z_0} \leq \textnormal{sign}(r)\cdot C},
\end{align}
and hence
\begin{align}\label{hlim}
 \limsup_{m\to\infty}\widetilde{\alpha}_m(\rho, r, c)& = \mathbb{E}_{Z_0}[\PP{ \textnormal{sign}(r)\EEst{p_1^r}{ Z_0 = z_0} \leq \textnormal{sign}(r) \cdot C}]:= h(\rho, r, C). 
\end{align}
Recall that the conditional mean $g_{\rho,r}(z_0):=  \EEst{p_i^r}{Z_0 = z_0}$ in \eqref{murho1}, we have
\begin{align}\label{murho}
g_{\rho, r}(z_0) &=  \int \Phi(-\sqrt{\rho}\ z_0 - \sqrt{1-\rho}\ x)^r \phi(x) dx \nonumber \\&= \frac{1}{\sqrt{1-\rho}}\int \phi(\frac{y-\sqrt{\rho}z_0}{\sqrt{1-\rho}}) \Phi(-y)^r dy,
\end{align}
 where $\phi$ as the standard normal p.d.f. From expression in \eqref{murho}, it is easy to see that $g_{\rho, r}(z_0)$ is monotonically non-increasing in $z_0$ when $r\geq 0$, while monotonically non-decreasing in $z_0$ when $r< 0$. Therefore, using this monotonicity, we have explicit expression
\begin{equation}\label{h}
    h(\rho, r, C) = \Phi\left(-g_{\rho, r}^{-1}(C)\right).
\end{equation}
Recall the relationship $C\equiv c^r$, and the definition of $c_r(m,\alpha)$ that
\[
c_r(m,\alpha) := \sup\{c: \sup_{\rho \in [0,1]}\limsup_{m\to\infty} \widetilde{\alpha}_m(\rho, \alpha, c) \leq \alpha\},
\]
where the supremum over $c$ is taking over $\mathbb{R}$, we omit it for simplicity.

For $r > 0$, plugging in expression \eqref{h} in \eqref{hlim}, we have the following closed expression
{\color{black}
\begin{align*}
  c_r(m,\alpha) &= \left(\sup\{C: \sup_{\rho \in [0,1]}\Phi(-g_{\rho, r}^{-1}(C)) \leq \alpha\}\right)^{\frac1r}.
\end{align*}
Denote $C_\rho:= \sup\{C: \Phi(-g_{\rho, r}^{-1}(C)) \leq \alpha\}$, we claim that $c_r(m,\alpha)$ is equivalent to $\left(\inf_{\rho\in[0,1]} C_{\rho}\right)^{\frac1r}$. To prove this claim, first note that
\begin{equation}\label{eq1}
    \sup\{C: \sup_{\rho \in [0,1]}\Phi(-g_{\rho, r}^{-1}(C)) \leq \alpha\} = \sup \bigcap_{\rho\in [0,1]}\{C: \Phi(-g_{\rho, r}^{-1}(C)) \leq \alpha\}.
\end{equation}
This is true due to the following simple  reasoning. For each $c$ such that $\sup_{\rho \in [0,1]}\Phi(-g_{\rho, r}^{-1}(c)) \leq \alpha$, we have
\begin{equation*}
    c \in \bigcap_{\rho\in [0,1]}\{C: \Phi(-g_{\rho, r}^{-1}(C)) \leq \alpha\}.
\end{equation*}
On the other hand, for each $c \in \bigcap_{\rho\in [0,1]}\{C: \Phi(-g_{\rho, r}^{-1}(C)) \leq \alpha\}$, we have
\begin{equation*}
    \sup_{\rho \in [0,1]}\Phi(-g_{\rho, r}^{-1}(c)) \leq \alpha.
\end{equation*}
Therefore
\begin{equation*}
    \{C: \sup_{\rho \in [0,1]}\Phi(-g_{\rho, r}^{-1}(C)) \leq \alpha\} = \bigcap_{\rho\in [0,1]}\{C: \Phi(-g_{\rho, r}^{-1}(C)) \leq \alpha\},
\end{equation*}
and taking supremum on both sides we have \eqref{eq1}.

Then we show that
\begin{equation}\label{eq2}
    \sup \bigcap_{\rho\in [0,1]}\{C: \Phi(-g_{\rho, r}^{-1}(C)) \leq \alpha\} = \inf_{\rho\in [0,1]}\{\sup \{C: \Phi(-g_{\rho, r}^{-1}(C)) \leq \alpha\}\}.
\end{equation}
For clarity, denote the LHS and RHS of \eqref{eq2} as $C_L$ and $C_R$ respectively. It is easy to see that
\begin{equation*}
    C_R \in \bigcap_{\rho\in [0,1]}\{C: \Phi(-g_{\rho, r}^{-1}(C)) \leq \alpha\} ,
\end{equation*}
and hence $C_R \leq C_L$. Also note that, for each $\rho \in [0,1]$,
\begin{equation*}
    \sup\{C: \Phi(-g_{\rho, r}^{-1}(C)) \leq \alpha\}  =  \sup\{C: C \leq g_{\rho, r}\left(- \Phi^{-1}(\alpha)\right)\} = g_{\rho, r}\left(- \Phi^{-1}(\alpha)\right) < \infty.
\end{equation*}
Therefore, there must exist one $\rho^\star$ such that 
\begin{equation*}
C_R = \sup\{C: \Phi(-g_{\rho^\star, r}^{-1}(C)) \leq \alpha\} \geq C_L.
\end{equation*}
Therefore, $C_L=C_R$. Combining with \eqref{eq1}, finally we conclude
\begin{equation}
    c_{r}(m,\alpha) = \left(\inf_{\rho\in[0,1]} C_{\rho}\right)^{\frac1r},
\end{equation} 
where $C_\rho:= \sup\{C: \Phi(-g_{\rho, r}^{-1}(C)) \leq \alpha\}$.
}

Using the monotonicity of $g_{\rho,r}(x)$ with regard $x$, and the fact that $g_{\rho, r}(x)$ decreases with $\rho$ when $\rho>0, x < 0$ and $r\geq 0$ (easy to verify that the derivative with regard $\rho$ is always negative), we further have
\begin{equation}
    C_\rho = \sup\{C: C\leq g_{\rho, r}\left(-\Phi^{-1}(\alpha)\right)\} = g_{\rho, r}\left(-\Phi^{-1}(\alpha)\right),
\end{equation}
therefore $c_r(m,\alpha)$ can be simplified as
\begin{align}\label{Cr1}
 c_r(m,\alpha) = \left(\inf_{\rho \in [0,1]}g_{\rho, r}\left(-\Phi^{-1}(\alpha)\right)\right)^{\frac1r} = \min{\left\{ \alpha, \left(\frac{r}{r+1}\right)^{\frac1r}\right\}}.
\end{align}
Similarly, for $-1<r \leq 0$, as $c:=C^{\frac1r}$ decreases with $C$, we now have the closed expression
\begin{align*}
  c_r(m,\alpha) = \left(\inf \left\{C: \sup_{\rho \in [0,1]} \Phi(-g_{\rho, r}^{-1}(C)) \leq \alpha\right\}\right)^{\frac1r} = \left(\sup_{\rho \in [0,1]} C_\rho \right)^{\frac1r}, 
\end{align*}
where $C_\rho:= \inf\{C:  \Phi(-g_{\rho, r}^{-1}(C)) \leq \alpha\}$. Using the monotonicity of $g_{\rho,r}(x)$ with regard $x$, and the fact that $g_{\rho, r}(x)$ decreases with $\rho$ when $\rho>0, x < 0$ and $r< 0$ (easy to verify that the derivative with regard $\rho$ is always negative), we further have
\begin{equation}
    C_\rho = \inf\{C: C\geq g_{\rho, r}\left(-\Phi^{-1}(\alpha)\right)\} = g_{\rho, r}\left(-\Phi^{-1}(\alpha)\right).
\end{equation}
Therefore,
\begin{align}\label{Cr2}
c_r(m,\alpha) & = \left(\sup_{\rho \in [0,1]}{g_{\rho, r}\left(-\Phi^{-1}(\alpha)\right)}\right)^{\frac1r}.
\end{align}
\noindent
Finally, we have that,
\begin{align}
\widetilde{\alpha}(\rho, r, \alpha) &= \limsup_{m\to\infty}\widetilde{\alpha}_m(\rho, r, c_r(m,\alpha))\nonumber\\
&= \begin{cases}
\Phi(-g_{\rho, r}^{-1}(\alpha^r)), & \textnormal{if } r>0;\\
\Phi(-g_{\rho, r}^{-1}(c_r(m,\alpha)^r), &\textnormal{if } -1\leq r\leq 0.
\end{cases}
\end{align}
And 
\begin{align}\label{Cr}
    c_r(m,\alpha) = 
    \begin{cases}
    \min{\{ \alpha, \left(\frac{r}{r+1}\right)^{\frac1r}\}}, &\textnormal{if } r>0;\\
    \left(\sup_{\rho \in [0,1]}g_{\rho, r}\left(-\Phi^{-1}(\alpha)\right)\right)^{\frac1r}&\textnormal{if } -1\leq r\leq 0.
    \end{cases}
\end{align}

\paragraph{(c) \& (d): $\bm{r \leq -1}$.}

When $r \leq -1$, things get a bit tricky, since according to \lemref{cdf}, $\EEst{p_i^r}{Z_0=z_0}$ may not exist. In the following, we will use the stable law stated in \lemref{stablelaw} to derive the asymptotic behaviour of $\widetilde{\alpha}_m(\rho, r, c)$ for $r < 0$. 
\begin{lemma}(Generalized LLN \citep{uchaikin2011chance})\label{lem:stablelaw}
	Consider a sequence of i.i.d random variables $X_1, X_2, \dots, X_m$ which shares the same distribution with $X$, where $X$ has support on $[1,\infty]$ and density $f$ satisfying the following: 
	\[
	f(x) = O(x^{-(\beta+1)}),\quad \textnormal{as}\ x \to \infty\ \textnormal{with}\ \beta >0.
	\]
	Denote $\overline{X}_m := \frac 1m \sum_{i=1}^{m} X_i$, we have that
	\begin{itemize}
		\item[(a)] if $\ 0<\beta < 1$, then $\  m^{1-\frac1\beta}\overline{X}_m \stackrel{d}{\to} Y$;
		\item[(b)] if $\ \beta = 1$, then $\ \overline{X}_m - \log{m} \stackrel{d}{\to} Y$;
		\item[(c)] if $\ 1 < \beta < 2$, then $\  m^{1-\frac1\beta} (\overline{X}_m - \EE{X}) \stackrel{d}{\to} Y$;
		\item[(d)] if $\ \beta \geq 2$, then $\   \overline{X}_m \stackrel{d}{\to} \EE{X}$,
	\end{itemize}
	where $Y$ is some random variable that shares the same tail behaviour with $X$.
\end{lemma}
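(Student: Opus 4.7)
The plan is to prove Lemma~\ref{lem:stablelaw} via characteristic functions and the classical theory of domains of attraction of stable laws. The hypothesis $f(x)=O(x^{-(\beta+1)})$ as $x\to\infty$ is equivalent to $\mathrm{Pr}(X>x)=O(x^{-\beta})$, placing $X$ in the domain of attraction of a $\beta$-stable law when $\beta\in(0,2)$ and guaranteeing finite mean once $\beta>1$. Writing $\psi_X(t):=\log\mathbb{E}[e^{itX}]$ and expanding around $t=0$ via a Fubini/Karamata computation, one obtains $\psi_X(t)=-c|t|^\beta(1+o(1))$ for $\beta\in(0,1)$; $\psi_X(t)=it\,\mathbb{E}[X\mathbf{1}_{X\le 1/|t|}]-c|t|(1+o(1))$ for $\beta=1$; and $\psi_X(t)=it\,\mathbb{E}[X]-c|t|^\beta(1+o(1))$ for $\beta\in(1,2)$, where $c>0$ is read off from the implicit constant in the tail.

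Next, by independence $\psi_{S_m}(t)=m\,\psi_X(t)$ with $S_m:=\sum_{i=1}^m X_i$, so after rescaling by $a_m$ and shifting by $b_m$ the exponent becomes $m\,\psi_X(t/a_m)-itb_m/a_m$. I would choose $a_m=m^{1/\beta}$ with $b_m=0$, $m\log m$, and $m\,\mathbb{E}[X]$ in cases (a)--(c) respectively, tuned so that the divergent linear or logarithmic pieces of $m\,\psi_X(t/a_m)$ are exactly cancelled by $-itb_m/a_m$. What remains converges pointwise in $t$ to $-c|t|^\beta$, the log-characteristic function of a $\beta$-stable law. L\'{e}vy's continuity theorem then delivers $(S_m-b_m)/a_m\stackrel{d}{\to}Y$, and dividing through by $m$ recovers the normalizations $m^{1-1/\beta}\bar X_m$, $\bar X_m-\log m$, and $m^{1-1/\beta}(\bar X_m-\mathbb{E}[X])$ stated in (a)--(c). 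Case (d) then follows directly from the classical weak law of large numbers, since $\mathrm{Pr}(X>x)=O(x^{-\beta})$ with $\beta\ge 2$ implies $X$ is integrable.

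The main obstacle is the borderline case $\beta=1$, where no single-scale rescaling suffices: the truncated mean $\mathbb{E}[X\mathbf{1}_{X\le s}]$ itself grows like $c\log s$, which is what forces the $-\log m$ additive centering in (b), and the cancellation between the truncation scale and the rescaling scale must be tracked carefully. A secondary technical hurdle is verifying that the $o(1)$ remainders in the small-$t$ expansion of $\psi_X$ are uniform enough in $t$ to control $m\,\psi_X(t/a_m)$ on compact sets; this is exactly what Potter's bounds for regularly varying functions provide. The limit $Y$ is then a (totally one-sided, since $X\ge 1$) $\beta$-stable random variable whose scale parameter is determined by the constant in $f(x)=O(x^{-(\beta+1)})$, matching the statement that $Y$ inherits the tail behaviour of $X$.
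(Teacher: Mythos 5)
The paper never proves this lemma: it is imported verbatim as a known result, attributed to \citet{uchaikin2011chance}, and used as a black box inside the proof of Theorem~\ref{thm:asymh}. So there is no in-paper argument to compare yours against; what you have written is the standard self-contained characteristic-function proof of convergence to (totally skewed) stable laws, and the skeleton is correct: the small-$t$ expansion of $\psi_X$ via Karamata/Fubini, the choice $a_m=m^{1/\beta}$ with centering $b_m=0$, $m\log m$, $m\,\EE{X}$ in cases (a)--(c), Potter's bounds for uniformity of the remainders on compact $t$-sets, L\'evy continuity, and the weak law for $\beta\geq 2$. Your identification of $\beta=1$ as the delicate case, driven by the logarithmic growth of the truncated mean, is exactly right.

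Two points deserve explicit attention rather than the silent treatment they get in both the lemma statement and your sketch. First, the hypothesis $f(x)=O(x^{-(\beta+1)})$ is only an upper bound, and an upper bound does not place $X$ in the domain of attraction of a $\beta$-stable law: if the tail is strictly lighter (say bounded support), then for $\beta<1$ the quantity $m^{1-1/\beta}\overline{X}_m$ converges to the degenerate limit $0$, which does not have ``the same tail behaviour as $X$.'' Your step ``where $c>0$ is read off from the implicit constant in the tail'' tacitly upgrades $O$ to an exact asymptotic $f(x)\sim c\,\beta\,x^{-(\beta+1)}$ (equivalently $\Pr(X>x)\sim c\,x^{-\beta}$, i.e., regular variation); you should state this as the hypothesis you are actually using, since the whole computation $\psi_X(t)=-c|t|^\beta(1+o(1))$ collapses without a matching lower bound. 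Second, in case (b) the centering $\overline{X}_m-\log m$ converges only when the tail constant equals $1$ (i.e., $\Pr(X>x)\sim x^{-1}$); for a general constant $c$ the correct centering is $c\log m$. This happens to hold in the paper's application (there $X=p^{-1}$ with $p$ uniform, so $\Pr(X>x)=x^{-1}$ exactly), but as a standalone lemma your $b_m=m\log m$ needs that normalization made explicit. Neither issue breaks your argument under the intended reading; both are places where the proof must say what the statement leaves vague.
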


\noindent
Then, for $r \leq -1$, from \lemref{cdf}, we have $\beta = \frac{1}{(\rho-1)r}$. Let
\begin{align}
C(\alpha, r, m, \rho) := \begin{cases}
C_{\alpha, r} m^{-1+(\rho-1)r}, & \textnormal{if}\ 0 \leq \rho < 1 + \frac1r;\\
C_{\alpha, r} + \log{m} , & \textnormal{if}\ \rho = 1 + \frac1r;\\
C_{\alpha, r} m^{-1+(\rho-1)r} + \EEst{p_1^r}{Z_0= z_0}, & \textnormal{if}\ 1+\frac{1}{r} < \rho \leq 1+\frac{1}{2r};\\
C_{\alpha, r} + \EEst{p_1^r}{Z_0= z_0}, & \textnormal{if}\ 1+\frac{1}{2r} < \rho \leq 1,
\end{cases}
\end{align}
where $C_{\alpha, r}$ is some constant that depends only on $\alpha$ and $r$ that we will specify later. Using \lemref{stablelaw}, we have
\begin{align}
 &\quad \lim_{m\to\infty}\PPst{\frac1m \sum_{i=1}^m p_i^r \geq C(\alpha, r, m, \rho)}{ Z_0 = z_0}  \nonumber\\&= \PPst{Y \geq C_{\alpha, r}}{Z_0 = z_0} \stackrel{(*)}{=} \PPst{p_1^r \geq C_{\alpha, r}}{Z_0 = z_0} + o(1)\nonumber\\
& = F_{r,\rho,z_0}(C_{\alpha, r}) + o(1) \quad \textnormal{as }\ \alpha \to 0,
\end{align}
where $Y$ is the random variable comes from the limitation in \lemref{stablelaw}, which shares the same tail behaviour of $p_1^r$, therefore we have the approximation $(*)$. 

Recalling definitions in \eqref{Hrhom}, \eqref{alpha} of the main paper, our goal is to find $c$ such that
\[
\sup_{\rho\in[0,1]}\limsup_{m\to\infty}\widetilde{\alpha}_m(\rho, r, c) \leq \alpha,
\]
or equivalently find $C$ such that
\[\sup_{\rho\in[0,1]}\limsup_{m\to\infty}\widetilde{\alpha}_m(\rho, r, C^{\frac1r}) \leq \alpha.
\]
Note that $C(\alpha, r, m, \rho)$ is monotonically non-increasing in $\rho$, and $C(\alpha, r, m, 0)$ dominates $C(\alpha, r, m, \rho)$ for any \smash{$0<\rho\leq 1$}. Therefore, to calibrate for arbitrary $\rho \in [0,1]$, that is to find a critical value that does not depend on $\rho$, we have no choice but let $C = C(\alpha, r, m, 0)$, and hence 
\begin{align*}
   &\quad \quad  \sup_{\rho\in[0,1]}\limsup_{m\to\infty}\widetilde{\alpha}_m(\rho, r, C(\alpha, r, m, 0)^{\frac1r})  = \widetilde{\alpha}_m(\rho, r, C(\alpha, r, m, 0)^{\frac1r})\\
   & = \mathbb{E}_{Z_0}\left[1- F_{r,0, Z_0}(C_{\alpha, r})\right] = \mathbb{E}_{Z_0}\left[\Phi\left(\Phi^{-1}(C_{\alpha, r}^\frac1r)\right)\right] = C_{\alpha, r}^\frac1r \leq \alpha,
\end{align*}
which indicates we should set $C_{\alpha, r} = \alpha^r$ to achieve the upper bound. 

Therefore we have 
\begin{align}
c_r(m,\alpha) = \left(C(\alpha, r,m,0)\right)^\frac1r = 
\begin{cases}
\alpha m^{\frac{1}{|r|}-1}, \quad \textnormal{if}\ r <-1;\\
\frac{\alpha}{1 + \alpha m}, \quad \textnormal{if}\ r = -1,\\
\end{cases}
\end{align}
and correspondingly
\[
\widetilde{\alpha}(\rho, r, \alpha) = \limsup_{m\to\infty}\widetilde{\alpha}_m(\rho, r, c_r(m,\alpha)) = \alpha \mathbb{I}\{\rho=0\},
\]
where the last equality is true due to the nature of stable law, where the tail behavior determines the rate of growth, and the mismatch of the growth rate leads to degenerate asymptotic probability.

Here we finish the proof for \thmref{asymh}.

\section{Proof for \thmref{asymp1}}\label{app:asymppf1}

In the following, we are interested in calculating the asymptotic power using the calibrated threshold $c_r(m,\alpha)$ derived in \thmref{asymh}. In particular, the power can rewritten as
\begin{equation}
 \beta_{\mu_m,\pi_m,\rho}(r, \alpha) :=  \PP{\textnormal{sign}(r)\frac1m \sum_{i=1}^m p_{mi}^r \leq \textnormal{sign}(r) C_r(m,\alpha)},
\end{equation}
where $C_r(m,\alpha)= c_r(m,\alpha)^r$, and $p_{mi}=\Phi^{-1}(-X_{mi})$ for all $i$. Using similar decomposition as in the proof of \thmref{asymh}, we have that, for all $i = 1, 2, \dots, m$,
\begin{align}\label{decomp}
& X_{mi} = \mu_{mi} + \sqrt{\rho}\ Z_{0} + \sqrt{1-\rho}\ Z_{i}, \nonumber\\
& p_{mi} = \Phi(-X_{mi}) = \Phi\left(-\mu_{mi}-\sqrt{\rho}\ Z_{0} - \sqrt{1-\rho}\ Z_{i}\right),
\end{align}
where variable $Z_{0} \sim N(0,1)$ , $Z_{i} \stackrel{\text{iid}}{\sim} N(0,1)$, $\{Z_{0}\} \independent{\{\mu_{mi}, Z_i\}_{i=1}^m}$, and $\mu_{mi} \stackrel{iid}{\sim}\mu_m B_{mi}$ with \smash{$B_{mi}\stackrel{\text{iid}}{\sim} \textnormal{Bernoulli}(\pi_m)$} for all $i = 1, 2, \dots ,m$. Also, we have the conditional independence: 
\begin{equation}
    p_{m1}, p_{m2}, \dots, p_{mm} \textnormal{ are independent conditioning on } Z_{0}.
\end{equation}
Then the asymptotic power is given by
\begin{align}\label{G1}
\ \ \ \ \  \lim_{m\to\infty}\beta_{\mu_m,\pi_m,\rho}(r, \alpha) 
&= \mathbb{E}_{Z_0}\left[\lim_{m\to \infty}\PPst{\textnormal{sign}(r)\frac1m \sum_{i=1}^m p_{mi}^r \leq \textnormal{sign}(r)C_r(m,\alpha)}{Z_0}\right].
\end{align}
When $r>0$, we can use the law of large numbers of triangular array, that is
\begin{equation}
    \sup_{m}\EEst{p_{mi}^{2r}}{Z_0=z_0}< \infty\quad \Rightarrow \quad {\color{black} \frac1m \sum_{i=1}^m p_{mi}^r  - \EEst{p_{mi}^r}{Z_0=z_0} \stackrel{p}{\to} 0}
\end{equation}
almost surely for all possible value of $z_0$.
Then we get
\begin{align}\label{G2}
& \ \ \ \ \lim_{m\to \infty}\PPst{\frac1m \sum_{i=1}^m p_{mi}^r \leq C_r(m,\alpha)}{Z_0}  = \lim_{m\to \infty}\PP{\EEst{p_{m1}^r}{Z_0} \leq  \alpha^r} \nonumber\\
& = \lim_{m\to \infty}\PP{\pi_{m}\EEst{p_{m1}^r}{Z_0, \mu_{m1} = \mu_m} + (1-\pi_{m})\EEst{p_{m1}^r}{Z_0, \mu_{m1} =0}\leq  \alpha^r}.
\end{align}
Combining \eqref{G1} and \eqref{G2}, we have that, when $r>0$, 
\begin{align}
\lim_{m\to\infty}\beta_{\mu_m,\pi_m,\rho}(r, \alpha) &= \PP{ \pi g_{\rho, r} (Z_0 + \frac{\mu}{\sqrt{\rho}}) + (1-\pi) g_{\rho, r} (Z_0) \leq \alpha^r}
\end{align}
where $g_{\rho, r}$ is defined in \eqref{murho1}. From this expression, the following cases can be specified,
\begin{itemize}
    \item if $\pi = 1$, then \begin{align}\lim_{m\to\infty}\beta_{\mu_m,\pi_m,\rho}(r, \alpha) = \begin{cases}
    1, & \quad \text{if}\ \mu = \infty;\\
    \Phi\left(-g_{\rho,r}^{-1}(\alpha^r)+\frac{\mu}{\sqrt{\rho}}\right), & \quad \text{if}\ 0< \mu <\infty;\\
    \widetilde{\alpha}(\rho, r, \alpha), & \quad \text{if}\ \mu =0.
    \end{cases}
    \end{align}
    \item if $0< \pi< 1$, then \begin{align}\lim_{m\to\infty}\beta_{\mu_m,\pi_m,\rho}(r, \alpha) = \begin{cases}
    \Phi\left(-g_{\rho,r}^{-1}(\frac{\alpha^r}{1-\pi})\right), & \quad \text{if}\ \mu = \infty;\\
    \PP{ \pi g_{\rho, r} (Z_0 + \frac{\mu}{\sqrt{\rho}}) + (1-\pi) g_{\rho, r} (Z_0) \leq \alpha^r}, & \quad \text{if}\ 0< \mu <\infty;\\
    \widetilde{\alpha}(\rho, r, \alpha), & \quad \text{if}\ \mu =0.
    \end{cases}
    \end{align}
    \item if $\pi = 0$, then  $\lim_{m\to\infty}\beta_{\mu_m,\pi_m,\rho}(r, \alpha) \equiv \widetilde{\alpha}(\rho, r, \alpha)$, no matter what value that $\mu$ takes. 
    \end{itemize}
Therefore, we complete the proof.

\section{Proof for \thmref{r-1weak}}\label{app:r-1weakpf}

When $ r \leq-1 $, We utilize the following results: as long as the triangular array $\{Y_{mi}, i = 1, \dots, i_m\}$ satisfy the uniformly asymptotically negligible (UAN) condition, that is for any $\epsilon >0$,
\begin{equation}\label{UAN}
\lim_{m\to\infty} \max_{i}\PP{|Y_{mi}|>\epsilon} = 0,    
\end{equation}
then we have that, $\lim_{m\to\infty} \sum_{i} Y_{mi}$ converge to an infinitely divisible distribution under certain conditions. The specific argument is formally stated in the following \lemref{infdivi}.

\begin{lemma}\label{lem:infdivi} (Theorem 3.2.2 in \citep{key0062975m})
Consider an triangular array $\{Y_{mk}, k = 1, \dots, k_m\}$, such that the UAN condition is fulfilled,
that is for any $\epsilon >0$
\begin{equation}\label{uan}
    \lim_{m\to\infty} \max_{k}\mu_{mk}\{|y|>\epsilon\} = 0,
\end{equation}
where $\mu_{mk}$ is the distribution function for $Y_{mk}$, and denote $S_m := Y_{m1} + \dots + Y_{m,k_m}$. 

Then there exists a deterministic sequence $a_m$ such that sequence $S_m - a_m$ converges weakly to an infinitely divisible random variable $Y$ if and only if the following conditions are fulfilled:



    \begin{enumerate}
        \item for any $A = (-\infty, x)$ with $x<0$, and $A = (x, \infty)$ with $x>0$ such that $\nu(\partial A) = 0$,
        \begin{equation}\label{sigma}
            \nu(A) := \lim_{m\to \infty}\sum_{k = 1}^{k_m} \mu_{mk}(A)
        \end{equation}
        is a L\'evy measure, i.e. a $\sigma$-finite Borel measure on $\mathbb{R}\setminus{0}$ such that \smash{$\int_{\mathbb{R}\setminus{0}} \min\{1, x^2\}\nu(d x) < \infty$}.
        
        \item moreover, 
        \begin{align}\label{sigma}
            & \lim_{\tau \to 0} \limsup_{m\to \infty}\sum_{k=1}^{k_m} \textnormal{Var}\left(Z_{mk}\ones\{|Z_{mk}|<\tau\}\right)\nonumber\\
            = & \lim_{\tau \to 0} \liminf_{m\to \infty}\sum_{k=1}^{k_m}\textnormal{Var}\left(Z_{mk}\ones\{|Z_{mk}|<\tau\}\right) = \sigma^2 < \infty.
        \end{align}
        
    \end{enumerate}
    Particularly, Y has the characteristic exponent
        \begin{equation}
        \phi(t) = - \frac12 \sigma^2 t^2 + \int_{\mathbb{R}\setminus\{0\}} (e^{itx} - 1 - itx\ones\{|x|\leq 1\}) \nu (dx),  
    \end{equation}
    and $a_m$ can be choosen by
    \begin{equation}\label{an}
        a_m = \sum_{k=1}^{k_m} \int _{|x|< 1} x \mu_{nk}(dx) + o(1)
        \end{equation}
    given that $\nu(\{x:|x|=1\}) = 0$.    \\
\end{lemma}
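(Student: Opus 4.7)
The statement is the classical Gnedenko–Kolmogorov characterization of triangular-array convergence to infinitely divisible laws, so the plan is to follow the standard characteristic function route and identify the L\'evy–Khintchine triple. Let $\phi_{mk}(t) := \int e^{itx}\mu_{mk}(dx)$. The characteristic function of $S_m - a_m$ factors as $e^{-ita_m}\prod_{k=1}^{k_m}\phi_{mk}(t)$. The UAN hypothesis \eqref{uan} implies $\max_k|\phi_{mk}(t)-1|\to 0$ for every fixed $t$, so one may pick the principal branch of the logarithm and use $\log(1+z)=z+O(|z|^2)$ to obtain
\begin{equation*}
\log\mathbb{E} e^{it(S_m-a_m)} \;=\; \sum_{k=1}^{k_m}\bigl(\phi_{mk}(t)-1\bigr) \;-\; it a_m \;+\; o(1),
\end{equation*}
where the error term is controlled by UAN together with the boundedness of $\sum_k|\phi_{mk}(t)-1|$, a bound itself supplied by conditions 1 and 2.

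Next I would split each summand via the truncation at $|x|=1$:
\begin{equation*}
\phi_{mk}(t)-1 \;=\; \int_{|x|>1}\!(e^{itx}-1)\,\mu_{mk}(dx) \;+\; \int_{|x|\le 1}\!(e^{itx}-1-itx)\,\mu_{mk}(dx) \;+\; it\!\int_{|x|\le 1}\! x\,\mu_{mk}(dx).
\end{equation*}
Choosing $a_m$ as in \eqref{an} cancels the linear-in-$t$ drift, leaving only the large-jump and small-jump integrals. For the large-jump piece, condition 1 gives weak convergence of $\sum_k \mu_{mk}$ to $\nu$ on $\{|x|>\varepsilon\}$ at every continuity radius, and since $(e^{itx}-1)\mathbbm{1}_{|x|>1}$ is bounded and continuous on $\mathbb{R}\setminus\{0\}$, its integral converges to $\int_{|x|>1}(e^{itx}-1)\,\nu(dx)$. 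For the small-jump piece I would Taylor expand $e^{itx}-1-itx = -\tfrac12 t^2 x^2 + r(t,x)$ with $|r(t,x)|\le C|t|^3|x|^3$ on $|x|\le 1$; summing in $k$, the quadratic contribution is $-\tfrac12 t^2 \sum_k\int_{|x|\le 1}x^2\mu_{mk}(dx)$, which by condition 2 decomposes (letting $\tau\downarrow 0$) into a Gaussian component $-\tfrac12\sigma^2 t^2$ plus a residual $\int_{0<|x|\le 1}(-\tfrac12 t^2 x^2)\,\nu(dx)$; the cubic remainder is handled by dominated convergence using $\int\min(1,x^2)\nu(dx)<\infty$. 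Assembling the pieces yields the claimed characteristic exponent $\phi(t)$, and Lévy's continuity theorem upgrades pointwise convergence of characteristic functions to weak convergence $S_m - a_m \Rightarrow Y$.

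For the converse direction, I would assume $S_m-a_m \Rightarrow Y$ with $Y$ infinitely divisible and extract the triple $(\sigma^2,\nu)$ from the limit. Under UAN, tightness of the measures $\nu_m := \sum_k \mu_{mk}$ restricted to $\{|x|>\varepsilon\}$ follows from boundedness of the corresponding contributions to the characteristic function; any subsequential limit is forced to coincide with $\nu$ by uniqueness of the L\'evy–Khintchine representation, yielding condition 1. Condition 2 is then recovered by isolating the quadratic-in-$t$ term in the log characteristic function and passing $\tau\downarrow 0$ after $m\to\infty$, a standard diagonal argument; the specific form \eqref{an} for $a_m$ is forced (up to $o(1)$) by matching the imaginary linear-in-$t$ term under $\nu(\{|x|=1\})=0$.

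The main technical obstacle is the small-jump Gaussian identification, namely showing that the $\sum_k \int_{|x|\le 1} x^2 \mu_{mk}(dx)$ contribution splits cleanly into the pure Gaussian variance $\sigma^2$ and the absolutely integrated L\'evy tail $\int_{0<|x|\le 1} x^2\,\nu(dx)$, uniformly as the cutoff $\tau\downarrow 0$. Handling the boundary $|x|=1$ so that the truncation level is immaterial (and so that $a_m$ is well-defined up to $o(1)$), and controlling the cubic Taylor remainder uniformly in $m$ via the combined conditions, are the delicate points; once these are in place, both implications follow by repeated application of Lévy's continuity theorem and uniqueness of the characteristic triple.
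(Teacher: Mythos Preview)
The paper does not prove this lemma; it is quoted verbatim as Theorem 3.2.2 from an external reference and is used as a black-box tool in the proof of \thmref{r-1weak}. So there is no ``paper's own proof'' to compare against. Your sketch is the standard characteristic-function route to the Gnedenko--Kolmogorov criterion (pass to log-characteristic functions under UAN, truncate at a fixed level, read off the L\'evy measure from the large-jump sums and the Gaussian part from the limiting truncated second moments, then invoke L\'evy continuity and uniqueness of the L\'evy--Khintchine triple for the converse), and at the level of an outline it is correct. If you actually had to fill in the details, the two places that require care are exactly the ones you flag: controlling the $O(|z|^2)$ error in $\log(1+z)$ uniformly in $k$ and $m$ (this needs not just $\max_k|\phi_{mk}(t)-1|\to 0$ but also a bound on $\sum_k|\phi_{mk}(t)-1|^2$, which does follow from conditions 1 and 2 but is not entirely automatic), and the clean separation of the Gaussian variance $\sigma^2$ from the integrated tail $\int_{0<|x|\le 1}x^2\,\nu(dx)$ via the double limit $\tau\downarrow 0$ after $m\to\infty$. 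For the purposes of this paper, however, the lemma is simply cited, so no proof is expected.
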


\noindent
In our case, let 
\[
Y_{mi} = \frac{1}{m^{-r}} (p_{mi}^r - a_{r,m}) | Z_0,
\] 
where $a_{r,m} = 0$ if $r < -1$, and $a_{r,m} = \log{m}$ if $r=-1$. We firstly check the UAN condition \eqref{UAN}. Note that 
\begin{align}\label{uan}
& \ \ \ \ \ \ \ \lim_{m\to\infty} \max_{i}\PPst{|\frac{1}{m^{-r}} (p_{mi}^r - a_{r,m})|>\epsilon}{Z_0 = z_0} = \lim_{m\to\infty}\PPst{|\frac{1}{m^{-r}}(p_{mi}^r - a_{r,m})|>\epsilon}{Z_0} \nonumber\\
&= \lim_{m\to\infty}\PPst{ p_{mi}^r > m^{-r}\epsilon + a_{r,m}}{Z_0} + \PPst{p_{mi}^r <-m^{-r}\epsilon +a_{r,m}}{Z_0} \nonumber\\
&= \lim_{m\to\infty}\pi_m\PPst{ p_{mi}^r > m^{-r}\epsilon + a_{r,m}}{Z_0,\mu_{mi} = \mu_m}+ (1-\pi_m)\PPst{ p_{mi}^r > m^{-r}\epsilon + a_{r,m}}{Z_0,\mu_{mi} = 0} \nonumber\\ & \ \ \ \ \ \ \ \ \ + \pi_m\PPst{p_{mi}^r <-m^{-r}\epsilon +a_{r,m}}{Z_0, \mu_{mi} = \mu_m} + (1-\pi_m)\PPst{p_{mi}^r <-m^{-r}\epsilon +a_{r,m}}{Z_0, \mu_{mi} = 0} \nonumber\\
& = \lim_{m\to\infty}\pi_m\Phi\left(\frac{\Phi^{-1}((m^{-r}\epsilon + a_{r,m})^\frac1r) + \mu_m+\sqrt{\rho}z_0}{\sqrt{1-\rho}}\right)+ (1-\pi_m)\Phi\left(\frac{\Phi^{-1}((m^{-r}\epsilon + a_{r,m})^\frac1r) +\sqrt{\rho}z_0}{\sqrt{1-\rho}}\right) \nonumber\\& \ \ \ \ \ \  + \pi_m\Phi\left(-\frac{\Phi^{-1}((-m^{-r}\epsilon +a_{r,m})^\frac1r) + \mu_m+\sqrt{\rho}z_0}{\sqrt{1-\rho}}\right) + (1-\pi_m)\Phi\left(-\frac{\Phi^{-1}((-m^{-r}\epsilon +a_{r,m})^\frac1r) + \sqrt{\rho}z_0}{\sqrt{1-\rho}}\right).
\end{align}
For $r<-1$, we have $a_{r,m} = 0$, and thus,
\eqref{uan} can be simplified as
\begin{align}\label{uan-1}
   \lim_{m\to\infty}\pi_m\Phi\left(\frac{\Phi^{-1}((m^{-r}\epsilon)^\frac1r) + \mu_m+\sqrt{\rho}z_0}{\sqrt{1-\rho}}\right) = \lim_{m\to\infty}\pi_m\Phi\left(\frac{\Phi^{-1}(\epsilon^\frac1r\frac1m) + \mu_m+\sqrt{\rho}z_0}{\sqrt{1-\rho}}\right),
\end{align}
while on the other hand, for $r=-1$, we have $a_{r,m}=\log{m}$, and
\eqref{uan} can also be simplified as
\begin{align}\label{uan-2}
   \lim_{m\to\infty}\pi_m\Phi\left(\frac{\Phi^{-1}((m\epsilon + \log{m})^\frac1r) + \mu_m+\sqrt{\rho}z_0}{\sqrt{1-\rho}}\right) = \lim_{m\to\infty}\pi_m\Phi\left(\frac{\Phi^{-1}(\epsilon^\frac1r\frac1m) + \mu_m+\sqrt{\rho}z_0}{\sqrt{1-\rho}}\right).
\end{align}
Therefore, in order to make \eqref{uan-1} and \eqref{uan-2} goes to zero, we only need to make $\mu_m$ grows slower than \smash{$|\Phi^{-1}(\frac{1}{m})| = O(\sqrt{\log{m}})$}, that is $\mu_m = o(\sqrt{\log{m}})$. \\

\noindent
Returning to the proof of the theorem, we first consider the case $\rho>0$, under which we will prove that for each $i$, $Y_{mi} =o_p(1)$ when $r<-1$, and $Y_{mi} = o(\log m)$ when $r = -1$, as $m \to \infty$. We prove this by applying \lemref{infdivi}, during which we check the condition 1 and 2 in it. 

As for condition 1 in \lemref{infdivi} for $r\leq-1$, defining $\nu(x) := 1-\lim_{m\to\infty} m \PP{Y_{mi} > x}$ for all $x>0$, it can be simplified to checking that
\begin{align}\label{levy}
    1-\nu(1) + \int_{0<x<1}x^2\nu(dx) < \infty.
\end{align}
Note that 
\begin{align}
    \PP{Y_{mi} > x} & = \PPst{m^r P_{mi}^r > x }{Z_0 = z_0} = \PPst{P_{mi} < \frac{x^{\frac1r}}{m} }{Z_0 = z_0}\nonumber\\
    & = \pi_m\PPst{P_{mi} < \frac{x^{\frac1r}}{m} }{Z_0 = z_0, \mu_{mi} = \mu_m} + (1-\pi_m)\PPst{P_{mi} < \frac{x^{\frac1r}}{m} }{Z_0 = z_0, \mu_{mi} = 0} \nonumber\\
    & = \pi_m\Phi\left(\frac{\Phi^{-1}((x^{\frac1r}/m) + \mu_m +\sqrt{\rho}z_0}{\sqrt{1-\rho}}\right) + (1-\pi_m)\Phi\left(\frac{\Phi^{-1}((x^{\frac1r}/m) +\sqrt{\rho}z_0}{\sqrt{1-\rho}}\right) \nonumber\\
    & = \pi_m \Phi\left(\frac{-\sqrt{2\log{(m x^{-\frac1r})}}  +\sqrt{\rho}z_0}{\sqrt{1-\rho}}\right) + (1-\pi_m) \Phi\left(\frac{-\sqrt{2\log{(m x^{-\frac1r})}} +\sqrt{\rho}z_0}{\sqrt{1-\rho}}\right) + o(1)\nonumber\\
    & = \Phi\left(\frac{-\sqrt{2\log{(m x^{-\frac1r})}}+\sqrt{\rho}z_0}{\sqrt{1-\rho}}\right) + o(1) = O(m^{-\frac{1}{1-\rho}}x^{\frac{1}{(1-\rho)r}}),
\end{align}
therefore, we have 
\begin{equation}\label{cond1-1}
    \nu(x) = 1-\lim_{m\to\infty} m\PP{Y_{mi}>x} = 1-x^{\frac{1}{(1-\rho)r}}\lim_{m\to\infty} m^{-\frac{\rho}{1-\rho}} = 0,
\end{equation}
since $\rho>0$.
Therefore \eqref{levy} is true, and in particular $\nu(x) = 0$ when $\rho > 0$.

Afterwards, we check condition 2 in \lemref{infdivi}, which simplifies to verifying
\begin{equation}
    \lim_{\tau\to 0} \lim_{m\to \infty} m \text{Var}(Y_{mi}\ones\{Y_{mi} < \tau\}) < \infty
\end{equation}
in our setting. Using the similar technique that we will use to calculate $a_m$, i.e. the truncated first moment, we have the following about the truncated second moment for any fixed truncation position $\tau>0$,
\begin{align}
    &  \text{Var}(Y_{mi}\ones\{Y_{mi} < \tau\}) \leq  m\EE{Y_{mi}^2\ones\{Y_{mi} < \tau\}} \nonumber\\
   & = o(m^{1-\frac{1}{1-\rho}}\log^{\frac{1-2r}{2}} (m)) \to 0, \quad \textnormal{as } m \to \infty,\   \textnormal{since } \rho > 0.
\end{align}
Therefore, the limit distribution does not have a normal term when $\rho>0$.\\

\noindent
Lastly, we compute $a_m$ via \eqref{an}, that is
\begin{align}\label{am_comp}
    a_m &= m \EE{Y_{mi}\ones\{Y_{mi}<1\}} = m^{r+1} \EE{P_{mi}^r\ones\{P_{mi}^r<\frac{1}{m^r}\}} \nonumber\\&
    =  -\frac{m^{r+1}}{r\sqrt{1-\rho}} \int_{1}^{m^{-r}} y^{\frac1r}\exp\left(-\frac{\rho \Phi^{-1}(y^\frac1r)^2 + 2 A_m\Phi^{-1}(y^\frac1r) + A_m^2}{2(1-\rho)}\right)dy,
\end{align}
where $A_m = \sqrt{\rho}z_0 + \mu_m = o(\sqrt{\log{m}})$. Let $x = \Phi^{-1}(y^{\frac1r})$, we have that \eqref{am_comp} equals
\begin{align}\label{am_comp_trans}
   & \quad \frac{m^{r+1}}{\sqrt{1-\rho}}\int_{\Phi^{-1}(\frac1m)}^{\infty} \Phi(x)^r \exp{\left(-\frac{x^2 + 2A_m x +A_m^2}{2(1-\rho)}\right)} d x  \nonumber\\ & = \frac{m^{r+1}}{\sqrt{1-\rho}}\left(\int_{\Phi^{-1}(\frac1m)}^1 + \int_{1}^{\infty}\right) \Phi(x)^r \exp{\left(-\frac{x^2 + 2A_m x + A_m^2}{2(1-\rho)}\right)}dx = \frac{m^{r+1}}{\sqrt{1-\rho}}(\mathbb{I}_1 + \mathbb{I}_2).
\end{align}
Using the following well-known Mill's inequality \citep{gordon1941values}, that is for any $u>0$,
\begin{equation}\label{mill}
    \frac{u}{1+u^2}\phi(u) \leq \Phi(-u) \leq \frac{1}{u}\phi(u),  
\end{equation}
we have that 
\begin{align}\label{i1-1}
    \mathbb{I}_1 & \leq  \int_{\Phi^{-1}(\frac1m)}^1 (\frac{-x}{1+x^2})^r \phi(-x)^r \exp{\left(-\frac{x^2 + 2A_m x + A_m^2}{2(1-\rho)}\right)}dx\nonumber\\
    & = \frac{1}{\sqrt{2\pi}}\int_{1}^{-\Phi^{-1}(\frac1m)} (\frac{x}{1+x^2})^r \exp{\left(-\frac{[r(1-\rho)+1] x^2 - 2A_m x + A_m^2}{2(1-\rho)}\right)}dx\nonumber\\
    & = \frac{1}{\sqrt{2\pi}}\int_{1}^{-\Phi^{-1}(\frac1m)} (\frac1x+x)^{-r} \exp{\left(-\frac{[r(1-\rho)+1] x^2 - 2A_m x + A_m^2}{2(1-\rho)}\right)}dx\nonumber\\
    & \leq \frac{2^{-r}}{\sqrt{2\pi}}\int_{1}^{-\Phi^{-1}(\frac1m)} x^{-r} \exp{\left(-\frac{[r(1-\rho)+1] x^2 - 2A_m x + A_m^2}{2(1-\rho)}\right)}dx \nonumber\\ & = \frac{2^s}{\sqrt{2\pi}\exp(c_m)}\int_{1}^{-\Phi^{-1}(\frac1m)} x^s \exp\left(\frac{a}{2} x^2+b_m x\right) dx,
\end{align}
and
\begin{align}\label{i1-2}
    \mathbb{I}_1 & \geq  \int_{\Phi^{-1}(\frac1m)}^1 (\frac{1}{-x})^r \phi(-x)^r \exp{\left(-\frac{x^2 + 2A_m x + A_m^2}{2(1-\rho)}\right)}dx\nonumber\\
    &= \frac{1}{\sqrt{2\pi}\exp(c_m)}\int_{1}^{-\Phi^{-1}(\frac1m)} x^s \exp\left(\frac{a}{2} x^2+b_m x\right) dx,
\end{align}
where $s = -r\geq 1$; $a = \frac{r(\rho-1)-1}{1-\rho} = s-\frac{1}{1-\rho}$; $b_m = \frac{A_m}{1-\rho}>0$; $c_m = \frac{A_m^2}{2(1-\rho)}$. \\

\noindent
Combining \eqref{i1-1} and \eqref{i1-2}, we have that
\begin{align}\label{i1}
    \mathbb{I}_1 = O\left(\exp(c_m)\int_{1}^{-\Phi^{-1}(\frac1m)} x^s \exp\left(\frac{a}{2} x^2+b_m x\right) dx\right).
\end{align}
In the following, we first consider $a>0$, under which case we demonstrate the rate of $\mathbb{I}_1$. Then we argue that the case with $a\leq 0$ will only lead to a slower rate. 

Let $h_m(x) = x^s \exp\left(\frac{a}{2} x^2+b_m x\right)$. When $x>1$, we have
\begin{equation}
    \frac{\partial^2 h_m(x)}{\partial x^2} = \left[(ax+b_m)^2 x^s + a(2s+1)x^s + 2s b_m x^{s-1} + s(s-1)x^{s-2}\right] \exp\left(\frac{a}2x^2 + b_m x\right) \geq 0,
\end{equation}
that is, $h_m$ is convex in $x$ for $x>1$. Plugging into \eqref{i1}, we have 
\begin{align}\label{i11}
 \mathbb{I}_1 &\stackrel{<}{\sim} \frac{2^{s-1}}{\sqrt{2\pi}\exp(c_m)} |\Phi^{-1}(\frac1m)| \left[ \exp\left(\frac{a}{2} + b_m\right) + |\Phi^{-1}(\frac1m)|^s \exp\left(\frac{a}{2}\Phi^{-1}(\frac1m)^2 + b_m \Phi^{-1}(\frac1m)\right)\right]\nonumber\\
 &= o\left(m^{-r-\frac{1}{1-\rho}} \log^{\frac{1-r}{2}}{(m)}\right) \textnormal{ as } m \to \infty.
\end{align}
On the other hand, we have 
\begin{align}\label{i2}
\mathbb{I}_2 &\leq 2^{-r} \int_{1}^{\infty} \exp{\left(-\frac{x^2 + 2A_m x +A_m^2}{2(1-\rho)}\right)}< 2^{-r} \int_{-\infty}^{\infty} \exp{\left(-\frac{(x + A_m)^2 }{2(1-\rho)}\right)}= 2^{-r}{\sqrt {2\pi(1-\rho)}},
\end{align}
using the fact
\[
\int_{-\infty }^{\infty }\exp{(-ax^{2})}dx = {\sqrt {\pi  \over a}},\quad (a>0).
\]
Finally, plugging \eqref{i11} and \eqref{i2} into \eqref{am_comp_trans}, we have that
\begin{align}\label{am}
    a_m = o(m^{1-\frac{1}{1-\rho}}\log^{\frac{1-r}{2}}(m)) \to 0 \quad \text{as}\ m \to \infty,\ \text{since}\ \rho > 0.
\end{align}
Based on the above calculations, we can finally apply \lemref{infdivi} and have
\begin{align}
\sum_{i=1}^m Y_{mi}-a_m \stackrel{p}{\to} 0 \quad \text{for all}\ r\leq -1.
\end{align}
Therefore, when $r<-1$,
\begin{align}
\lim_{m\to\infty}\beta_{\mu_m,\pi_m,\rho}(r, \alpha) & = \lim_{m\to\infty} \beta_m(\rho, r, \alpha) = \lim_{m\to\infty} \EE{ \PPst{\frac1m \sum p_{mi}^r \geq C_r(m,\alpha) }{Z_0} }\nonumber\\ 
& = \lim_{m\to\infty} \EE{ \PPst{\frac{1}{m^{-r}} \sum p_{mi}^r \geq m^{1+r} C_r(m,\alpha) }{Z_0} }\nonumber\\ 
&=  \EE{ \lim_{m\to\infty} \PP{\sum Y_{mi} \geq m^{r+1}\alpha^r m^{-1-r}} } \nonumber \\
&=  \lim_{m\to\infty} \PP{\sum Y_{mi} - a_m  \geq \alpha^r - a_m } = 0;
\end{align}
and similarly when $r=-1$,
\begin{align}
\lim_{m\to\infty}\beta_{\mu_m,\pi_m,\rho}(r, \alpha) & = \lim_{m\to\infty} \beta_m(\rho, \alpha,r) = \lim_{m\to\infty} \EE{ \PPst{\frac{1}{m} \sum p_{mi}^r \geq \frac{1}{\alpha} + \log m}{Z_0} }\nonumber\\ 
& = \lim_{m\to\infty} \PP{ \sum Y_{mi} - a_m \geq \frac{1}{\alpha} + \log m -a_m} = 0.
\end{align}
In conclusion, for $r \leq -1$, we have that, $\beta(\rho,r,\alpha) =0$ as long as $\mu_m = o(\sqrt{\log{m}})$ and $\rho>0$.

On the other hand, recall that in \thmref{asymh} we derive that the calibrated threshold under equicorrelation when $r\leq -1$ in fact equals to that under independence. Therefore when $\rho = 0$, for all $r\leq -1$ we have 
\begin{align}
    \lim_{m\to\infty}\beta_{\mu_m,\pi_m,\rho}(r, \alpha) & = \lim_{m\to\infty} \beta_m(\rho, \alpha,r) = \lim_{m\to\infty} \PP{\frac{1}{m} \sum p_{mi}^r \geq C_r(m,\alpha)}\nonumber\\
    & = \lim_{m\to\infty} \PP{\frac{1}{m} \sum p_{mi}^r \geq C_r(m,\alpha)} = \alpha.
\end{align}

Here we finish the proof for \thmref{r-1weak}.



\section{Proof for \thmref{r-1strong}}\label{app:r-1strongpf}
Using the calibrated threshold $c_r(m,\alpha)$ derived in \thmref{asymh}, we have that 
\begin{align}
    \beta_{\mu_m,\pi_m,\rho}(r, \alpha) & = \PP{\frac1m\sum_{i=1}^{m}p_{mi}^r \geq C_r(m,\alpha)} = \PP{\sum_{i=1}^{m}m^r (p_{mi}^r-a_{rm}) \geq \alpha^r},
\end{align}
where $C_r(m,\alpha) = c_r(m,\alpha)^r$, $a_{rm} = 0$ for $r <-1$, and $a_{rm} = \log{m}$ for $r=-1$. 

Therefore, we only need to prove that $\sum_{i=1}^m m^r (P_{mi}^r-a_{rm}) \to \infty$ with probability one, where $a_{rm} = 0$ for $r <-1$, and $a_{rm} = \log{m}$ for $r=-1$. Since
\begin{align}
    \sum_{i=1}^m m^r (P_{mi}^r-a_{rm}) \geq \max_{i}\{m^r (P_{mi}^r-a_{rm})\} = (m \min\{P_{mi}\})^r-m^r a_{rm},
\end{align}
and with part (a) we have
\begin{align}
    \min_{i}\{P_{mi}\} &= \Phi(-\sqrt{1-\rho} \max_{i}\{Z_i + \mu_{mi}/\sqrt{1-\rho}\} - \sqrt{\rho}Z_0) \nonumber\\
    & = \Phi(-\sqrt{1-\rho} \sqrt{2\log{m}} -  \mu_{m} - \sqrt{\rho}Z_0) + o_p(1)\nonumber\\
    & = O_p(m^{-(\sqrt{(1-\rho)} + \sqrt{c})^2}).
\end{align}
Therefore, we have that 
\begin{align}
    (m \min\{P_{mi}\})^r-m^r a_{rm} = O_p(m^{-r\left((\sqrt{(1-\rho)} + \sqrt{c})^2 -1\right)}) \to \infty,
\end{align}
with probability one, since $ \sqrt{c} > 1-\sqrt{(1-\rho)}$.
Hence we have proved the argument for part (a). Similarly, as for part (b) we have
\begin{align}
    \min_{i}\{P_{mi}\} &= \Phi(-\sqrt{1-\rho} \max_{i}\{Z_i + \mu_{mi}/\sqrt{1-\rho}\} - \sqrt{\rho}Z_0) \\
    &\leq\Phi(-\sqrt{1-\rho} \sqrt{2\gamma\log{m}} - \mu_m - \sqrt{\rho}Z_0) + o_p (1) \\
    & = O_p(m^{-(\sqrt{\gamma (1-\rho)} + \sqrt{c})^2})
\end{align}
Therefore, we have that 
\begin{align}
    (m \min\{P_{mi}\})^r-m^r a_{rm} = O_p(m^{-r\left((\sqrt{\gamma(1-\rho)} + \sqrt{c})^2 -1\right)}) \to \infty
\end{align}
with probability one, since $\sqrt{c} > 1-\sqrt{\gamma(1-\rho)}$.
Hence we have concluded the proof.




\end{appendices}

\end{document}